\newcommand{\cat}[1]{\ensuremath{\mathbf{#1}}}
\newcommand{\catC}{\cat{C}}
\newcommand{\op}{\ensuremath{^{\mathrm{op}}}}
\newcommand{\id}[1][]{\ensuremath{1_{#1}}}
\newcommand{\restrict}[1]{\ensuremath{|_{#1}}}
\newcommand{\downset}{\ensuremath{\mathop{\downarrow}}}
\newcommand{\psh}[1]{\ensuremath{[\cat{#1}\op,\cat{Set}]}}
\newcommand{\Psh}[1]{\ensuremath{\widehat{#1}}}
\newcommand{\Spsh}[1]{\ensuremath{\Psh{#1}_{\mathrm{brd}}}}
\newcommand{\FUsh}[1]{\ensuremath{\Psh{#1}_{\mathrm{fin}}}}
\newcommand{\DUsh}[1]{\ensuremath{\Psh{#1}_{\mathrm{dir}}}}
\newcommand{\broad}[2]{\ensuremath{\langle #1, #2\rangle }}
\newcommand{\ootimes}{\ensuremath{\mathop{\widehat{\otimes}}}}
\newcommand{\inprod}[2]{\langle #1 \mid #2 \rangle}
\newcommand{\HModc}{\ensuremath{\cat{Hilb}_{C_0(X)}}} 
\newcommand{\cQuant}{\ensuremath{\cat{cQuant}}}
\newcommand{\mors}[1]{\ensuremath{{#1}^{\preceq}}} 
\newcommand{\suppinit}{\supp_0} 
\newcommand{\qunit}{e} 
\DeclareMathOperator{\ISub}{ISub}
\DeclareMathOperator{\Sub}{Sub}
\DeclareMathOperator{\Simple}{Simple}
\DeclareMathOperator{\supp}{supp}
\DeclareMathOperator{\colim}{colim}
\newenvironment{pic}[1][]
{\begin{aligned}\begin{tikzpicture}[#1]}
{\end{tikzpicture}\end{aligned}}
\newcommand{\appref}{\ref{sec:day}}
\journal{Journal of Pure and Applied Mathematics}
\newcommand{\acknowledgements}[1]{}
\theoremstyle{plain}
\newtheorem{theorem}{Theorem}[section]
\newtheorem{proposition}[theorem]{Proposition}
\newtheorem{corollary}[theorem]{Corollary}
\newtheorem{lemma}[theorem]{Lemma}
\theoremstyle{definition}
\newtheorem{definition}[theorem]{Definition}
\newtheorem{example}[theorem]{Example}
\newtheorem{remark}[theorem]{Remark}
\begin{document}
\begin{frontmatter}
  \title{Tensor topology}
  \author{Pau Enrique Moliner}
  \ead{pau.enrique.moliner@ed.ac.uk}
  \author{Chris Heunen\fnref{epsrc}}
  \ead{chris.heunen@ed.ac.uk}  
  \author{Sean Tull\fnref{epsrc}}
  \ead{sean.tull@cs.ox.ac.uk}
  \address{University of Edinburgh}
  \fntext[epsrc]{Supported by EPSRC Research Fellowship EP/L002388/2. 
    We thank 
    Richard Garner,
    Marino Gran,
    Joachim Kock,
    Tim van der Linden,
    Bob Par{\'e},
    John Power,
    Pedro Resende,
    Manny Reyes,
    Phil Scott, and
    Isar Stubbe for helpful discussions.
    }
  \begin{abstract}
  A subunit in a monoidal category is a subobject of the monoidal unit for which a canonical morphism is invertible. 
  They correspond to open subsets of a base topological space in categories such as those of sheaves or Hilbert modules. 
  We show that under mild conditions subunits endow any monoidal category with a kind of topological intuition: there are well-behaved notions of restriction, localisation, and support, even though the subunits in general only form a semilattice. We develop universal constructions completing any monoidal category to one whose subunits universally form a lattice, preframe, or frame. 
  \end{abstract}
  \begin{keyword}
	Monoidal category \sep Subunit \sep Idempotent \sep Semilattice \sep Frame
  \MSC[2010]{18D10, 06D22}
  \end{keyword}
\end{frontmatter}

\section{Introduction}

Categorical approaches have been very successful in bringing topological ideas into other areas of mathematics. A major example is the category of sheaves over a topological space, from which the open sets of the space can be reconstructed as subobjects of the terminal object. More generally, in any topos such subobjects form a frame. Frames are lattices with properties capturing the behaviour of the open sets of a space, and form the basis of the constructive theory of pointfree topology~\cite{johnstone:stonespaces}. 

In this article we study this inherent notion of space in categories more general than those with cartesian products.
Specifically, we argue that a semblance of this topological intuition remains in categories with mere tensor products. 
For the special case of toposes this exposes topological aspects in a different direction than considered previously.
The aim of this article is to lay foundations for this (ambitiously titled) `tensor topology'.

Boyarchenko and Drinfeld~\cite{boyarchenkodrinfeld:idempotent,boyarchenkodrinfeld:duality} have already shown how to equate the open sets of a space with certain morphisms in its monoidal category of sheaves of vector spaces. This forms the basis for our approach. We focus on certain subobjects of the tensor unit in a (braided) monoidal category that we call \emph{subunits}, fitting  other treatments of tensor units~\cite{hines:coherenceselfsimilarity,kock:saavedra,fioreleinster:thompsonsgroupf}. 

For subunits to behave well one requires only that monomorphisms and tensor products interact well; we call a category \emph{firm} when it does so for subunits and \emph{stiff} when it does so globally, after~\cite{quillen:nonunitalrings}. In a firm category subunits always form a (meet-)semilattice. They may have further features, such as having joins that interact with the category through universal properties, and in the strongest case form a frame. We axiomatise such {\emph{locale-based}} 
categories. Aside from toposes, major noncartesian examples are categories of Hilbert modules, with subunits indeed given by open subsets of the base space. More generally, we show how to complete any stiff category {to a locale-based one}.

There are at least two further perspectives on this study.
First, it is analogous to tensor triangular geometry~\cite{balmer:tensortriangulargeometry}, a programme with many applications including algebraic geometry, stable homotopy theory, modular representation theory, and symplectic geometry~\cite{balmer:spectrum,balmerfavi:telescope,balmerkrausestevenson:smashing}. Disregarding triangulations and direct sums, we show that the natural home for many arguments is mere monoidal categories~\cite{hogancamp:idempotent}. We will also not require our categories to be cocomplete~\cite{brandenburg}. 

Second, just as Grothendieck toposes may be regarded as a categorification of frames~\cite{street:topos}, our results may be regarded as categorifying the study of central idempotents in a ring. 
Our algebraic examples include categories of firm nondegenerate modules over a firm nonunital commutative ring, or more generally, over a nonunital bialgebra in a braided monoidal category.

\subsubsection*{Structure of article}

We set out the basics of subunits in Section~\ref{sec:subunits}, showing that they form a semilattice in any firm category.
Section~\ref{sec:examples} introduces our main examples: sheaves, Hilbert modules, modules over a ring, and order-theoretic examples including commutative quantales, generalising frames~\cite{resende:groupoidquantales}. 

In Section~\ref{sec:restriction} we introduce the notion of a morphism `restricting to' a subunit, and show how to turn any subunit into a unit of its restricted category. These \emph{restriction} functors together are seen to form a graded monad. We also show that subunits correspond to certain ideal subcategories and to certain comonads; although these give equivalent descriptions, we stick with subunits for the rest of the article to stay as close as possible to the theory of sheaves.
Section~\ref{sec:simplicity} then proves that restriction forms a localisation of our category, and more broadly that one may localise to a category with only trivial subunits. 

Section~\ref{sec:support} introduces the notion of \emph{support} of a morphism, derived from the collection of subunits to which it restricts. This notion seems unrelated to earlier definitions requiring more structure~\cite{joyal:chevalleytarski,kockpitsch:pointfree}.

In Sections~\ref{sec:spatial} and~\ref{sec:universaljoins} we characterise categories, such as toposes and categories of Hilbert modules, whose subunits come with suprema satisfying universal properties and so form a lattice, preframe, or frame; the latter being {locale-based} categories. Finally, Sections~\ref{sec:broad} and~\ref{sec:completion} show how to complete a given monoidal category to one with each kind of universal joins, including a {locale-based} category, in a universal way. This involves passing to certain presheaves, that we will call \emph{broad}, under Day convolution, as detailed in \appref; but this completion is not a sheafification for any Grothendieck topology.

\subsubsection*{Further directions}

This foundation opens various directions for further investigation. The {locale-based} completion of a stiff category is a stepping stone to future structure theorems for monoidal categories in the spirit of Giraud's theorem~\cite[C2.2.8]{johnstone:elephant}. We therefore also leave to future work more precise connections between tensor topology and topos theory, although the reader might find useful the analogy between objects of a monoidal category and (pre)sheaves.

Applications to linear logic and computer science, as proposed in~\cite{enriquemolinerheunentull:space}, remain to be explored, including amending the graphical calculus for monoidal categories~\cite{selinger:graphicallanguages} with spatial information.
It would be interesting to examine what happens to subunits under constructions such as Kleisli categories, Chu spaces, or the Int-construction~\cite{joyalstreetverity:traced}. One could ask how much of the theory carries over to skew monoidal categories~\cite{szlachanyi:skew}, as topology encompasses more than locale theory and one may be interested in `noncommutative' topologies.
Similarly, one could investigate how these notions relate to partiality and restriction categories~\cite{grandis:cohesive}.
Finally, it would be desirable to find global conditions on a category providing its subunits with further properties, such as being a compact frame or Boolean algebra, or with further structure, such as being a metric space.

\acknowledgements{We thank 
    Robin Cockett,
    Richard Garner,
    Marino Gran,
    Joachim Kock,
    Tim van der Linden,
    Bob Par{\'e},
    John Power,
    Pedro Resende,
    Manny Reyes,
    Phil Scott, and
    Isar Stubbe for helpful discussions.
}

\section{Subunits}\label{sec:subunits}

We work with braided monoidal categories~\cite{maclane:categorieswork}, and will sometimes suppress the coherence isomorphisms $\lambda_A \colon I \otimes A \to A$, $\rho_A \colon A \otimes I \to A$, $\alpha_{A,B,C} \colon A \otimes (B \otimes C) \to (A \otimes B) \otimes C$, and $\sigma_{A,B} \colon A \otimes B \to B \otimes A$, and often abbreviate identity morphisms $\id[A] \colon A \to A$ simply by $A$.

Recall that a subobject of an object $A$ is an equivalence class of monomorphisms $s \colon S \rightarrowtail A$, where $s$ and $s'$ are identified if they factor through each other. Whenever we talk about a subobject, we will use a small letter $s$ for a representing monomorphism, and the corresponding capital $S$ for its domain.

\begin{definition}\label{def:subunit}
  A \emph{subunit} in a braided monoidal category $\cat{C}$ is a subobject $s \colon S \rightarrowtail I$ of the tensor unit such that $s \otimes S \colon S \otimes S \to I \otimes S$ is an isomorphism\footnote{
Boyarchenko and Drinfeld call morphisms $s \colon S \to I$ for which $s \otimes S$ and $S \otimes s$ are isomorphisms \emph{open idempotents}~\cite{boyarchenkodrinfeld:idempotent}, with (the dual of) this notion going back implicitly at least to~\cite[Exercise~4.2]{kashiwara2005categories}. In~\cite{enriquemolinerheunentull:space} subunits were called \emph{idempotent subunits}.}.
Write $\ISub(\cat{C})$ for the collection of subunits in $\cat{C}$.
\end{definition}

Note that, because $s$ is monic, if $s \otimes S$ is invertible then so is $S \otimes s$.

\begin{remark}\label{rem:centrality}
  We could have generalised the previous definition to arbitrary monoidal categories by additionally requiring subunits to be central in the sense that there is a natural isomorphism $(-) \otimes S \Rightarrow S \otimes (-)$. 
  Most results below still hold, but the bureaucracy is not worth the added generality here. 

  Many results also remain valid when we require $s \otimes S$ not to be invertible but merely split epic, but for simplicity we stick with invertibility.
\end{remark}

We begin with some useful observations, mostly adapted from Boyarchenko and Drinfeld~\cite{boyarchenkodrinfeld:idempotent}.

\begin{lemma}\label{lem:subunitsretract} 
  Let $m \colon A \to B$ and $e \colon B \to A$ satisfy $e \circ m = A$, and $s \colon S \rightarrowtail I$ be a subunit.
  If $s \otimes B$ is an isomorphism, then so is $s \otimes A$.
\end{lemma}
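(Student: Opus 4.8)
The statement says: if $m\colon A\to B$ and $e\colon B\to A$ with $e\circ m = 1_A$ (so $A$ is a retract of $B$), and $s\otimes B$ is invertible, then $s\otimes A$ is invertible. The natural approach is to realise $A\otimes S$ as a retract of $B\otimes S$ in a way compatible with the morphisms $s\otimes A$ and $s\otimes B$, and then observe that a retract of an isomorphism, sitting inside a commuting square of the right shape, is again an isomorphism.

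Concretely, I would first tensor the retraction $e\circ m = 1_A$ on the right by $S$, giving $(e\otimes S)\circ(m\otimes S) = 1_{A\otimes S}$, so $A\otimes S$ is a retract of $B\otimes S$ via $m\otimes S$ and $e\otimes S$. Likewise $I\otimes A$ is a retract of $I\otimes B$ via $I\otimes m$ and $I\otimes e$ (or, suppressing $\lambda$, $A$ is a retract of $B$ as already given). The key point is naturality of $-\otimes S$ — or rather of the whole tensor — which yields the commuting square
\begin{equation*}
  (I\otimes m)\circ (s\otimes A) = (s\otimes B)\circ(m\otimes S),
\end{equation*}
and similarly with $e$ in place of $m$. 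So I have a commuting ladder whose rungs are $s\otimes A$ and $s\otimes B$, whose top and bottom composites are identities, and whose middle vertical $s\otimes B$ is invertible by hypothesis.

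From this ladder the conclusion is pure diagram chasing: define a candidate inverse of $s\otimes A$ by $(A\otimes s^{-1}$-style formula$)$, namely $g := (e\otimes S)\circ (s\otimes B)^{-1}\circ (I\otimes m)$ (modulo the coherence isos $\lambda$ identifying $I\otimes A$ with $A$ and $I\otimes S$ with $S$). Then $g\circ(s\otimes A) = (e\otimes S)\circ(s\otimes B)^{-1}\circ(I\otimes m)\circ(s\otimes A) = (e\otimes S)\circ(s\otimes B)^{-1}\circ(s\otimes B)\circ(m\otimes S) = (e\otimes S)\circ(m\otimes S) = 1_{A\otimes S}$, using the commuting square and then the retraction. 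This gives a one-sided inverse; for the other side one uses that $s$ is monic, exactly as in the remark following Definition~\ref{def:subunit}: once $(s\otimes A)\circ g\circ(s\otimes A) = s\otimes A$ and $s\otimes A$ (equivalently $S\otimes s$, after a braiding) is split monic because $s$ is monic, one concludes $(s\otimes A)\circ g = 1$. Alternatively, and perhaps more cleanly, one checks directly that $(s\otimes A)\circ g$ and $1_{I\otimes A}$ become equal after postcomposing with the monic $s\otimes A$-type map, then cancel.

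**Main obstacle.** The only real care needed is bookkeeping with the coherence isomorphisms $\lambda$, $\rho$ and the braiding $\sigma$: the two expressions ``$s\otimes S$ invertible'' and ``$S\otimes s$ invertible'' differ by a braiding, and $I\otimes A$ versus $A$ differ by $\lambda_A$, so the candidate inverse $g$ must be written with these inserted in the right places for the cancellations to typecheck. This is routine by Mac Lane coherence but is where a careless proof would slip. No genuinely hard idea is required — the content is entirely that ``a retract of an isomorphism compatible with a pair of sections/retractions is an isomorphism,'' specialised to the functor $-\otimes S$ and the natural transformation $s\otimes(-)$.
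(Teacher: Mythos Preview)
Your approach is essentially the paper's: set up the bifunctoriality ladder with rows $S\otimes m,\,S\otimes e$ and $I\otimes m,\,I\otimes e$, rungs $s\otimes A,\,s\otimes B,\,s\otimes A$, and take the candidate inverse $g=(S\otimes e)\circ(s\otimes B)^{-1}\circ(I\otimes m)$. Your verification of $g\circ(s\otimes A)=1$ is exactly right.

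Where you go astray is the other composite. The argument ``$(s\otimes A)\circ g\circ(s\otimes A)=s\otimes A$ and now cancel using that $s$ is monic'' does not work: to cancel $s\otimes A$ on the \emph{right} you would need it to be epic, not monic, and nothing gives you that. (Knowing $s$ is monic does not even make $s\otimes A$ monic in general; that is precisely the firmness issue.) The fix is the step you already set up but then abandoned: use the \emph{second} naturality square, with $e$ in place of $m$. Directly,
\[
(s\otimes A)\circ g
=(s\otimes A)\circ(S\otimes e)\circ(s\otimes B)^{-1}\circ(I\otimes m)
=(I\otimes e)\circ(s\otimes B)\circ(s\otimes B)^{-1}\circ(I\otimes m)
=1_{I\otimes A},
\]
using $(s\otimes A)\circ(S\otimes e)=(I\otimes e)\circ(s\otimes B)$. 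This is exactly the paper's proof, and note it never uses that $s$ is monic. (Also watch the side of your tensors: you mix $e\otimes S$ with $I\otimes m$; pick one convention throughout.)
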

\begin{proof}
  The diagram below commutes by bifunctoriality of $\otimes$.
  \[\begin{tikzcd}[column sep=3cm]
    S \otimes A \rar{S \otimes m} \dar{s \otimes A}
    & S \otimes B \rar{S \otimes e} \arrow{d}{s \otimes B}[swap]{\simeq}
    & S \otimes A \dar{s \otimes A} \\
    I \otimes A \rar{I \otimes m}
    & I \otimes B \rar{I \otimes e}
    & I \otimes A
  \end{tikzcd}\]
  Both rows compose to the identity, and the middle vertical arrow is an isomorphism.
  Hence $s \otimes A$ is an isomorphism with inverse $(S \otimes e) \circ (s \otimes B)^{-1} \circ (I \otimes m)$.
\end{proof}

Recall that subobjects of a fixed object always form a partially ordered set, where $s \leq t$ if and only if $s$ factors through $t$. The following observations characterises this order in another way for subunits.

\begin{lemma}\label{lem:subunitsorder}
  A subunit $s$ factors through another $t$ if and only if $S \otimes t$ is invertible, or equivalently, $t \otimes S$ is invertible.
\end{lemma}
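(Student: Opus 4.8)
The plan is to prove both directions of the biconditional, using Lemma~\ref{lem:subunitsretract} for the forward implication and a direct diagram chase for the converse. Throughout I will freely use the remark after Definition~\ref{def:subunit}: since $s$ and $t$ are monic, $S \otimes t$ is invertible iff $t \otimes S$ is, so it suffices to treat one of the two.

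For the forward direction, suppose $s$ factors through $t$, say $s = t \circ m$ for some $m \colon S \to T$. Since $t$ is monic, $m$ is monic as well; in fact $m$ is itself split epic when tensored suitably, but more simply: because $t \colon T \rightarrowtail I$ is a subunit, $t \otimes T$ is an isomorphism, hence (applying the functor $(-)\otimes T$ to the factorisation, or directly) I want to deduce that $s \otimes T$ is an isomorphism too. The cleanest route is to observe that $s = t \circ m$ gives $s \otimes T = (t \otimes T) \circ (m \otimes T)$, and I need $m \otimes T$ invertible. For this I will use that $m$ is split monic: indeed, $\rho_T \circ (m \otimes T) \colon S \otimes T \to T$ followed by the isomorphism $(t\otimes S)^{-1}$-type maps... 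Actually the slick argument is to apply Lemma~\ref{lem:subunitsretract} with the roles set up so that $s \otimes (S\otimes T)$ or rather $s \otimes T$ being an isomorphism follows from $s \otimes S$ being one: since $m \colon S \to T$ with $t \circ m = s$ and $t$ monic, and since $t \otimes T$ is iso, one shows $T$ is a retract of $S$ after tensoring appropriately, then invokes the lemma. Concretely, $S \otimes t \colon S \otimes T \to S \otimes I$ composed with $\rho$ and with $(s\otimes S)^{-1}$-machinery exhibits $S\otimes T \cong S \otimes S$, and chasing shows $s \otimes T$ is invertible, hence so is $T \otimes s$ and thus $t \otimes S$... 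I will need to be careful here that I am proving the right one of "$S \otimes t$ invertible" vs "$T \otimes s$ invertible"; the statement claims $S \otimes t$ (equivalently $t \otimes S$) is invertible.

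For the converse, suppose $S \otimes t$ (equivalently $t \otimes S$) is invertible. I want to produce a factorisation $s = t \circ m$. The natural candidate for $m \colon S \to T$ is built from the inverse of $t \otimes S$ together with the coherence isomorphisms: roughly, $m = \rho_T \circ (T \otimes s) \circ \sigma \circ (t \otimes S)^{-1} \circ \sigma \circ \lambda_S^{-1}$, or more transparently, since $S \otimes t \colon S \otimes T \to S \otimes I \cong S$ is invertible, we get a map $S \cong S \otimes S \xrightarrow{(S \otimes t)^{-1}} S \otimes T \xrightarrow{\rho} T$-style composite, where the first iso is $s \otimes S$ or $\lambda_S^{-1}$-and-$s$. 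Then one checks $t \circ m = s$ by a diagram chase, using bifunctoriality and the defining isomorphism property of the subunit $s$ (that $s \otimes S$ is iso) to cancel the appropriate terms. I expect this verification to hinge on the identity $s \circ \lambda_S = \lambda_I \circ (s \otimes S) = \rho_I \circ (S \otimes s)$-type naturality squares for $\lambda, \rho$, together with one application of monicity of $s$ to conclude.

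The main obstacle I anticipate is bookkeeping: getting the braidings and the left/right unitors placed correctly so that "$S \otimes t$ invertible" rather than some cousin of it appears, and ensuring the candidate $m$ in the converse genuinely satisfies $t \circ m = s$ rather than $t \circ m = s$ only after tensoring with $S$ (one then needs monicity of something, or the subunit property, to descend). I would organise the converse as: first show $(t \circ m) \otimes S = s \otimes S$ using the explicit formula for $m$ and bifunctoriality, then cancel the isomorphism $(-)\otimes S$ — but $(-)\otimes S$ is not faithful in general, so instead I would note both $t\circ m$ and $s$ are morphisms into $I$ and use that $s\otimes S$ iso plus a retraction argument (essentially Lemma~\ref{lem:subunitsretract} again, or the triangle identities) forces equality. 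Modulo this care, both directions are short diagram chases.
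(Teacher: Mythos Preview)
Your proposal has a genuine gap in the forward direction and an unnecessary complication in the converse.

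\textbf{Forward direction.} You repeatedly slide between $S \otimes t$ and $s \otimes T$, and your concrete steps (``$s \otimes T = (t \otimes T) \circ (m \otimes T)$'', ``chasing shows $s \otimes T$ is invertible'') aim at the wrong morphism. The map $s \otimes T \colon S \otimes T \to I \otimes T$ is \emph{not} generally invertible when $s \leq t$: invertibility would force $S \otimes T \simeq T$, whereas the lemma says $S \otimes T \simeq S$. So this line cannot succeed. What the paper does instead is build an explicit one-sided inverse to $\rho_S \circ (S \otimes t)$, namely
\[
  g = (S \otimes f) \circ (S \otimes s)^{-1} \circ \rho_S^{-1} \colon S \to S \otimes T,
\]
exhibiting $S$ as a retract of $S \otimes T$. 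Since $(S \otimes T) \otimes t$ is invertible (because $T \otimes t$ is), Lemma~\ref{lem:subunitsretract} (in its right-handed form) then gives that $S \otimes t$ is invertible. Your sketch gestures at Lemma~\ref{lem:subunitsretract} but never sets up the retract correctly or identifies which subunit to feed into it.

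\textbf{Converse.} Your plan is to define $m$, show $(t \circ m) \otimes S = s \otimes S$, and then worry about descending along $(-) \otimes S$. This last step is a real obstacle in your approach, as you note, and your proposed fix (``retraction argument \ldots forces equality'') is not spelled out. The paper avoids the issue entirely: from the naturality square
\[
  t \circ \rho_T \circ (s \otimes T) = \rho_I \circ (s \otimes I) \circ (S \otimes t) = s \circ \rho_S \circ (S \otimes t)
\]
one reads off directly that $s = t \circ \big(\rho_T \circ (s \otimes T) \circ (S \otimes t)^{-1} \circ \rho_S^{-1}\big)$, with no cancellation or faithfulness argument needed.
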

\begin{proof}
  Suppose $s=t \circ f$.
  Set $g=(S \otimes f) \circ (S \otimes s)^{-1} \circ \rho_S^{-1} \colon S \to S \otimes T$. Then
  \[
    \rho_S \circ (S \otimes t) \circ g
    = \rho_S \circ (S \otimes s) \circ (S \otimes s)^{-1} \circ {\rho_S}^{-1} 
    = S\text.
  \]
  Idempotence of $t$ makes $S \otimes T \otimes t \colon S \otimes T \otimes T \to S \otimes T \otimes I$ an isomorphism. Hence, by the right-handed version of Lemma~\ref{lem:subunitsretract}, so is $S \otimes t$. A symmetric argument makes $t \otimes S$ invertible.

  Conversely, suppose $S \otimes t$ is an isomorphism. Because the diagram
  \[\begin{tikzcd}[column sep=3cm]
    S \otimes T \dar{S \otimes t} \rar{s \otimes T}
    & I \otimes T \dar{I \otimes t} \rar{\rho_T}
    & T \dar{t} \\
    S \otimes I \rar{s \otimes I}
    & I \otimes I \rar{\rho_I}
    & I
  \end{tikzcd}\]
  commutes, the bottom row $s \circ \rho_S$ factors through the right vertical arrow $t$, whence so does $s$.
\end{proof}

It follows from Lemma~\ref{lem:subunitsorder} that subunits are determined by their domain: if $s, s' \colon S \rightarrowtail I$ are subunits, then $s' = s \circ f$ for a unique $f$, which is an isomorphism. This justifies our convention to use the same letter for a subunits and its domain. 

For the theory to work smoothly, we impose a condition on the category. 

\begin{definition}\label{def:firm}
  A category is called \emph{firm} when it is braided monoidal and $s \otimes T \colon S \otimes T \to I \otimes T$ is a monomorphism whenever $s$ and $t$ are subunits.
\end{definition}

\begin{remark}
  The name \emph{firm} is chosen after Quillen~\cite{quillen:nonunitalrings}, who employs it as a natural condition for nonunital rings to make up for a missing unit. The previous definition extends the term to the category of nonunital rings; see {Proposition}~\ref{prop:modules} below. Note, however, that a firm category has genuine identity morphisms and a genuine tensor unit. Firmness is very mild condition: {Proposition}~\ref{prop:PshFirmCounter} below gives a category that is not firm, but we know of no other `naturally occurring' categories that are not firm. 
\end{remark}

\begin{lemma}
  Any co-closed braided monoidal category is firm.
\end{lemma}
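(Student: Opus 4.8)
The plan is to unwind what co-closedness provides and feed it directly into Definition~\ref{def:firm}. Recall that $\catC$ being co-closed means $\catC\op$ is monoidal closed; concretely, for each object $B$ the functor $(-) \otimes B \colon \catC \to \catC$ admits a left adjoint. In particular $(-) \otimes B$ is itself a right adjoint, and therefore preserves all limits that exist in $\catC$.

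The key observation is that monomorphisms are a limit notion: a morphism $m \colon S \to I$ is monic precisely when the square with vertices $S,S,S,I$, two identity legs and two copies of $m$, is a pullback. A limit-preserving functor sends such a pullback to a pullback of the same shape, so every right adjoint — hence in particular every functor of the form $(-) \otimes B$ — preserves monomorphisms. Using the braiding $\sigma$, the functor $B \otimes (-)$ is naturally isomorphic to $(-) \otimes B$, so it too preserves monomorphisms; thus tensoring with a fixed object on either side preserves monos. (If one prefers to avoid the pullback characterisation, the same fact follows by transposing across the adjunction: post-composition with the image of a mono corresponds to post-composition with the mono itself.)

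With this in hand the statement is immediate. Let $s \colon S \rightarrowtail I$ and $t \colon T \rightarrowtail I$ be subunits. Since $s$ is in particular a monomorphism, applying the mono-preserving functor $(-) \otimes T$ shows that $s \otimes T \colon S \otimes T \to I \otimes T$ is a monomorphism, which is exactly the condition required for firmness. Note that this argument uses only that $\catC$ is braided monoidal and co-closed, and not that $t$ is a subunit.

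The only point requiring any care — the "main obstacle", such as it is — is pinning down the intended reading of "co-closed" and the precise adjunction it supplies, namely that $(-) \otimes B$ is a right adjoint; once that and the standard fact that right adjoints preserve monomorphisms are in place, the proof is a one-line application.
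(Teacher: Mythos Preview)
Your proof is correct and follows essentially the same approach as the paper: co-closedness makes each $(-)\otimes T$ a right adjoint, right adjoints preserve limits and hence monomorphisms, so $s\otimes T$ is monic whenever $s$ is. The paper's proof is just a terser version of your argument, omitting the pullback characterisation of monos and the (unneeded) remark about $B\otimes(-)$.
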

\begin{proof}
 Each functor $(-) \otimes T$ is a right adjoint and so preserves limits and hence monomorphisms. Hence whenever $s$ is monic so is $s \otimes T$.
\end{proof}

In particular, a $*$-autonomous category is firm, as is a compact category.

\begin{remark}
  In the following, we will completely disregard size issues, and pretend $\ISub(\cat{C})$ is a set, as in our main examples.
\end{remark}

\begin{proposition}\label{prop:semilattice}
  The subunits in a firm category form a semilattice, with largest element $I$, meets given by
  \[
    \big(s \colon S \rightarrowtail I\big) \wedge \big(t \colon T \rightarrowtail I\big) = \big(\lambda_I \circ (s \otimes t) \colon S \otimes T \rightarrowtail I\big)\text,
  \]
  and the usual order of subobjects.
\end{proposition}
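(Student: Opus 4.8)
The plan is to verify the three parts of the claim in turn: that the proposed formula $\lambda_I \circ (s \otimes t)$ is again a subunit, that it is a lower bound for $s$ and $t$ in the subobject order, and that it is the greatest such lower bound; the top element $I$ is immediate since $I \otimes I \xrightarrow{\id \otimes I} I \otimes I$ is trivially invertible.

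First I would check that $s \wedge t := \lambda_I \circ (s \otimes t) \colon S \otimes T \rightarrowtail I$ is a monomorphism: this is exactly where firmness enters, since $s \otimes t = (s \otimes I) \circ (S \otimes t)$ and firmness gives that $s \otimes T$ is monic, while $S \otimes t$ factors as an iso followed by $s \otimes T$ precomposed appropriately — more cleanly, write $s \otimes t$ as the composite $S \otimes T \xrightarrow{s \otimes T} I \otimes T \xrightarrow{I \otimes t} I \otimes I$; the first map is monic by firmness and the second is monic because $I \otimes (-)$ preserves monos (it is naturally isomorphic to the identity functor via $\lambda$). Then $\lambda_I$ is an iso, so the whole composite is monic.

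Next I would show $s \wedge t$ is a subunit, i.e.\ that $(s \wedge t) \otimes (S \otimes T)$ is invertible. By Lemma~\ref{lem:subunitsorder} it suffices to show $s \wedge t \leq s$ and $s \wedge t \leq t$ in the subobject order, because then $(S\otimes T) \otimes s$ and $(S \otimes T) \otimes t$ are both invertible, and one deduces $(S \otimes T) \otimes S \otimes T \to I \otimes S \otimes T$ is invertible by composing these (tensoring the first with $T$, the second with $S$, using the braiding to reorder, and chaining the resulting isomorphisms). For $s \wedge t \leq s$: the map $s \otimes t$ factors through $s \otimes I$ via $S \otimes t$, and modulo $\rho$/$\lambda$ coherence this exhibits $\lambda_I \circ (s \otimes t)$ as $s$ composed with $\rho_S \circ (S \otimes t) \colon S \otimes T \to S$, so $s \wedge t = s \circ (\text{something})$. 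Symmetrically $s \wedge t \leq t$ using $s \otimes T$. So $s \wedge t$ is a lower bound and, by the argument above, genuinely a subunit.

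Finally I would prove it is the meet: suppose a subunit $r \colon R \rightarrowtail I$ satisfies $r \leq s$ and $r \leq t$. By Lemma~\ref{lem:subunitsorder}, $R \otimes s$ and $R \otimes t$ are both invertible. I want a factorisation of $r$ through $s \wedge t = \lambda_I \circ (s\otimes t)$. The natural candidate is to build a map $R \to S \otimes T$ out of $r$ together with the inverses of $R \otimes s$ and $R \otimes t$: concretely take $R \xrightarrow{\cong} R \otimes R \xrightarrow{\cong} (S \otimes R)$ using $(R \otimes s)^{-1}$-type data, then again using $(R \otimes t)^{-1}$ to get $S \otimes T$-valued, in the spirit of the construction of $g$ in the proof of Lemma~\ref{lem:subunitsorder}; then check it composes with $s \wedge t$ back to $r$, which follows from monicity of $r$ after a diagram chase. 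Uniqueness of the factorisation is automatic since $s \wedge t$ is monic. The main obstacle I anticipate is precisely this last step — assembling the comparison map $R \to S \otimes T$ and checking the coherence bookkeeping so that it really factors $r$ — rather than any conceptual difficulty; the earlier parts are short once firmness is invoked for monicity and Lemma~\ref{lem:subunitsorder} is invoked to convert "factors through" into "tensor is invertible".
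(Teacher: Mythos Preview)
Your overall strategy is sound and is a legitimate alternative to the paper's route, but there is one circular step you should fix.

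\textbf{The circularity.} When arguing that $s \wedge t$ is a subunit, you write: ``By Lemma~\ref{lem:subunitsorder} it suffices to show $s \wedge t \leq s$ and $s \wedge t \leq t$, because then $(S\otimes T) \otimes s$ and $(S \otimes T) \otimes t$ are both invertible.'' But the forward direction of Lemma~\ref{lem:subunitsorder} is stated for a \emph{subunit} factoring through another; to invoke it with $s \wedge t$ in the first slot you would already need to know that $s \wedge t$ is a subunit, which is precisely what you are trying to establish. Fortunately the conclusion you want is immediate without the lemma: $s \otimes S$ is invertible because $s$ is a subunit, hence so is $(s \otimes S) \otimes T$, and the braiding identifies this with $(S\otimes T)\otimes s$; similarly for $t$. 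With that correction your chaining argument goes through (it is exactly the paper's one-line ``easily seen to be idempotent using the braiding'').

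\textbf{Comparison with the paper.} The paper takes a different route after establishing monicity and idempotence: it shows that $(\ISub(\cat{C}),\wedge,I)$ is an idempotent commutative monoid (unit, associativity, commutativity, idempotence), hence a semilattice, and only \emph{then} identifies the induced order $s \leq t \Leftrightarrow s = s \wedge t$ with the subobject order via Lemma~\ref{lem:subunitsorder}. Your route instead verifies the meet property directly in the subobject order. Your approach is closer to the statement and avoids the monoid bookkeeping; the paper's approach is slightly slicker for the greatest-lower-bound step, since once the two orders coincide there is nothing left to prove. In particular, your final step can be shortened: rather than building an explicit map $R \to S \otimes T$, note that $r \leq s$ and $r \leq t$ give (now legitimately, since $r$ is a subunit) that $R \otimes s$ and $R \otimes t$ are invertible, whence $R \otimes (s \otimes t)$ is invertible, and the converse direction of Lemma~\ref{lem:subunitsorder} yields $r \leq s \wedge t$ immediately.
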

\begin{proof}
  First observe that $s \otimes t = (I \otimes t) \circ (s \otimes T)$ is monic, because $I \otimes t = \lambda_I^{-1} \circ t \circ \lambda_T$ is monic, and $s \otimes T$ is monic by firmness.
  It is easily seen to be idempotent using the braiding, and hence it is a well-defined subunit.

  Next, we show that $\ISub(\cat{C})$ is an idempotent commutative monoid under $\wedge$ and $I$.
  The subunit $I$ is a unit as $I \otimes s = \lambda_I \circ (I \otimes s) = s \circ \lambda_S$  represents the same subobject as $s$, and similarly $I \otimes s$ represents the same subobject as $s$ because $\rho_I=\lambda_I$.
  An analogous argument using coherence establishes associativity. For commutativity, use the braiding to observe that $s \otimes t$ and $t \otimes s$ represent the same subobject. For idempotence note that $s \otimes s$ and $s$ represent the same subobject because $\lambda_I \circ (s \otimes s) = s \circ \rho_S \circ (S \otimes s)$.
  
  Hence $\ISub(\cat{C})$ is a semilattice where $s$ is below $t$ if and only if $s = s \wedge t$. Finally, we show that this order is the same as the usual order of subobjects. 
  On the one hand, if $s$ and $s \otimes t$ represent the same subobject, then $S \simeq S \otimes T$, making $S \otimes t$ an isomorphism and so $s \leq t$ by Lemma~\ref{lem:subunitsorder}.
    \[
  \begin{pic}[xscale=2,yscale=1]
    \node (s) at (0,0) {$S$};
    \node (i) at (1,.5) {$I$};
    \node (t) at (0,1) {$T$};
    \draw[>->] (s) to node[below]{$s$} (i);
    \draw[>->] (t) to node[above]{$t$} (i);
    \draw[>->,dashed] (s) to (t);
  \end{pic}
  \qquad \iff \qquad
  \begin{pic}[xscale=2,yscale=1]
    \node (s) at (0,0) {$S$};
    \node (i) at (1,0) {$I$};
    \node (st) at (0,1) {$S \otimes T$};
    \node (ii) at (1,1) {$I \otimes I$};
    \draw[>->] (s) to node[below]{$s$} (i);   
    \draw[>->] (st) to node[above]{$s \otimes t$} (ii);
    \draw[->] (ii) to node[right]{$\lambda_I$} node[left]{$\simeq$} (i);
    \draw[->,dashed] (s) to node[left]{$\simeq$} (st);
  \end{pic}
  \]
  On the other hand, if $s \leq t$ then by the same lemma $S \otimes t$ is an isomorphism with $s = \lambda_I \circ (s \otimes t) \circ (S \otimes t)^{-1} \otimes \rho_S^{-1}$, and so both subobjects are equal. 
\end{proof}

\section{Examples}\label{sec:examples}

This section determines the subunits of four families of examples: 
cartesian categories, like sheaves over a topological space;
commutative unital quantales;
firm modules over a nonunital ring;
and Hilbert modules over a nonunital commutative C*-algebra.

\subsection*{Cartesian categories}

We start with examples in which the tensor product is in fact a product.

{
\begin{proposition}\label{prop:cartesian}
  Any cartesian category $\cat{C}$ is firm, and $\ISub(\cat{C})$ consists of the subobjects of the terminal object.

  In particular, if $X$ is a topological space, then subunits in its category of sheaves $\mathrm{Sh}(X)$ correspond to open subsets of $X$~\cite[Corollary~2.2.16]{borceux:3}.
\end{proposition}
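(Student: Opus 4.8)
The statement has two parts: first, that any cartesian category is firm with $\ISub(\cat{C})$ equal to the poset of subobjects of the terminal object $1$; second, the special case of $\mathrm{Sh}(X)$.

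For the first part, I would argue as follows. In a cartesian category the tensor unit is the terminal object $1$, and for any object $A$ the coherence map $\rho_A \colon A \times 1 \to A$ is an isomorphism (its inverse being $\langle \id[A], {!}_A\rangle$). Now let $s \colon S \rightarrowtail 1$ be \emph{any} subobject of the terminal object. I claim $s$ is automatically a subunit: we must check that $s \times S \colon S \times S \to 1 \times S$ is invertible. Since $1 \times S \cong S$ via $\lambda_S$, this amounts to showing the composite $S \times S \to S$ is an iso; but that composite is the first projection $\pi_1 \colon S \times S \to S$ (up to coherence), and $\pi_1$ has a section given by the diagonal $\Delta_S = \langle \id[S], \id[S]\rangle$. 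In fact $\pi_1$ is an isomorphism here precisely because $S \rightarrowtail 1$ being monic forces the two projections $S \times S \to S$ to be equal (both are the unique map to $1$ postcomposed appropriately — more carefully, monic into the terminal object means $S \times S \cong S$ with both projections equal to $\id[S]$ under this iso). So every subobject of $1$ is a subunit. Conversely every subunit is by definition a subobject of the unit $= 1$, so $\ISub(\cat{C})$ is exactly $\Sub(1)$ as a poset (the order on subunits agrees with the subobject order by Proposition~\ref{prop:semilattice}, or directly). For firmness: given subunits $s, t$, we need $s \times T \colon S \times T \to 1 \times T$ monic. But $s \times T = (s \times \id[T])$ and products of monos with identities are monic in any category with products, since $(-) \times T$ preserves monos (it is a functor into a category where we can test monos pointwise via the projections, or simply: if $(s\times T)f = (s\times T)g$ then composing with $\pi_2$ gives $\pi_2 f = \pi_2 g$ and composing with $\pi_1$ and using $s$ monic gives $\pi_1 f = \pi_1 g$, hence $f = g$). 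This completes the first part.

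For the second part, I would simply invoke that $\mathrm{Sh}(X)$ is a topos, hence cartesian closed and in particular cartesian, so the first part applies and $\ISub(\mathrm{Sh}(X)) \cong \Sub(1)$ in $\mathrm{Sh}(X)$; then cite \cite[Corollary~2.2.16]{borceux:3} (or the standard fact that in $\mathrm{Sh}(X)$ the subobjects of the terminal sheaf are precisely the open subsets of $X$, ordered by inclusion) to conclude that subunits correspond to open subsets of $X$.

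The only genuinely delicate point is the claim that $s \times S \colon S \times S \to 1 \times S$ is an isomorphism for an arbitrary subobject $s$ of the terminal object — i.e. that $S$ being "subterminal" makes the diagonal $\Delta_S$ an isomorphism. I expect this to be the main thing to get right: the cleanest justification is that $s$ monic means any two parallel maps into $S$ that become equal after postcomposing with $s$ are already equal, and both $\pi_1, \pi_2 \colon S\times S \to S$ postcompose with $s$ to the same map $S \times S \to 1$ (uniqueness of maps to the terminal object), so $\pi_1 = \pi_2$; combined with the universal property this forces $\Delta_S \circ \pi_1 = \id[S\times S]$, so $\Delta_S$ is inverse to $\pi_1$, and $s \times S = (s \times \id[S])$ is then an iso because it is $\lambda_S^{-1} \circ \pi_1$ up to the coherence isomorphism $\rho$, all of whose constituents are invertible.
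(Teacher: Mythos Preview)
Your proposal is correct and follows essentially the same approach as the paper: both arguments hinge on the observation that for a subterminal $s \colon S \rightarrowtail 1$, any two parallel maps into $S$ are equal (since postcomposing with $s$ lands in the terminal object), which forces $\pi_1 = \pi_2 \colon S \times S \to S$ and hence makes $\Delta_S$ a two-sided inverse to the projection; and both prove firmness by the componentwise check that $s \times T$ is monic via the projections. The paper's writeup packages the inverse as $\Delta \circ \pi_2$ rather than $\Delta \circ \pi_1$, but since the projections coincide this is the same map.
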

}
\begin{proof}
  Let $s \colon S \rightarrowtail 1$ be a subterminal object.
  Let $\Delta =  \langle S, S \rangle \colon S \to S \times S$ be the diagonal and write $\pi_i \colon A_1 \times A_2 \to A_i$ for the projections.
  Then $(s \times S) \circ \Delta \circ \pi_2 = \pi_2^{-1} \circ S \circ \pi_2 = 1 \times S$. Now, the unique map $s$ of type $S \to 1$ is monic precisely when any two parallel morphisms into $S$ are equal.
  Hence $\pi_i \circ \Delta \circ \pi_2 \circ (s \times S) = \pi_i$, and so $\Delta \circ \pi_2 \circ (s \times S) = \langle \pi_1,\pi_2 \rangle = S \times S$.
  Thus $s \times S$ is automatically invertible.

  Finally, suppose $s_i \colon S_i \rightarrowtail 1$ for $i=1,2$ are monic, and that $f,g \colon A \to S_1 \times S_2$ satisfy $(s_1 \times s_2) \circ f = (s_1 \times s_2) \circ g$. Postcomposing with $\pi_i$ shows that $s_i \circ \pi_i \circ f = s_i \circ \pi_i \circ g$, whence $\pi_i \circ f = \pi_i \circ g$ and so $f=g$. This establishes firmness.
\end{proof}

\subsection*{Semilattices}

Next we consider examples that are degenerate in another sense: firm categories in which there is at most one morphism between two given objects.

{
\begin{example}\label{ex:semilattice}
  Any semilattice $(L, \wedge, 1)$ forms a strict symmetric monoidal category: objects are $x \in L$, there is a unique morphism $x \to y$ if $x \leq y$, tensor product is given by meet, and tensor unit is $I = 1$. Every morphism is monic so this monoidal category is firm, and its (idempotent) subunits are $(L, \wedge, 1)$.
\end{example}

  This gives the free firm category on a semilattice. More precisely, this construction is left adjoint to the functor from the category $\cat{Firm}$ of firm categories with (strong) monoidal subunit-preserving functors to the category $\cat{SLat}$ of semilattices and their homomorphisms, which takes subunits.
  \[\begin{pic}
  	\node (l) at (0,0) {$\cat{SLat}$};
  	\node (r) at (4,0) {$\cat{Firm}$};
  	\draw[->] ([yshift=2mm]l.east) to ([yshift=2mm]r.west);
  	\draw[draw=none] (l.east) to node{$\perp$} (r.west);
  	\draw[<-] ([yshift=-2mm]l.east) to node[below]{$\ISub$} ([yshift=-2mm]r.west);
  \end{pic}\]
}

\subsection*{Quantales}

We move on to more interesting examples, namely special kinds of semilattices like frames and quantales.

\begin{definition}\label{def:frame}
  A \emph{frame} is a complete lattice in which finite joins distribute over suprema.
  A morphism of frames is a function that preserves $\bigvee$, $\wedge$, and $1$.
  Frames and their morphisms form a category $\cat{Frame}$.
\end{definition}

The prototypical example of a frame is the collection of open sets of a topological space~\cite{johnstone:stonespaces}. Frames may be generalised as follows~\cite{rosenthal:quantales}.

\begin{definition}\label{def:quantale}
  A \emph{quantale} is a monoid in the category of complete lattices. 
  More precisely, it is a partially ordered set $Q$ that has all suprema, that has a multiplication $Q \times Q \to Q$, and that has an element $\qunit$, such that:
  \[
    a \big(\bigvee b_i\big) = \bigvee a b_i, \qquad
    \big(\bigvee a_i\big) b = \bigvee a_i b, \qquad
    a \qunit = a = \qunit a.
  \]
  A morphism of quantales is a function that preserves $\bigvee$, $\cdot$, and $\qunit$.
  A quantale is \emph{commutative} when $ab = ba$ for all $a, b \in Q$. Commutative  quantales and their morphisms {form a monoidal category} $\cQuant$.
\end{definition}

Equivalently, a frame is a commutative quantale in which the multiplication is idempotent {and whose unit is the largest element}. 

Any quantale may be regarded as a monoidal category, whose objects are elements of the quantale, where the (composition of) morphisms is induced by the partial order, and the tensor product is induced by the multiplication. This monoidal category is firm, but only braided if the quantale is commutative. 

{
\begin{proposition}\label{ex:quantale}
  Taking subunits is right adjoint to the inclusion:
  \[\begin{pic}
  	\node (l) at (0,0) {$\cat{Frame}$};
  	\node (r) at (4,0) {$\cQuant$};
  	\draw[->] ([yshift=2mm]l.east) to ([yshift=2mm]r.west);
  	\draw[draw=none] (l.east) to node{$\perp$} (r.west);
  	\draw[<-] ([yshift=-2mm]l.east) to node[below]{$\ISub$} ([yshift=-2mm]r.west);
  	\node (L) at (-1,-1) {$\{ q \in Q \mid q^2=q \leq \qunit \}$};
  	\node (R) at (3.4,-1) {$Q$};
  	\draw[|->] (R.west) to (L.east);
  \end{pic}\]
\end{proposition}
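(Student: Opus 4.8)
The plan is to identify $\ISub(Q)$ explicitly, recognise it as a frame, and then produce the adjunction by hand from the universal property of the inclusions $\ISub(Q) \hookrightarrow Q$.

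First I would work out $\ISub(Q)$ for a commutative quantale $Q$ viewed as a braided monoidal category. Since this category is a poset, a subobject of the unit $\qunit$ is just an element $q \leq \qunit$ (two representatives coincide as subobjects precisely when they are equal). For such $q$ one has $q^2 = q\cdot q \leq q\cdot \qunit = q$ automatically and $\qunit \otimes q = \qunit\cdot q = q$, so the subunit condition---that $q\otimes q \to \qunit\otimes q$ be invertible---amounts precisely to $q^2 = q$. Hence $\ISub(Q) = \{q \in Q \mid q^2 = q \leq \qunit\}$, as the statement asserts.

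Next I would check that $\ISub(Q)$ is a frame. By Proposition~\ref{prop:semilattice} it is already a meet-semilattice with top $\qunit$ and meets computed by $\otimes$, that is, $q \wedge q' = q q'$, ordered by the restriction of $\leq$. The crux is that $\ISub(Q)$ is closed under the suprema of $Q$: given subunits $q_i$, clearly $\bigvee_i q_i \leq \qunit$, and $\bigl(\bigvee_i q_i\bigr)\bigl(\bigvee_j q_j\bigr) = \bigvee_{i,j} q_i q_j$ lies below $\bigvee_k q_k$ because $q_i q_j \leq \qunit\, q_j = q_j$, and above it because $q_i = q_i^2$; so $\bigvee_i q_i$ is again a subunit. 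Thus $\ISub(Q)$ is a complete lattice whose suprema are those of $Q$, and the quantale law $q\bigl(\bigvee_i q_i\bigr) = \bigvee_i q q_i$ is exactly the frame distributivity law $q\wedge\bigvee_i q_i = \bigvee_i (q\wedge q_i)$; since the multiplicative unit $\qunit$ is moreover the top element, $\ISub(Q)$ is a frame.

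For functoriality, a morphism $f\colon Q\to R$ of commutative quantales preserves $\bigvee$, $\cdot$ and $\qunit$, so it sends subunits to subunits---$f(q)\leq f(\qunit) = \qunit$ and $f(q)^2 = f(q^2) = f(q)$---and its restriction $\ISub(Q)\to\ISub(R)$ preserves $\bigvee$, $\wedge = \cdot$ and $1 = \qunit$, hence is a frame morphism; this makes $\ISub$ a functor. Symmetrically, each inclusion $\ISub(Q)\hookrightarrow Q$ is a quantale morphism: it preserves suprema and multiplication by the previous paragraph, and it preserves units because the top element of the frame $\ISub(Q)$ is $\qunit$.

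It then remains to verify the adjunction. Given a frame $F$ regarded as a commutative quantale via the inclusion functor, every element $a\in F$ satisfies $a\leq 1_F$ and $a^2 = a$, since a frame has idempotent multiplication whose unit is the top element. Consequently any quantale morphism $g\colon F\to Q$ automatically takes values in $\ISub(Q)$, and its corestriction $F \to \ISub(Q)$ is a frame morphism; conversely, composing any frame morphism $F\to\ISub(Q)$ with the inclusion $\ISub(Q)\hookrightarrow Q$ recovers a quantale morphism $F \to Q$. These two assignments are mutually inverse and evidently natural in $F$ and $Q$, exhibiting the inclusion $\cat{Frame}\hookrightarrow\cQuant$ as left adjoint to $\ISub$, with the inclusions $\ISub(Q)\hookrightarrow Q$ as counit. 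The only steps needing genuine care are the closure of $\ISub(Q)$ under the suprema of $Q$ and the fact that this counit is a quantale morphism (in particular that it preserves the unit); the rest is routine manipulation of the quantale axioms.
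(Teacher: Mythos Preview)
Your proof is correct and follows essentially the same approach as the paper: establish that $\ISub(Q)$ is closed under suprema in $Q$ (via the same chain of inequalities), observe that meets are given by multiplication so that distributivity follows from the quantale law, and then obtain the adjunction from the fact that any quantale morphism out of a frame $F$ lands in $\ISub(Q)$ because $F=\ISub(F)$. You spell out more detail than the paper does---in particular functoriality of $\ISub$ and that the counit is a quantale morphism---but the underlying argument is the same.
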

}
\begin{proof}
  We first prove that $\ISub(Q)$ is a well-defined frame.
  If $q_i \in \ISub(Q)$, 
  \[
    (\bigvee q_i)^2 = \bigvee_{i,j} q_i q_j \leq \bigvee_{i,j} q_i \qunit = \bigvee_i q_i
    = \bigvee_i q_i q_i \leq \bigvee_{i,j} q_i q_j =(\bigvee q_i)^2
  \]
  and $\bigvee q_i \leq \bigvee_i \qunit = \qunit$, so $\bigvee q_i \in \ISub(Q)$.
  Moreover, if $p,q \in \ISub(Q)$, then $pq$ is again below $\qunit$ and is idempotent by commutativity of $Q$. 
  {It follows that $pq=p \wedge q$ in $\ISub(Q)$.}
  Since quantale multiplication distributes over suprema, so do finite meets.
  
  For the adjunction, observe that if $F$ is a frame and $Q$ is a commutative quantale, then $F= \ISub(F)$ and any morphism $F \to Q$ of quantales restricts to a unique morphism of frames $F \to \ISub(Q)$. 
\end{proof}

\begin{remark}
  {Propositions}~\ref{prop:cartesian} and~\ref{ex:quantale} show that subunits do not capture all possible topological {content} in the traditional sense. For a Grothendieck topos they form the poset of internal truth values, which does not suffice to reconstruct the category, which may itself be said to embody a notion of topological space. 

  For the commutative quantale, $[0,\infty]$ under multiplication and the usual order, the subunits form the two-element Boolean algebra, which is clearly far poorer than the quantale itself.
\end{remark}

\begin{example}
  If $M$ is a monoid, then its (right) ideals form a unital quantale $Q$ with multiplication $IJ=\{xy \mid x \in I,y \in J\}$ and unit $M$ itself. When $M$ is commutative, so is $Q$, and $\ISub(Q)$ consists of all ideals satisfying $I=II$. 
\end{example}

{
\begin{example}
  If $R$ is a commutative ring, then its additive subgroups form a unital commutative quantale $Q$ with multiplication $GH=\{x_1y_1+\cdots+x_ny_n\mid x_i \in G, y_i \in H\}$, supremum $\bigvee G_i = \{\sum_{j \in J} x_j \mid x_j \in G_j \text{ for } J \subseteq I \text{ finite}\}$, and unit $\mathbb{Z}1 = \{ 0, 1, -1, 1+1, -1-1, 1+1+1, -1-1-1, \ldots\}$.
  Then $G \leq H$ iff $G \subseteq H$ and $\ISub(Q)$ consists of those subgroups $G$ such that $G \subseteq G \cdot G$ and $G \subseteq \mathbb{Z}1$. 
  The latter means that $G$ must be of the form $n\mathbb{Z}1$ for some $n \in \mathbb{N}$.
  The former then means that $n1=n^2y1$ for some $y \in \mathbb{Z}$.
  Thus $\ISub(Q) = \{n\mathbb{Z}1 \mid n \in \mathbb{N}, \exists y \in \mathbb{Z} \colon n1=n^2y1\}$.
\end{example}
}

\subsection*{Modules} 

Another example of a monoidal category is that of modules over a ring. We have to take some pains to treat nonunital rings.

{
\begin{definition}\label{def:firmring}
  A commutative ring $R$ is \emph{idempotent} when $R$ equals $R^2=\{\sum_{i=1}^n r_i' r_i'' \mid r_i',r_i'' \in R\}$, \emph{firm} when its multiplication is a bijection $R \otimes_R R \to R$, and \emph{nondegenerate} when $r \in R$ vanishes as soon as $rs=0$ for all $s \in R$.
\end{definition}

Any unital ring is firm and nondegenerate, but examples also include infinite direct sums $\bigoplus_{n \in \mathbb{N}} R_n$ of unital rings $R_n$. Firm rings $R$ are idempotent.

\begin{definition}\label{def:firmmodule}
  Let $R$ be a nondegenerate firm commutative ring. An $R$-module $E$ is \emph{firm} when the scalar multiplication is a bijection $E \otimes_R R \to E$~\cite{quillen:nonunitalrings}, and \emph{nondegenerate} when $x \in E$ vanishes as soon as $xr=0$ for all $r \in R$.
  Nondegenerate firm $R$-modules and linear maps form a monoidal category $\cat{FMod}_R$.
\end{definition}

If $R$ is unital, then every unital $R$-module is firm and nondegenerate.}

{
\begin{proposition}\label{prop:modules}
  The subunits in $\cat{FMod}_R$ correspond to \emph{nondegenerate firm idempotent ideals}: ideals $S \subseteq R$ that are idempotent as rings, and nondegenerate and firm as $R$-modules. 
  Any ideal that is unital as a ring is a nondegenerate firm idempotent ideal. 
  The category $\cat{FMod}_R$ is firm.
\end{proposition}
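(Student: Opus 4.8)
The plan is to prove the three claims of Proposition~\ref{prop:modules} in turn: (i) identify subunits in $\cat{FMod}_R$ with nondegenerate firm idempotent ideals, (ii) show that unital ideals are of this form, and (iii) establish firmness of $\cat{FMod}_R$.

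For (i), first I would pin down what a subobject of the tensor unit is in $\cat{FMod}_R$. The tensor unit is $R$ itself. A monomorphism $s\colon S\rightarrowtail R$ of firm nondegenerate modules is in particular an injective linear map, and since $S$ is firm the composite $S\otimes_R R\to S\to R$ identifies $S$ with its image; so up to the subobject equivalence we may take $S\subseteq R$ to be an $R$-submodule, i.e.\ an ideal, that happens to be nondegenerate and firm as a module. The subunit condition is that $s\otimes S\colon S\otimes_R S\to R\otimes_R S$ be an isomorphism. Since $R\otimes_R S\cong S$ by firmness of $S$, and the map $S\otimes_R S\to S$ induced by the inclusion is exactly the multiplication map of the ring $S$, the subunit condition becomes: the multiplication $S\otimes_R S\to S$ is a bijection. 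This splits as surjectivity, which (given that $S\otimes_R S$ surjects onto $S^2$) says $S=S^2$, i.e.\ $S$ is idempotent as a ring; together with injectivity it says precisely that $S$ is firm as an $R$-module via its own multiplication — and here I should check the compatibility of "firm over $R$" with "firm over $S$", using that $S$ is an ideal so $S\otimes_R S\cong S\otimes_S S$ for firm modules. I expect a short diagram chase, reminiscent of Lemma~\ref{lem:subunitsretract}, to handle the two directions cleanly. Conversely, given a nondegenerate firm idempotent ideal $S$, the inclusion $s\colon S\rightarrowtail R$ is monic (it is injective and a map of the relevant modules), and the computation just described shows $s\otimes S$ is invertible, so $s$ is a subunit.

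Claim (ii) is the easy part: if an ideal $S\subseteq R$ is unital as a ring, with unit $u$, then the map $x\mapsto xu$ splits the multiplication $S\otimes_R S\to S$ and also witnesses $S\otimes_R S\cong S$, giving firmness; $S=S^2$ is immediate from $x=xu\in S^2$; and nondegeneracy over $R$ follows because $xr=0$ for all $r\in R$ forces $x=xu=0$ since $u\in S\subseteq R$.

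Claim (iii), firmness of $\cat{FMod}_R$, is the part I expect to be the main obstacle, since it requires controlling tensor products of two arbitrary subunits rather than a single one. Given subunits $s\colon S\rightarrowtail R$ and $t\colon T\rightarrowtail R$ — so $S,T$ are nondegenerate firm idempotent ideals — I must show $s\otimes T\colon S\otimes_R T\to R\otimes_R T\cong T$ is a monomorphism, i.e.\ injective. The map sends a generator $x\otimes y$ to $xy\in T$ (using that $S,T$ are ideals). The strategy is to produce an explicit retraction or to argue injectivity directly: because $T$ is firm, $T\cong T\otimes_R T$, and because $S$ is idempotent and firm one can rewrite $S\otimes_R T$ using $S\otimes_R S\cong S$; the point will be that $S\otimes_R T\to T$ factors, after tensoring with the firmness isomorphisms, through an isomorphism much as in Proposition~\ref{prop:semilattice}'s proof that $s\otimes t$ is monic. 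Concretely I would show that the composite $S\otimes_R T\xrightarrow{s\otimes T} T$ has the same kernel as the multiplication $S\otimes_R T\to S\otimes_R T\otimes_R T$ followed by firmness of $T$ on the last two factors, and that this latter map is split monic by firmness of $S$ applied in the form $S\otimes_R(T\otimes_R T)\cong S\otimes_R T$. Nondegeneracy of $T$ over $R$ is what lets one cancel in the final step. Once $s\otimes T$ is shown monic for all subunits $s,t$, firmness of $\cat{FMod}_R$ follows by definition.
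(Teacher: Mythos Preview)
Your outline for (i) and (ii) matches the paper's approach closely. One point of confusion in (i): you write that injectivity of the multiplication $S\otimes_R S\to S$ ``says precisely that $S$ is firm as an $R$-module via its own multiplication'' and then worry about reconciling firmness over $R$ with firmness over $S$. But firmness of $S$ as an $R$-module is the statement that $S\otimes_R R\to S$ is bijective, which is a different map; it is already part of the hypothesis that $S$ lies in $\cat{FMod}_R$, so there is nothing to reconcile, and the paper never passes to $S\otimes_S S$. The paper's converse in (i) is just the one line $r\otimes s = r\otimes s's'' = rs'\otimes s''$ showing that $S\otimes_R S\to R\otimes_R S$ is surjective. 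Part (ii) is exactly your argument with $u=1_S$.

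For (iii) the paper's argument is much shorter than you anticipate. It simply observes that since $T$ is firm one has $R\otimes_R T\cong T$, so injectivity of $s\otimes T$ amounts to injectivity of the multiplication map $S\otimes_R T\to S$ (the image lands in $S$ because $S$ is an ideal), and then asserts this holds because $S$ is firm. Your proposed route has several problems. First, the isomorphism ``$T\cong T\otimes_R T$'' you invoke is the subunit property of $T$, not firmness of $T$; you are conflating the two throughout. Second, the appeal to the proof of Proposition~\ref{prop:semilattice} is circular: that argument already assumes the ambient category is firm, which is precisely what you are trying to establish. Third, your concrete chain ``$S\otimes_R T\to S\otimes_R T\otimes_R T$ followed by firmness of $T$ on the last two factors'' appears to compose an isomorphism with its inverse, yielding the identity on $S\otimes_R T$, so no injectivity information is extracted. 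If you want to follow the paper, the whole of (iii) is the single reduction to the multiplication map using firmness of $T$ and then of $S$.
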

}
\begin{proof}
  Monomorphisms are injective by nondegeneracy, so every subunit is a nondegenerate firm $R$-submodule of $R$, that is, a nondegenerate firm ideal. Because the inclusion $S \otimes S \to R \otimes S$ is surjective and $S$ is firm, the map $S \otimes S \to S$ given by $s' \otimes s''\mapsto s's''$ is surjective. Thus $S$ is idempotent.
  
  Conversely, let $S$ be a nondegenerate firm idempotent ideal of $R$. The inclusion $S \otimes S \to R \otimes S$ is surjective, as $r \otimes s \in R \otimes S$ can be written as $r \otimes s's'' = r s' \otimes s'' \in S \otimes S$. Hence $S$ is a subunit.
  
  Next suppose ideal $S$ is unital (with generally $1_S \neq 1_R$ if $R$ is unital).
  Then $S \otimes R \to S$ given by $s \otimes r \mapsto sr$ is bijective: surjective as $1_S \otimes s \mapsto 1_S s = s$; and injective as $s \otimes r = 1_S \otimes sr = 1_S \otimes 0 = 0$ if $sr=0$.
  Hence $S$ is firm and nondegenerate.
  Any $s \in S$ can be written as $s=s 1_S \in S^2$, so $S$ is idempotent.

  Finally, to see that the category is firm, let $S,T \subseteq R$ be nondegenerate firm idempotent ideals.
  We need to show that the map $S \otimes T \mapsto R \otimes T$ given by $s \otimes t \mapsto s \otimes t$ is injective.
  Because $T$ is firm, it suffices that multiplication $S \otimes T \to S$ given by $s \otimes t \mapsto st$ is injective, which holds because $S$ is firm.
\end{proof}

The previous proposition generalises to commutative nonunital bialgebras in any symmetric monoidal category.

\begin{example}\label{ex:modulesoverbialgebra}
 Let $\cat{C}$ be a symmetric monoidal category.
  A \emph{commutative nonunital bialgebra} in $\cat{C}$ is an object $M$ together with an associative multiplication $\mu \colon M \otimes M \to M$ and a comonoid $\delta \colon M \to M \otimes M$, $\varepsilon \colon M \to I$, for which $\mu$ and $\delta$ are commutative and satisfy both $ \varepsilon \circ \mu = \varepsilon \otimes \varepsilon$ and the bialgebra law:
\[
    (\mu \otimes \mu) \circ (M \otimes \sigma \otimes M) \circ (\delta \otimes \delta)  = \delta \circ \mu 
\]
We define a braided monoidal category $\cat{Mod}_M$ where objects are $\alpha \colon M \otimes A \to A$ satisfying $\alpha \circ (\mu \otimes A) = \alpha \circ (M \otimes \alpha)$, with morphisms and $\otimes$ all defined as for modules over a (unital) commutative bialgebra (see e.g.~\cite[2.2,2.3]{hasegawa2010bialgebras}). 
The category $\cat{Mod}_M$ is firm when $\cat{C}$ is, and its subunits correspond to \emph{firm ideals}:
  monomorphisms $s \colon S \rightarrowtail M$ such that
  \[\begin{tikzpicture}[xscale=3]
  	\node (tl) at (0,1) {$M \otimes S$};
  	\node (tr) at (1,1) {$M \otimes M$};
  	\node (bl) at (0,0) {$S$};
  	\node (br) at (1,0) {$M$};
  	\draw[->] (tl) to node[above]{$M \otimes s$} (tr);
  	\draw[->] (tr) to node[right]{$\mu$} (br);
  	\draw[>->] (bl) to node[below]{$s$} (br);
  	\draw[->,dashed] (tl) to (bl);
  \end{tikzpicture}\]
  and $\varepsilon \otimes S$ and $s \otimes S$ are isomorphisms.
\end{example}

We next instantiate the previous example in two special cases: in the monoidal categories of semilattices and of quantales.

\begin{example}
  Any semilattice $M$ is a nondegenerate nonunital bialgebra in $\cat{SLat}$. In $\cat{Mod}_M$ objects are semilattices $A$ with functions $\alpha \colon M \times A \to A$ which respect $\wedge$ in each argument and satisfy $\alpha(x \wedge y,a)=\alpha(x,\alpha(y,a))$. 
  Subobjects of the tensor unit correspond to subsets $S \subseteq M$ which are ideals under $\wedge$, or equivalently downward-closed. Because $x \otimes y = (x \wedge x) \otimes y = x \otimes (x \wedge y) \in S \otimes S$, we have $S \otimes S = S \otimes M$, and every subobject of the tensor unit is a subunit.
\end{example}

\begin{example}
  Any commutative unital quantale $M$ is a nondegenerate nonunital bialgebra in the category of complete lattices. $\cat{Mod}_M$ then consists of complete lattices $A$ with functions $\alpha \colon M \times A \to A$ preserving arbitrary suprema in each argument and with $\alpha(x, \alpha(y,a)) = \alpha(xy, a)$.
  Subobjects of the tensor unit are {submodules} $S \subseteq M$. 
  Subunits furthermore have that for every $r \in S$ and $x \in M$ there exist $s_i,t_i \in S$ with $r \otimes x = \bigvee s_i \otimes t_i$.
  For example, if $M=[0,\infty]$ under addition with the opposite ordering, subunits include $\emptyset,\{\infty\}$, $\{0,\infty\}$, $(0,\infty]$, and $[0,\infty]$.
\end{example}

\subsection*{Hilbert modules}

The above examples of module categories were all algebraic in nature. Our next suite of examples is more analytic. 

\begin{definition}\label{def:hilbertmodule}
  Fix a locally compact Hausdorff space $X$. It induces a commutative C*-algebra 
  \[
    C_0(X)=\{f \colon X \to \mathbb{C} \text{ continuous} \mid \forall \varepsilon>0\; \exists K \subseteq X \text{ compact} \colon |f(X \setminus K)|<\varepsilon \}\text.
  \]
  A \emph{Hilbert module} is a $C_0(X)$-module $A$ with a map $\inprod{-}{-} \colon A \times A \to C_0(X)$ that is $C_0(X)$-linear in the second variable, satisfies $\inprod{a}{b}=\inprod{b}{a}^*$, and $\inprod{a}{a}\geq 0$ with equality only if $a=0$, and makes $A$ complete in the norm $\|a\|^2_A = \sup_{x \in X} \inprod{a}{a}(x)$. A function $f \colon A \to B$ between Hilbert $C_0(X)$-modules is \emph{bounded} when $\|f(a)\|_{{A}} \leq {C}\|a\|_A$ for some {constant $C \in \mathbb{R}$; the infimum of such constants is written $\|f\|$}. Here we will focus on {\emph{nonexpansive maps}}, i.e.\ those bounded functions with $\|f\| \leq 1$.
\end{definition}

Hilbert modules were first introduced by Kaplansky~\cite{kaplansky:modules} and studied by many others, including Rieffel~\cite{rieffel:representations} {and} Kasparov~\cite{kasparov:hilbertmodules}. 
For more information we refer to~\cite{lance:hilbertmodules}.

The category $\HModc$ of Hilbert $C_0(X)$-modules and {nonexpansive} $C_0(X)$-linear maps is not abelian, not complete, and not cocomplete~\cite{heunen:embedding}. Nevertheless, $\HModc$ is 
symmetric monoidal~\cite[Proposition~2.2]{heunenreyes:frobenius}. Here $A \otimes B$ is constructed as follows: consider the algebraic tensor product of $C_0(X)$-modules, and complete it to a Hilbert module with inner product $\inprod{a \otimes b}{a' \otimes b'}$ given by $\inprod{a}{a'} \inprod{b}{b'}$. The tensor unit is $C_0(X)$ itself, which forms a Hilbert $C_0(X)$-module under the inner product $\inprod{f}{g}(x) = f(x)^* g(x)$.

{
\begin{proposition}\label{prop:hilbertmodules}
$\HModc$ is firm, and its subunits are 
  \begin{equation}\label{eq:subunithilbertmodules}
    \{ f \in C_0(X) \mid f(X \setminus U)=0 \} \simeq C_0(U)
  \end{equation}
  for open subsets $U \subseteq X$. 
\end{proposition}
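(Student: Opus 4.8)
The plan is to prove the statement in two halves: first that every subunit arises as in~\eqref{eq:subunithilbertmodules}, and second that $\HModc$ is firm. For the first half, suppose $s \colon S \rightarrowtail C_0(X)$ is a subunit. Since the inner product detects zero, every monomorphism of Hilbert modules is injective, so $S$ may be regarded as a $C_0(X)$-submodule of $C_0(X)$, i.e.\ a (nonexpansively embedded, closed) ideal of the C*-algebra $C_0(X)$. By the standard Gelfand-type correspondence for commutative C*-algebras, every closed ideal of $C_0(X)$ is of the form $\{f \in C_0(X) \mid f(X \setminus U)=0\} \cong C_0(U)$ for a unique open $U \subseteq X$; I would cite this and identify $S$ with such an ideal. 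It then remains to check that the subunit condition $s \otimes S \colon S \otimes S \to C_0(X) \otimes S$ being an isomorphism holds \emph{automatically} for any such ideal, and conversely imposes no restriction. For this I would use that $S = C_0(U)$ has an \emph{approximate unit} $(u_\lambda)$ (a bounded net in $S$ with $u_\lambda f \to f$ for all $f \in S$); the map $S \otimes S \to S$, $a \otimes b \mapsto ab$, then has the dense-image/inverse-on-a-core behaviour needed to see that $S$ is firm as a module over itself, so $S \otimes S \to I \otimes S$ is an isomorphism. Concretely, tensoring over $C_0(X)$, $S \otimes_{C_0(X)} S$ and $C_0(X) \otimes_{C_0(X)} S \cong S$ are both computed via the same inner product, and the approximate unit witnesses that the algebraic map is isometric with dense range, hence extends to the required isomorphism after completion. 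So the subunits are exactly the closed ideals, which are exactly the $C_0(U)$.

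For firmness, I must show that if $s \colon S \rightarrowtail C_0(X)$ and $t \colon T \rightarrowtail C_0(X)$ are subunits then $s \otimes T \colon S \otimes T \to C_0(X) \otimes T$ is monic, equivalently injective. Writing $S = C_0(U)$ and $T = C_0(V)$, the module tensor product $S \otimes T$ is the completion of $S \otimes_{C_0(X)} T$ in the inner product $\inprod{a \otimes b}{a' \otimes b'} = \inprod{a}{a'}\inprod{b}{b'}$, and similarly for $C_0(X) \otimes T$. The map $s \otimes T$ sends $a \otimes b \mapsto a \otimes b$ and is nonexpansive; to see it is injective it suffices to see it is \emph{isometric} on the algebraic tensor product and then pass to completions. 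One computes $\inprod{a \otimes b}{a \otimes b}$ in $S \otimes T$ and in $C_0(X) \otimes T$ and observes these agree since the inner product formula is the same; the only subtlety is that passing to the completion could in principle identify more elements on the source side, but isometry of the algebraic map rules this out. Alternatively, and perhaps more cleanly, I would note $S \otimes T \cong C_0(U) \otimes_{C_0(X)} C_0(V) \cong C_0(U \cap V)$ (again via approximate units, since $C_0(U)\,C_0(V) = C_0(U \cap V)$ is dense), and under this identification $s \otimes T$ becomes the inclusion $C_0(U \cap V) \hookrightarrow C_0(V)$, which is manifestly injective. This simultaneously recovers Proposition~\ref{prop:semilattice}'s meet formula: $C_0(U) \wedge C_0(V) = C_0(U \cap V)$, matching intersection of opens.

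The main obstacle I anticipate is the careful bookkeeping around \emph{completions versus algebraic tensor products}: all the clean algebraic identities ($C_0(U)\,C_0(V) = C_0(U\cap V)$, $S\otimes_S S \cong S$) hold only densely/approximately in the C*-setting, so at each step one must argue that the relevant algebraic map is isometric (so that it extends to an isomorphism, resp.\ remains injective, on completions) rather than merely a dense-range homomorphism. The key technical tool throughout is the existence of an approximate unit for the ideal $C_0(U)$, together with the fact that the Hilbert-module inner product (hence norm) is computed by the \emph{same} formula before and after the embedding $S \otimes T \to C_0(X) \otimes T$, which is what forces isometry. Once this is set up, identifying subunits with open subsets of $X$ is essentially the Gelfand correspondence for ideals of $C_0(X)$.
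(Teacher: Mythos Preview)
Your treatment of the direction ``closed ideal $\Rightarrow$ subunit'' and of firmness is essentially the paper's: the paper identifies $A \otimes C_0(U)$ with the submodule $A|_U = \{a \in A \mid \inprod{a}{a} \in C_0(U)\}$ (this is Proposition~\ref{prop:hilbertmodules:localadjunction}), which is exactly your $C_0(U)\otimes C_0(V) \cong C_0(U\cap V)$ argument, and then firmness is immediate. Your use of approximate units to see that $C_0(U)\otimes C_0(U)\to C_0(U)$ is an isomorphism is a fine alternative to the paper's one-line observation that $C_0(U)$ is itself the tensor unit of $\cat{Hilb}_{C_0(U)}$.

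There is, however, a genuine gap in the converse direction. You write that a subunit $s\colon S\rightarrowtail C_0(X)$ may be regarded as ``a (nonexpansively embedded, closed) ideal'', but the word \emph{closed} is doing all the work and you have not justified it. A bounded injective $C_0(X)$-linear map into $C_0(X)$ gives an ideal, but its image need not be closed in the $C_0(X)$-norm: the paper gives an explicit example immediately after this proposition of a subobject of $C_0([0,1])$ whose image is \emph{not} closed. So the subunit condition is not vacuous on this side; it is precisely what forces closedness. The paper's argument uses the isomorphism $\rho_S \colon S\otimes S \to S$ to run a norm estimate
\[
\|g\|_S^4 = \|g\otimes g\|_{S\otimes S}^2 \leq \|\rho_S^{-1}\|^2\,\|g^2\|_S^2 \leq \|\rho_S^{-1}\|^2\,\|g\|_S^2\,\|g\|_{C_0(X)}^2,
\]
yielding $\|g\|_S \leq \|\rho_S^{-1}\|\,\|g\|_{C_0(X)}$. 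Together with boundedness of $s$ this makes the two norms equivalent, so $s(S)$ is complete in the $C_0(X)$-norm and hence closed. Without this step (or an equivalent use of idempotence) your identification of $S$ with some $C_0(U)$ is unjustified.
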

}
\begin{proof}
  If $U$ is an open subset of $X$, we may indeed identify $C_0(U)$ with the closed ideal of $C_0(X)$ in~\eqref{eq:subunithilbertmodules}: if $f \in C_0(U)$, then its extension by zero on $X \setminus U$ is in $C_0(X)$, and conversely, if $f \in C_0(X)$ is zero outside $U$, then its restriction to $U$ is in $C_0(U)$.  
  Moreover, note that the canonical map $C_0(X) \otimes C_0(X) \to C_0(X)$ is always an isomorphism as $C_0(X)$ is the tensor unit, and hence the same holds for $C_0(U)$.
  Thus $C_0(U)$ is a subunit in $\HModc$.

  For the converse, let $s \colon S \rightarrowtail C_0(X)$ be a subunit in $\HModc$. We will show that $s(S)$ is a closed ideal in $C_0(X)$, and therefore of the form $C_0(U)$ for some open subset $U \subseteq X$.
  It is an ideal because $s$ is $C_0(X)$-linear.
  To see that it is closed, let $g \in s(S)$. Then
  \begin{align*}
    \|g\|_S^4    
    & = \| \inprod{g}{g}_S^2 \|_{C_0(X)}
      = \| \inprod{g}{g}_S \inprod{g}{g}_S\|_{C_0(X)} \\
    & = \| \inprod{g \otimes g}{g \otimes g}_{C_0(X)} \|_{C_0(X)}
     = \|g \otimes g\|_S^2 \\
    & \leq \|\rho_S^{-1}\|^2 \|g^2\|_S 
    = \|\rho_S^{-1}\|^2 \| \inprod{g}{g}_S g^* g \|_{C_0(X)} \\
    & \leq \|\rho_S^{-1}\|^2 \|g\|_S^2 \|g\|_{C_0(X)}^2
  \end{align*}
  and therefore $\|g\|_S \leq \| \rho_S^{-1}\|^2 \|g\|_{C_0(X)}^2$.
  Because $s$ is bounded, it is thus an equivalence of normed spaces between $(S,\|-\|_S)$ and $(s(S), \|-\|_{C_0(X)})$. Since the former is complete, so is the latter. Firmness follows from {Proposition}~\ref{prop:hilbertmodules:localadjunction} later.
\end{proof}

The category $\HModc$ can be adapted to form a dagger category by considering (not necessarily {nonexpansive}) bounded maps between Hilbert modules to that are \emph{adjointable}. In that case only clopen subsets of $X$ correspond to subunits~\cite[Lemma~3.3]{heunenreyes:frobenius}.

Another way to view a Hilbert $C_0(X)$-module is as a \emph{field of Hilbert spaces} over $X$. Intuitively, this assigns to each $x \in X$ a Hilbert space, that `varies continuously' with $x$. In particular, for each $x \in X$ there is a monoidal functor $\HModc \to \cat{Hilb}_{\mathbb{C}}$. For details, see~\cite{heunenreyes:frobenius}. This perspective may be useful in reading Section~\ref{sec:restriction} later. 

Not every subobject of the tensor unit in $\HModc$ is induced by an open subset $U \subseteq X$, and so the condition of Definition~\ref{def:subunit} is not redundant.

{
\begin{proposition}
  Let $X=[0,1]$. If $f \in C_0(X)$, write $\hat{f} \in C_0(X)$ for the map $x \mapsto xf(x)$.
  Then $S=\{ \hat{f} \mid f \in A\}$ is a subobject of $A=C_0(X)$ in $\HModc$ under $\inprod{\hat{f}}{\hat{g}}_S = \inprod{f}{g}_A$, that is not closed under $\|-\|_A$.
\end{proposition}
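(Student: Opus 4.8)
The plan is to realise $S$ as the image of an injective $C_0(X)$-linear endomorphism of $A$ carrying a \emph{transported} Hilbert module structure, and then to treat separately (i) that this makes $S$ an object of $\HModc$ with a monic nonexpansive map into $A$, and (ii) that the underlying subset of $A$ is proper yet dense, hence not closed.

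First I would consider $\phi \colon A \to A$, $f \mapsto \hat{f}$. It is plainly $C_0(X)$-linear, and it is injective: if $x f(x) = 0$ for all $x \in [0,1]$ then $f$ vanishes on $(0,1]$, hence everywhere by continuity. So $\phi$ restricts to a bijection $A \to S$, which I would use to transport structure. The module action $\hat{f} \cdot h := \widehat{fh}$ agrees with the pointwise action $S$ inherits from $A$, since $x f(x) h(x) = \hat{f}(x)\, h(x)$; and $\langle \hat{f} \mid \hat{g}\rangle_S := \langle f \mid g\rangle_A$ is well defined precisely because $\phi$ is injective. Conjugate-symmetry, $C_0(X)$-linearity in the second argument, and positive-definiteness of $\langle - \mid - \rangle_S$ are then immediate from the corresponding properties of $\langle - \mid - \rangle_A$, and since $\|\hat{f}\|_S = \|f\|_A$ the normed space $(S, \|-\|_S)$ is complete because $(A, \|-\|_A)$ is. Thus $S$ is an object of $\HModc$, in fact isometrically isomorphic to $A$ via $\phi$.

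Next I would check that the set inclusion $s \colon S \to A$, $\hat{f} \mapsto \hat{f}$, is a morphism of $\HModc$: it is $C_0(X)$-linear because the two module actions on $S$ coincide, and nonexpansive since $\|s(\hat{f})\|_A = \sup_{x \in [0,1]} |x f(x)| \leq \sup_{x \in [0,1]} |f(x)| = \|f\|_A = \|\hat{f}\|_S$. Being injective as a function, $s$ is monic, so $S$ is a genuine subobject of $A$. The one point worth emphasising is that the inner product on $S$ is \emph{not} the restriction of the one on $A$ --- indeed $\langle \hat{f} \mid \hat{g}\rangle_A(x) = x^2\,\overline{f(x)}\,g(x)$ while $\langle \hat{f} \mid \hat{g}\rangle_S(x) = \overline{f(x)}\,g(x)$ --- so $s$ need not, and does not, preserve inner products; this mismatch is exactly what lets $S$ have the `wrong size' while still embedding monically into $A$.

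Finally, for non-closedness I would note that $g \in S$ if and only if $g(0) = 0$ and $g(x)/x$ extends continuously to $x = 0$. The function $\sqrt{\,\cdot\,} \in C([0,1]) = A$ therefore does not lie in $S$, as $x^{-1/2}$ is unbounded near $0$; yet it lies in the $\|-\|_A$-closure of $S$. Concretely, let $g_n$ agree with $\sqrt{x}$ on $[1/n,1]$ and be linear on $[0,1/n]$ with $g_n(0)=0$; then $g_n \in S$ (it is linear near $0$, hence of the form $\hat{f}$ there), and $\|g_n - \sqrt{\,\cdot\,}\|_A \leq 1/\sqrt{n} \to 0$. Hence $S$ is not closed in $A$ under $\|-\|_A$. (Incidentally this reconfirms that $S$ is not a subunit, since by Proposition~\ref{prop:hilbertmodules} the subunits of $\HModc$ have the form $C_0(U)$ and so are closed.) All the computations here are routine; the only things calling for thought are recognising that the inner product must be transported rather than restricted, and choosing the approximating sequence $g_n$.
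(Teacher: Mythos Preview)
Your proof is correct and follows essentially the same approach as the paper: transport the Hilbert module structure along the bijection $f \mapsto \hat f$, verify the inclusion is a nonexpansive monomorphism, and exhibit $\sqrt{\,\cdot\,}$ as a point of the closure not in $S$. The only cosmetic difference is that the paper identifies the closure of $S$ abstractly as the closed ideal $\{f \mid f(0)=0\}$ via the classification of closed ideals in $C([0,1])$, whereas you construct an explicit approximating sequence $g_n$; both establish non-closedness equally well.
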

}
\begin{proof}
  Clearly $S$ is a $C_0(X)$-module, and $\inprod{-}{-}_S$ is sesquilinear.
  Moreover $S$ is complete: $\hat{f_n}$ is a Cauchy sequence in $S$
  if and only if $f_n$ is a Cauchy sequence in $A$, in which case it converges in $A$ to some $f$, and so $\hat{f_n}$ converges to $\hat{f}$ in $S$.
  Thus $S$ is a well-defined Hilbert module.
  The inclusion $S \hookrightarrow A$ is bounded and injective, and hence a well-defined monomorphism.
  In fact, $A$ is a C*-algebra, and $S$ is an ideal. The closure of $S$ in $A$ is the closed ideal $\{f \in C_0(X) \mid f(0)=0\}$, corresponding to the closed subset $\{0\} \subseteq X$. It contains the function $x \mapsto \sqrt{x}$ while $S$ does not, and so $S$ is not closed. 
\end{proof}

\section{Restriction}\label{sec:restriction}

Regarding subunits as open subsets of an (imagined) base space, the idea of restriction to such an open subset makes sense. For example, if $U$ is an open subset of a locally compact Hausdorff space $X$, then any $C_0(X)$-module induces a $C_0(U)$-module, and any sheaf over $X$ induces a sheaf over $U$. More generally, any subunit in a topos induces an open subtopos.
This section shows that this restriction behaves well in any monoidal category.

\begin{definition}\label{def:supportin}
  A morphism $f \colon A \to B$ \emph{restricts to} a subunit $s \colon S \to I$ when it factors through $\lambda_B \circ (s \otimes B)$. 
  \[\begin{tikzpicture}[xscale=3,yscale=1.25]
    \node (tl) at (0,1) {$A$};
    \node (tr) at (1,1) {$B$};
    \node (bl) at (0,0) {$S \otimes B$};
    \node (br) at (1,0) {$I \otimes B$};
    \draw[->] (tl) to node[above]{$f$} (tr);
    \draw[->,dashed] (tl) to (bl);
    \draw[->] (bl) to node[below]{$s \otimes B$} (br);
    \draw[->] (br) to node[right]{$\lambda_B$} (tr);
  \end{tikzpicture}\]
\end{definition}

As a special case, we can consider to which subunits identity morphisms restrict~\cite[Lemma~1.3]{boyarchenkodrinfeld:idempotents}.

\begin{proposition}\label{prop:supportofobject}
  The following are equivalent for an object $A$ and subunit $s$:
  \begin{enumerate}
    \item[(a)] $s \otimes A \colon S \otimes A \to I \otimes A$ is an isomorphism;
    \item[(b)] there is an isomorphism $S \otimes A \simeq A$; 
    \item[(c)] there is an isomorphism $S \otimes B \simeq A$ for some object $B$;
    \item[(d)] the identity $A \to A$ restricts to $s$.
  \end{enumerate}
\end{proposition}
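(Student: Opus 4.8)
The plan is to prove the cycle of implications $(a) \Rightarrow (b) \Rightarrow (c) \Rightarrow (d) \Rightarrow (a)$, using Lemma~\ref{lem:subunitsretract} for the one step that is not essentially formal.

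First, $(a) \Rightarrow (b)$ is immediate: if $s \otimes A$ is an isomorphism then $S \otimes A \simeq I \otimes A \simeq A$ via $\lambda_A$. The implication $(b) \Rightarrow (c)$ is trivial, taking $B = A$. For $(c) \Rightarrow (d)$, suppose $\varphi \colon S \otimes B \to A$ is an isomorphism for some $B$. Then I would exhibit a factorisation of $\id[A]$ through $\lambda_A \circ (s \otimes A)$ by precomposing with the map built from $\varphi$ and the idempotence of $s$: concretely, using that $S \otimes s \colon S \otimes S \to S \otimes I$ is invertible, one gets that $S \otimes (S \otimes B) \simeq S \otimes A$ in a way compatible with $s \otimes (-)$, so $s \otimes A$ is split (in fact invertible) and $\id[A]$ factors through it. Here the cleanest route is probably to first note $(c)$ already forces $s \otimes A$ invertible — which is really the content of $(c) \Rightarrow (a)$ — and then deduce $(d)$ directly; so I may reorganise the cycle as $(a) \Leftrightarrow (b)$, $(a) \Rightarrow (d) \Rightarrow (c) \Rightarrow (a)$, whichever keeps the bookkeeping lightest.

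The step $(d) \Rightarrow (a)$ is where Lemma~\ref{lem:subunitsretract} does the work. If $\id[A]$ restricts to $s$, then there is $g \colon A \to S \otimes A$ with $\lambda_A \circ (s \otimes A) \circ g = \id[A]$; that is, $m := g$ and $e := \lambda_A \circ (s \otimes A)$ satisfy $e \circ m = \id[A]$, so $A$ is a retract of $S \otimes A$. Now $s \otimes (S \otimes A) = (s \otimes S) \otimes A$ is an isomorphism because $s \otimes S$ is (that is the definition of a subunit), so applying Lemma~\ref{lem:subunitsretract} to the retract $A \lhook\joinrel\longrightarrow S \otimes A$ with $B = S \otimes A$ gives that $s \otimes A$ is an isomorphism, which is $(a)$. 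The remaining implication $(c) \Rightarrow (a)$ is the same argument with a minor twist: from $S \otimes B \simeq A$ one has $S \otimes A \simeq S \otimes S \otimes B \simeq S \otimes B \simeq A$ (using $s \otimes S$ invertible in the middle), so $A$ is a retract — indeed isomorphic image — of $S \otimes B$, and $s \otimes (S \otimes B)$ is again an isomorphism, so Lemma~\ref{lem:subunitsretract} applies.

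The main obstacle is purely organisational rather than mathematical: deciding the order of the four implications so that each arrow is either trivial or a single clean application of Lemma~\ref{lem:subunitsretract}, and keeping careful track of the coherence isomorphisms $\lambda$, $\alpha$, $\sigma$ when rewriting $s \otimes (S \otimes A)$ as $(s \otimes S) \otimes A$ and when transporting the retract structure along the isomorphism in $(c)$. None of these calculations is deep — they are the usual naturality-and-bifunctoriality diagrams — but I would be slightly careful to check that the retraction in $(d)$ is genuinely of the form required by Lemma~\ref{lem:subunitsretract}, i.e.\ that $s \otimes A$ appears (up to the coherence isomorphism $\lambda_A$, which is invertible and hence harmless) as the relevant map $s \otimes B$ with $B = S \otimes A$.
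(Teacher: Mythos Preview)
Your proposal is correct and follows essentially the same route as the paper: the trivial chain $(a)\Rightarrow(b)\Rightarrow(c)$, then both $(c)\Rightarrow(a)$ and $(d)\Rightarrow(a)$ via Lemma~\ref{lem:subunitsretract}, using that $s\otimes(S\otimes A)$ (respectively $s\otimes(S\otimes B)$) is invertible because $s\otimes S$ is. The only difference is cosmetic: the paper labels the step through $(a)$ as ``$(c)\Rightarrow(d)$'' without separating it out, whereas you explicitly note that $(c)\Rightarrow(d)$ really passes through $(a)$.
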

\begin{proof}
  Trivially (a) $\implies$ (b) $\implies$ (c). For (c) $\implies$ (d): because $s$ is a subunit, $s \otimes S \otimes A$ is an isomorphism, so if $S \otimes B \simeq A$ then also $s \otimes A$ is an isomorphism by Lemma~\ref{lem:subunitsretract}.
  For (d) $\implies$ (a): if $A$ factors through $s \otimes A$, then because $s$ is a subunit $s \otimes S \otimes A$ is an isomorphism, and hence so is $s \otimes A$ by Lemma~\ref{lem:subunitsretract}.
\end{proof}

The following observation is simple, but effective in applications~\cite{enriquemolinerheunentull:space}.

\begin{lemma}\label{lem:supportin}
  Let $s \colon S \to I$ and $t \colon T \to I$ be subunits in a firm category.
  If $f$ restricts to $s$, and $g$ restricts to $t$, then $f \circ g$ and $f \otimes g$ restrict to $s \wedge t$.
\end{lemma}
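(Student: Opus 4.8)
The plan is to prove the two assertions in turn, using in both cases that restricting to $s$ means factoring through $\lambda_B \circ (s \otimes B)$, together with the fact (Proposition~\ref{prop:supportofobject}) that $s \otimes A$ is invertible whenever the identity on $A$ restricts to $s$, and that $s \wedge t$ is computed as $\lambda_I \circ (s \otimes t)$ by Proposition~\ref{prop:semilattice}.

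For the composite $f \circ g$: suppose $f \colon B \to C$ restricts to $s$ and $g \colon A \to B$ restricts to $t$. First I would argue that it suffices to show $f \circ g$ restricts to $s$ and to $t$ separately; for then $f \circ g$ factors through both $s \otimes C$ and $t \otimes C$ (up to the coherence isos), and a short diagram chase using that $(s \wedge t) \otimes C = \lambda_I^{-1}\otimes C$-conjugate of $(s \otimes (T \otimes C))$ — more precisely, that $s \otimes t \otimes C$ exhibits $S \otimes T \otimes C$ as the `intersection' — shows it factors through $(s \wedge t) \otimes C$. Actually the cleanest route: since $f$ restricts to $s$, we get that $\mathrm{id}_C$ followed by... no — better to observe directly that if $g$ restricts to $t$ then $g = \lambda_B \circ (t \otimes B) \circ g'$, so $f \circ g = f \circ \lambda_B \circ (t \otimes B) \circ g' = \lambda_C \circ (t \otimes C) \circ (T \otimes f) \circ g'$ by naturality of $\lambda$ and bifunctoriality; hence $f \circ g$ restricts to $t$. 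Symmetrically, writing $f = \lambda_C \circ (s \otimes C) \circ f'$ gives $f \circ g = \lambda_C \circ (s \otimes C) \circ f' \circ g$, so $f\circ g$ restricts to $s$ as well. Now combine: from $f \circ g = \lambda_C \circ (s \otimes C) \circ (f' \circ g)$ and the fact that $f' \circ g$ itself restricts to $t$ (same naturality argument applied to $f'\circ g$, since $g$ restricts to $t$), we get $f' \circ g = \lambda_{S \otimes C}\circ (t \otimes S \otimes C) \circ h$ for some $h$, and feeding this back yields a factorisation of $f \circ g$ through $(s \otimes t) \otimes C$ modulo coherence, i.e.\ through $(s \wedge t)\otimes C$. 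The symmetric monoidal coherence theorem handles the bookkeeping of associators and the braiding needed to rearrange $S \otimes T$ versus $T \otimes S$.

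For the tensor $f \otimes g$: here $f \colon A \to A'$ restricts to $s$ and $g \colon B \to B'$ restricts to $t$, and I want $f \otimes g \colon A \otimes B \to A' \otimes B'$ to restrict to $s \wedge t$, whose domain is $S \otimes T$. Write $f = \lambda_{A'} \circ (s \otimes A') \circ f'$ and $g = \lambda_{B'} \circ (t \otimes B') \circ g'$; then $f \otimes g = (\lambda_{A'} \otimes \lambda_{B'}) \circ (s \otimes A') \otimes (t \otimes B') \circ (f' \otimes g')$, and by bifunctoriality $(s \otimes A')\otimes(t \otimes B')$ factors as $(I \otimes A' \otimes t \otimes B')\circ(s \otimes A' \otimes T \otimes B')$ etc.; regrouping with the braiding to collect the $S$ and $T$ factors together exhibits this as $\bigl((s \otimes t)\otimes(A'\otimes B')\bigr)$ post-composed with coherence isomorphisms, which is exactly $(s\wedge t)\otimes(A'\otimes B')$ up to the $\lambda_I$ in the definition of $\wedge$. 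So $f \otimes g$ restricts to $s \wedge t$.

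The main obstacle is purely coherence bookkeeping: in both parts one must shuffle $S$ and $T$ tensor factors past an object using associators and the braiding, and verify that the resulting composite is genuinely the canonical map $(s\wedge t)\otimes(-)$ and not merely isomorphic to it in some uncontrolled way. By the coherence theorem for symmetric monoidal categories these rearrangements are unique and compatible, so the argument goes through; I would either invoke coherence to suppress these isomorphisms, or (as the paper does elsewhere) work in a strictified setting. Firmness is used only to guarantee that $s \wedge t$ is a well-defined subunit in the first place, via Proposition~\ref{prop:semilattice}; the factorisation arguments themselves need no extra hypotheses.
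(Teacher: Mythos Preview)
Your proposal is correct and is precisely the kind of argument the paper has in mind: the paper's own proof consists of the single word ``Straightforward'', and your factor--then--rearrange approach (factor $f$ through $s\otimes C$, use naturality of $\lambda$ to push the $t$-factorisation of $g$ up to the $S\otimes C$ level, then regroup via coherence; and similarly for $f\otimes g$) is exactly the intended unpacking. The only cosmetic remark is that the initial detour (``it suffices to show $f\circ g$ restricts to $s$ and to $t$ separately'') is unnecessary and you rightly abandon it; the direct route you settle on is both shorter and avoids needing any pullback-style intersection argument, which the category is not assumed to have at this point.
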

\begin{proof}
  Straightforward.
\end{proof}

In particular, if $A$ or $B$ restrict to a subunit $s$, then so does any map $A \to B$. 
It also follows that restriction respects retractions: if $e \circ m = \id$, then $m$ restricts to $s$ if and only if $e$ does.

\begin{definition}\label{def:restriction}
  Let $s$ be a subunit in a monoidal category $\cat{C}$. Define the \emph{restriction} of $\cat{C}$ to $s$, denoted by $\cat{C}\restrict{s}$, to be the full subcategory of $\cat{C}$ of objects $A$ for which $s \otimes A$ is an isomorphism.
\end{definition}

\begin{proposition}\label{prop:restrictioncoreflective}
  If $s$ is a subunit in a monoidal category $\cat{C}$, then $\cat{C}\restrict{s}$ is a coreflective monoidal subcategory of $\cat{C}$.
  \[\begin{pic}
    \node (l) at (0,0) {$\cat{C}$};
    \node (r) at (3,0) {$\cat{C}\restrict{s}$};
    \node at (1.5,0) {$\top$};
    \draw[->] ([yshift=2mm]l.east) to ([yshift=2mm]r.west);
    \draw[left hook->] ([yshift=-2mm]r.west) to ([yshift=-2mm]l.east);
  \end{pic}\]
  The right adjoint $\cat{C} \to \cat{C}\restrict{s}$, given by $A \mapsto S \otimes A$ and $f \mapsto S \otimes f$, is also called \emph{restriction} to $s$.
\end{proposition}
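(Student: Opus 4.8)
The plan is to exhibit the inclusion functor $J \colon \cat{C}\restrict{s} \hookrightarrow \cat{C}$ as having a right adjoint given on objects by $A \mapsto S \otimes A$, then separately check that this makes $\cat{C}\restrict{s}$ a monoidal subcategory. First I would verify that $S \otimes A$ actually lies in $\cat{C}\restrict{s}$ for every object $A$ of $\cat{C}$: since $s$ is a subunit, $s \otimes S$ is invertible, hence $s \otimes (S \otimes A) \cong (s \otimes S) \otimes A$ is invertible by bifunctoriality and coherence, so $S \otimes A \in \cat{C}\restrict{s}$. Thus $A \mapsto S \otimes A$, $f \mapsto S \otimes f$ is a well-defined functor $R \colon \cat{C} \to \cat{C}\restrict{s}$.

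Next I would produce the adjunction $J \dashv R$. The counit at $A \in \cat{C}$ should be $\varepsilon_A = \lambda_A \circ (s \otimes A) \colon S \otimes A \to A$; note this is precisely the map appearing in Definition~\ref{def:supportin}. For the unit at $X \in \cat{C}\restrict{s}$, since $s \otimes X$ is an isomorphism by definition of $\cat{C}\restrict{s}$, set $\eta_X = (s \otimes X)^{-1} \circ \lambda_X^{-1} \colon X \to S \otimes X$, which lives in $\cat{C}\restrict{s}$ because both $X$ and $S \otimes X$ do. One then checks naturality of $\varepsilon$ (routine, from bifunctoriality) and the two triangle identities: $\varepsilon_{JX} \circ J\eta_X = \lambda_X \circ (s \otimes X) \circ (s \otimes X)^{-1} \circ \lambda_X^{-1} = \id[X]$ is immediate, and $R\varepsilon_A \circ \eta_{RA} = \id[S \otimes A]$ amounts to an equality of morphisms out of $S \otimes A$ which, using that $S \otimes s \otimes A$ is invertible (as $s$ is a subunit), reduces to a coherence calculation. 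Alternatively, and perhaps more cleanly, I would directly establish the natural bijection $\cat{C}\restrict{s}(X, S \otimes A) \cong \cat{C}(X, A)$ by composing with $\varepsilon_A$: this is injective because $\varepsilon_A$ has a section on objects of $\cat{C}\restrict{s}$, and surjective because any $g \colon X \to A$ with $X \in \cat{C}\restrict{s}$ restricts to $s$ (by Proposition~\ref{prop:supportofobject}(d) applied to $X$, then Lemma~\ref{lem:supportin}), so factors through $\varepsilon_A$; the factorisation is unique since $\varepsilon_A$ is monic on $\cat{C}\restrict{s}$. This simultaneously gives coreflectivity and identifies the right adjoint.

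Finally I would check that $\cat{C}\restrict{s}$ is a monoidal subcategory: it contains $S$ itself (as $s \otimes S$ is invertible, $S \in \cat{C}\restrict{s}$), and if $s \otimes A$ and $s \otimes B$ are isomorphisms then so is $s \otimes (A \otimes B)$, since $S \otimes A \otimes B \cong A \otimes B$ via $\varepsilon_A \otimes B$ and then $S \otimes (A \otimes B) \cong A \otimes (B)$ — more directly, $s \otimes A \otimes B = (s \otimes A) \otimes B$ is invertible because $(-) \otimes B$ is a functor. Hence $\cat{C}\restrict{s}$ is closed under $\otimes$, with unit object $S$ (the restricted unit), and the inclusion is strong monoidal onto its image in the evident sense, though one should note the monoidal unit of $\cat{C}\restrict{s}$ is $S$, not $I$, so the inclusion $J$ is only \emph{lax} monoidal via $\lambda \colon I' \otimes A \to A$ with $I' = S$ — I would state this carefully to match the paper's conventions.

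\textbf{Main obstacle.} The only real subtlety is bookkeeping around which object serves as the monoidal unit in $\cat{C}\restrict{s}$ and in what precise sense the inclusion and coreflector are monoidal; the adjunction itself is forced once one observes that $\varepsilon_A = \lambda_A \circ (s \otimes A)$ is monic when restricted to $\cat{C}\restrict{s}$ and that every morphism from an object of $\cat{C}\restrict{s}$ restricts to $s$. I expect the triangle-identity verification (or equivalently the naturality of the hom-set bijection) to be the most calculation-heavy step, but it is entirely a matter of coherence diagrams and the invertibility of $s \otimes S$.
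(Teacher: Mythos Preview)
Your proposal is correct and follows essentially the same approach as the paper. The paper is more terse: it writes down the hom-set bijection $\cat{C}(A,B)\simeq\cat{C}\restrict{s}(A,S\otimes B)$ directly via the mutually inverse maps $f\mapsto (s\otimes f)\circ(s\otimes A)^{-1}\circ\rho_A^{-1}$ and $g\mapsto\lambda_B\circ(s\otimes B)\circ g$, and disposes of the monoidality question (including your worry about the unit being $S$ rather than $I$) by citing \cite{jacobsmandemaker:coreflections}, whereas you unpack both points more explicitly.
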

\begin{proof}
  First, if $A \in \cat{C}$, note that $S \otimes A$ is indeed in $\cat{C}\restrict{s}$ because $s \otimes S \otimes A$ is an isomorphism as $s$ is a subunit.
  Similarly, $\cat{C}\restrict{s}$ is a monoidal subcategory of $\cat{C}$. 
  Finally, there is a natural bijection 
  \begin{align*}
	\cat{C}(A,B) &\simeq  \cat{C}\restrict{s}(A,S\otimes B)\\
	f &\mapsto (s\otimes f) \circ (s \otimes A)^{-1} \circ \rho_A^{-1}\\
	\lambda_B \circ (s \otimes B) \circ g & \mapsfrom g
  \end{align*}
  for $A \in \cat{C}\restrict{s}$ and $B \in \cat{C}$.
  So restriction is right adjoint to inclusion.
  For monoidality, see~\cite[Theorem~5]{jacobsmandemaker:coreflections}; both functors are (strong) monoidal when $\cat{C}\restrict{s}$ has tensor unit $S$ and tensor product inherited from $\cat{C}$.
\end{proof}

\begin{remark}
The previous result motivates our terminology; a subunit $s$ in $\cat{C}$ is precisely a subobject of $I$ with the property that it may form the tensor unit of a monoidal subcategory of $\cat{C}$, namely $\cat{C}\restrict{s}$.
\end{remark}

{
\begin{example}
  Let $L$ be a semilattice, regarded as a firm category as in {Proposition}~\ref{ex:semilattice}. For a subset $U \subseteq L$ we define $\downset U = \{x \in L \mid x \leq u \text{ for some }u \in U\}$. Then for $s \in L$, the restriction $\cat{C} \restrict{s}$ is the subsemilattice $\downset s = \downset \{s \}$.
\end{example}
}

{
\begin{example}
  Let $L$ be a frame. A subunit in $\mathrm{Sh}(L)$ is just an element $s \in L$, and a morphism $f \colon A \Rightarrow B$ restricts to it precisely when $A(x)=\emptyset$ for $x \not\leq s$.
\end{example}
}

{
\begin{proposition}
  Let $S$ be a nondegenerate firm idempotent ideal of a nondegenerate firm commutative ring $R$.
  Then $\cat{FMod}_R\restrict{S}$ is monoidally equivalent to $\cat{FMod}_S$.
\end{proposition}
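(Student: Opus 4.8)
The plan is to realise the equivalence by restriction and extension of scalars along the ideal inclusion $S\hookrightarrow R$, and then to check that these functors respect all the relevant structure — firmness, nondegeneracy, and the monoidal products and units — using the firmness isomorphisms that are available on both sides.

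First I would set up the comparison functor $\Phi\colon\cat{FMod}_R\restrict{S}\to\cat{FMod}_S$. By definition an object of $\cat{FMod}_R\restrict{S}$ is a nondegenerate firm $R$-module $E$ whose scalar action $S\otimes_R E\to E$ is an isomorphism; restricting scalars along $S\hookrightarrow R$ makes $E$ an $S$-module, and I would set $\Phi(E):=E$ so regarded. For firmness over $S$, note that the quotient map $S\otimes_R E\to S\otimes_S E$ is always epic and that composing it with the $S$-action $S\otimes_S E\to E$ recovers the $R$-action $S\otimes_R E\to E$, which is an isomorphism by hypothesis; hence both factors are isomorphisms and $S\otimes_S E\cong E$. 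Since any $S$-linear map between objects of this kind is automatically $R$-linear (the $R$-action factors through the $S$-action because $E=SE$), $\Phi$ is full and faithful. For a quasi-inverse $\Psi\colon\cat{FMod}_S\to\cat{FMod}_R\restrict{S}$, given a firm nondegenerate $S$-module $F$ I would use $F\cong S\otimes_S F$ and equip $S\otimes_S F$ with the $R$-action $r\cdot(a\otimes f)=(ra)\otimes f$, which is well defined precisely because $S$ is an ideal of $R$; transporting this makes $F$ an $R$-module, and one checks it is firm and nondegenerate over $R$ and that $S\otimes_R F\to F$ is an isomorphism, so $F$ lies in $\cat{FMod}_R\restrict{S}$. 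That $\Phi$ and $\Psi$ are mutually inverse is then forced by the uniqueness of these transported structures together with the isomorphisms $E\cong S\otimes_R E$ and $F\cong S\otimes_S F$.

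It remains to upgrade this to a monoidal equivalence. By Proposition~\ref{prop:restrictioncoreflective}, $\cat{FMod}_R\restrict{S}$ is monoidal with tensor unit $S$ and tensor product inherited from $\cat{FMod}_R$, namely $\otimes_R$, while $\cat{FMod}_S$ has unit $S$ and tensor product $\otimes_S$. Since $\Phi$ fixes $S$, it suffices to produce a natural isomorphism $E\otimes_R E'\cong E\otimes_S E'$ for $E,E'\in\cat{FMod}_R\restrict{S}$ compatible with the associators and unitors. I would build this from the chain $E\otimes_S E'\cong E\otimes_S(S\otimes_R E')\cong(E\otimes_S S)\otimes_R E'\cong E\otimes_R E'$, using $E'\cong S\otimes_R E'$, associativity across the two base rings, and the firmness isomorphism $E\otimes_S S\cong E$ established above; coherence of the resulting strong monoidal structure is then a diagram chase.

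The main obstacle I anticipate is not this high-level strategy but the low-level bookkeeping of nonunital tensor products over two different base rings: one must justify each ``obvious'' isomorphism ($M\otimes_R R\cong M$ for firm $M$, $S\otimes_R S\cong S$, the mixed associativities above) in the nonunital setting, and — most delicately — transport nondegeneracy across $S\hookrightarrow R$ in both directions. Showing that an $E$ with $S\otimes_R E\cong E$ is nondegenerate over $S$, and dually that $\Psi(F)$ is nondegenerate over $R$, is where the hypotheses that $R$ is nondegenerate and that $S$ is a \emph{nondegenerate firm idempotent} ideal all get used, and this is the step I would treat with the most care rather than the monoidal coherence.
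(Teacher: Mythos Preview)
Your overall strategy---restriction and extension of scalars along $S\hookrightarrow R$---is exactly what the paper does, and you go further than the paper in sketching the monoidal comparison $E\otimes_R E'\cong E\otimes_S E'$ and in flagging nondegeneracy, both of which the paper's proof leaves implicit.

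There is, however, a genuine slip in your firmness argument for $\Phi(E)$. The canonical surjection goes the other way: since $S\subseteq R$, the relations defining $\otimes_R$ contain those defining $\otimes_S$, so one has a quotient map
\[
  S\otimes_S E \twoheadrightarrow S\otimes_R E,
\]
not $S\otimes_R E\to S\otimes_S E$. With the correct direction, composing with the isomorphism $S\otimes_R E\xrightarrow{\ \sim\ }E$ shows only that the $S$-action $S\otimes_S E\to E$ is \emph{surjective}; your ``hence both factors are isomorphisms'' no longer follows. The fix uses idempotence of $S$: given $s\in S$, write $s=\sum_i s_i's_i''$ with $s_i',s_i''\in S$; then for any $r\in R$ and $e\in E$,
\[
  sr\otimes e - s\otimes re \;=\; \sum_i\bigl(s_i'(s_i''r)\otimes e - s_i'\otimes (s_i''r)e\bigr) \;=\; 0 \quad\text{in }S\otimes_S E,
\]
since $s_i''r\in S$. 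Thus $S\otimes_S E\to S\otimes_R E$ is already an isomorphism, and firmness over $S$ follows. This same observation also underlies your monoidal isomorphism $E\otimes_S E'\cong E\otimes_R E'$ and the paper's unexplained step $A\otimes_S S\simeq A\otimes_R R$, so it is worth isolating explicitly.
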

}
\begin{proof}
  Send $A$ in $\cat{FMod}_R\restrict{S}$ to $A$ with $S$-module structure $a \cdot s := as$, and send an $R$-linear map $f$ to $f$. This defines a functor $\cat{FMod}_R\restrict{S} \to \cat{FMod}_S$.
  In the other direction, a firm $S$-module $B \simeq B \otimes_S S$ has firm $R$-module structure $(b \otimes s)\cdot r:=b \otimes (sr)$ because $S$ is idempotent, and if $g$ is an $S$-linear map then $g \otimes_S S$ is $R$-linear. This defines a functor $\cat{FMod}_S \to \cat{FMod}_R\restrict{S}$.
  Composing both functors sends a firm $R$-module $A$ to $A \otimes_S S \simeq A \otimes_R R \simeq A$, and a firm $S$-module $B$ to $B \otimes_S S \simeq B$.
\end{proof}

{
\begin{proposition}\label{prop:hilbertmodules:localadjunction}
  For any Hilbert $C_0(X)$-module $A$ and subunit $C_0(U)$ induced by an open subset $U \subseteq X$, the module $A \otimes C_0(U)$ is isomorphic to its submodule
  \[
    A|_U = \{a \in A \mid \inprod{a}{a} \in C_0(U) \}\text,
  \]
  viewing $C_0(U)$ as a closed ideal of $C_0(X)$ via~\eqref{eq:subunithilbertmodules}. Hence in $\HModc$ a morphism $f \colon A \to B$ restricts to this subunit when $\inprod{f(a)}{f(a)} \in C_0(U)$ for all $a \in A$. 
\end{proposition}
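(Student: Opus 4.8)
The plan is to exhibit an explicit isomorphism $A \otimes C_0(U) \to A|_U$ and then deduce the statement about restriction of morphisms from Proposition~\ref{prop:restrictioncoreflective}, which identifies restriction to the subunit $C_0(U)$ with the functor $A \mapsto A \otimes C_0(U)$. First I would define the candidate map $\varphi \colon A \otimes C_0(U) \to A$ on elementary tensors by $a \otimes f \mapsto af$, where $af$ is the $C_0(X)$-module action of $f \in C_0(U) \subseteq C_0(X)$ on $a \in A$. One checks that $af$ indeed lies in $A|_U$: since $\inprod{af}{af} = \bar{f}\,\inprod{a}{a}\,f = |f|^2\inprod{a}{a}$ and $f$ vanishes outside $U$, so does $\inprod{af}{af}$, hence $\inprod{af}{af} \in C_0(U)$. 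I would also verify $A|_U$ is a well-defined Hilbert submodule: it is a $C_0(X)$-submodule (using that $C_0(U)$ is an ideal and the Cauchy–Schwarz inequality to control $\inprod{ag}{ag}$ for $g \in C_0(X)$), and it is norm-closed because $\inprod{-}{-}$ is continuous and $C_0(U)$ is closed in $C_0(X)$.

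The key step is that $\varphi$ is an \emph{isometry} onto $A|_U$, from which injectivity, surjectivity onto the closure of its image, and the fact that the image is all of $A|_U$ (as $A|_U$ is closed) all follow. For the inner-product computation on elementary tensors, $\inprod{a \otimes f}{a' \otimes f'}_{A \otimes C_0(U)} = \inprod{a}{a'}_A \, \inprod{f}{f'}_{C_0(U)} = \inprod{a}{a'}_A \, \bar f f' = \inprod{af}{a'f'}_A$, using that the inner product on $C_0(U) \subseteq C_0(X)$ is $\inprod{f}{f'}(x) = \bar{f(x)} f'(x)$ and that the $C_0(X)$-action on $A$ is compatible with its inner product. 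By sesquilinearity this extends to the algebraic tensor product, so $\varphi$ preserves inner products there, hence extends to an isometry on the completion $A \otimes C_0(U)$. Surjectivity onto $A|_U$: given $a \in A$ with $\inprod{a}{a} \in C_0(U)$, I would use an approximate unit $(e_\lambda)$ for the C*-algebra $C_0(U)$ and show $a e_\lambda \to a$ in $A$; since $ae_\lambda = \varphi(a \otimes e_\lambda) \in \operatorname{im}\varphi$ and $\operatorname{im}\varphi$ is closed (being the isometric image of a complete space), $a \in \operatorname{im}\varphi$. The estimate $\|a - ae_\lambda\|_A^2 = \|\inprod{a - ae_\lambda}{a - ae_\lambda}\|_{C_0(X)} = \|\inprod{a}{a} - \inprod{a}{a}e_\lambda - e_\lambda\inprod{a}{a} + e_\lambda \inprod{a}{a} e_\lambda\|_{C_0(X)} \to 0$ holds because $\inprod{a}{a} \in C_0(U)$ and $(e_\lambda)$ is an approximate unit for $C_0(U)$.

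Finally, the statement about morphisms: by Proposition~\ref{prop:restrictioncoreflective} and Definition~\ref{def:supportin}, $f \colon A \to B$ restricts to $C_0(U)$ precisely when $f$ factors through $\lambda_B \circ (C_0(U) \otimes B) \colon C_0(U) \otimes B \to C_0(X) \otimes B \to B$; under the isomorphism just established this composite is exactly the inclusion $B|_U \hookrightarrow B$, so $f$ restricts to $C_0(U)$ iff $f(A) \subseteq B|_U$, i.e.\ iff $\inprod{f(a)}{f(a)} \in C_0(U)$ for all $a \in A$. (That $\lambda_B \circ (C_0(U) \otimes B)$ corresponds to the inclusion amounts to naturality of the isomorphism $A \mapsto A \otimes C_0(U) \cong A|_{U}$ in $A$ together with the fact that for $A = C_0(X)$ the map is the identity, so the square identifying $C_0(U) \otimes B$ with $B|_U$ commutes with the structure maps.)

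I expect the main obstacle to be the surjectivity argument — specifically, justifying the approximate-unit convergence $\|a - ae_\lambda\|_A \to 0$ for arbitrary $a \in A|_U$ rather than just for $a$ in the dense submodule $\varphi(\text{algebraic tensor product})$; this requires care with the Hilbert-module norm and the cross terms in the inner product, using positivity of $\inprod{a}{a}$ and the fact that $(e_\lambda)$ is increasing and bounded by $1$. Everything else is routine checking of sesquilinearity, ideal and closure properties, and bookkeeping with the coherence isomorphism $\lambda$.
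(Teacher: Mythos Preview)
Your proof is correct and follows essentially the same route as the paper: exhibit the map $a\otimes f\mapsto af$, observe its image lies in $A|_U$, and use an approximate unit for $C_0(U)$ to obtain surjectivity onto $A|_U$. The paper phrases things slightly differently---it first characterises when a module $A$ already lies in $\HModc\restrict{S}$ (namely when $A=A|_U$) and reads off the isomorphism from the bijectivity of $A\otimes S\to A$ in that case---and checks injectivity only on elementary tensors, whereas your inner-product-preserving (isometry) computation is the cleaner way to handle both injectivity and boundedness of the inverse at once.
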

}
\begin{proof}
  Write $S=C_0(U)$.
  We first prove that $A \in \HModc\restrict{S}$ if and only if $|a| \in C_0(U)$ for all $a \in A$, where $|a|^2 = \langle a, a \rangle$.
  On the one hand, if $a \in A$ and $f \in S$ then $|a \otimes f|(X \setminus U) = |a| |f| (X \setminus U)=0$. 
  Therefore $|a| \in C_0(U)$ for all $a \in A \otimes S$.
  Because $A \otimes S \simeq A$ is invertible, $|a| \in C_0(U)$ for all $a \in A$.

  On the other hand, suppose that $|a| \in C_0(U)=0$ for all $a \in A$.
  We are to show that the morphism $A \otimes S \to A$ given by $a \otimes f \mapsto af$ is bijective.
  To see injectivity, let $f \in S$ and $a \in A$, and suppose that $af=0$. Then $|a| \cdot |f|=|af|=0$, so for all $x \in U$ either $|a|(x)=0$ or $f(x)=0$. So $|a \otimes f|(U)=0$, and hence $a \otimes f=0$.
  To see surjectivity, let $a \in A$. Then $|a|(x)=0$ for all $x \in X \setminus U$. So $a=\lim af_n$ for an approximate unit $f_n$ of $S$. But that means $a$ is the image of $\lim a \otimes f_n$.
\end{proof}

\begin{remark}
  Restricting $\HModc$ to the subunit $C_0(U)$ for an open subset $U \subseteq X$ gives the full subcategory of modules $A$ with $A = A|_U$. This is nearly, but not quite, $\cat{Hilb}_{C_0(U)}$: any such module also forms a $C_0(U)$-module, but conversely there is no obvious way to extend the action of scalars on a general $C_0(U)$-module to make it a $C_0(X)$-module. There is a so-called \emph{local} adjunction between $\HModc\restrict{C_0(U)}$ and $\cat{Hilb}_{C_0(U)}$, which is only an adjunction when $U$ is clopen~\cite[Proposition~4.3]{clarecrisphigson:adjoint}.	
\end{remark}

Above we restricted along one individual subunit $s$. Next we investigate the structure of the family of these functors when $s$ varies.

\begin{definition}\label{def:gradedmonad} \cite{fujiikatsumatamellies:gradedmonads}
  Let $\cat{C}$ be a category and $(\cat{E}, \otimes, 1)$ a monoidal category. Denote by $[\cat{C}, \cat{C}]$ the monoidal category of endofunctors of $\cat{C}$ with $F\otimes G = G\circ F$.  An \emph{$\cat{E}$-graded monad} on $\cat{C}$ is a lax monoidal functor $T\colon \cat{E} \rightarrow [\cat{C}, \cat{C}]$. More concretely, an $\cat{E}$-graded monad consists of:
  \begin{itemize}
    \item a functor $T\colon \cat{E} \rightarrow [\cat{C}, \cat{C}]$;
    \item a natural transformation $\eta \colon \id[\cat{C}] \Rightarrow T(1)$;
    \item a natural transformation $\mu_{s,t}\colon T(t)\circ T(s) \rightarrow T(s\otimes t)$ for all $s,t$ in $\cat{E}$;
  \end{itemize}
  making the following diagrams commute for all $r,s,t$ in $\cat{E}$.
  \begin{align*}
    \begin{pic}[xscale=2.5, yscale=1.25]
      \node (TsTtTu) at (1,2) {$T(t)\circ T(s)\circ T(r)$};
	  \node (TstTu) at (0,1) {$T(t)\circ T(r\otimes s)$};
	  \node (Tstu1) at (0,0) {$T((r\otimes s)\otimes t)$};
	  \node (Tstu2) at (2,0) {$T(r\otimes (s\otimes t))$};
	  \node (TsTtu) at (2,1) {$T(t\otimes s)\circ T(r)$};
	  \draw[->] (TsTtTu) to node[left, xshift=-2mm]{$\mu_{r,s}\otimes \id[T(t)]$} (TstTu);
	  \draw[->] (TstTu) to node[left]{$\mu_{r\otimes s,t}$} (Tstu1);
	  \draw[->] (Tstu1) to node[above]{$T(\alpha_{r,s,t})$} (Tstu2);
	  \draw[->] (TsTtu) to node[right]{$\mu_{r,s\otimes t}$} (Tstu2);
	  \draw[->] (TsTtTu) to node[right, xshift=1mm]{$\id[T(r)]\otimes\mu_{s,t}$} (TsTtu); 
    \end{pic}
  \end{align*}
  \begin{align*}
    \begin{pic}[xscale=3.5,yscale=.8]
	  \node (idc Ts) at (0,1.5) {$T(s)\circ \id[\cat{C}]$};
	  \node (Ts) at (0,0) {$T(s)$};
	  \node (T1s) at (1.5,0) {$T(1\otimes s)$};
	  \node (T1Ts) at (1.5,1.5) {$T(s)\circ T(1)$};
	  \draw[-, double] (idc Ts) to node[left]{} (Ts);
	  \draw[->] (idc Ts) to node[below]{$\eta\otimes \id[T(s)]$} (T1Ts);
	  \draw[->] (T1Ts) to node[right]{$\mu_{1,s}$} (T1s);
	  \draw[->] (T1s) to node[below]{$T(\lambda_s)$} (Ts);
    \end{pic}
    \\
    \begin{pic}[xscale=3.5,yscale=.8]
	  \node (Ts idc) at (0,1.5) {$\id[\cat{C}]\circ T(s)$};
	  \node (Ts) at (0,0) {$T(s)$};
	  \node (Ts1) at (1.5,0) {$T(s\otimes 1)$};
	  \node (TsT1) at (1.5,1.5) {$T(1)\circ T(s)$};
	  \draw[-, double] (Ts idc) to node[left]{} (Ts);
	  \draw[->] (Ts idc) to node[below]{$\id[T(s)]\otimes \eta$} (TsT1);
	  \draw[->] (TsT1) to node[right]{$\mu_{s,1}$} (Ts1);
	  \draw[->] (Ts1) to node[below]{$T(\rho_s)$} (Ts);
    \end{pic}
  \end{align*}
\end{definition}

\begin{theorem}\label{thm:gradedmonad}
  Let $\cat{C}$ be a monoidal category.
  Restriction is a monad graded over the subunits, when we do not identify monomorphisms representing the same subunit. More precisely, it is an $\cat{E}$-graded monad, where $\cat{E}$ has as objects monomorphisms $s \colon S \rightarrowtail I$ in $\cat{C}$ with $s \otimes S$ an isomorphism, and as morphisms $f \colon s \to t$ those $f$ in $\cat{C}$ with $s = t \circ f$.
\end{theorem}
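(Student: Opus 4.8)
The plan is to construct the graded monad explicitly from the data we already have. The underlying category of grades is $\cat{E}$, whose objects are monomorphisms $s\colon S\rightarrowtail I$ with $s\otimes S$ invertible, and whose morphisms $f\colon s\to t$ are morphisms of $\cat{C}$ with $s=t\circ f$; this $\cat{E}$ is monoidal under $s\otimes t$ (with unit $\id[I]$), since Proposition~\ref{prop:semilattice} shows $s\otimes t$ is again such a monomorphism and the coherence isomorphisms of $\cat{C}$ supply the associator and unitors of $\cat{E}$ (note we are not quotienting, so $\cat{E}$ is a genuine monoidal category rather than a poset). The functor $T\colon\cat{E}\to[\cat{C},\cat{C}]$ sends $s$ to the endofunctor $S\otimes(-)$ and sends a grade morphism $f\colon s\to t$ to the natural transformation $f\otimes(-)\colon S\otimes(-)\Rightarrow T\otimes(-)$; functoriality is immediate from bifunctoriality of $\otimes$. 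Recall $[\cat{C},\cat{C}]$ is monoidal with $F\otimes G=G\circ F$, so $T(t)\otimes T(s)$ is the composite $A\mapsto T\otimes(S\otimes A)$.

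Next I would write down the unit and multiplication. The unit $\eta\colon\id[\cat{C}]\Rightarrow T(\id[I])=I\otimes(-)$ is just $\lambda^{-1}$, componentwise $\lambda_A^{-1}\colon A\to I\otimes A$. For the multiplication $\mu_{s,t}\colon T(t)\circ T(s)\Rightarrow T(s\otimes t)$, i.e.\ a natural transformation $T\otimes(S\otimes(-))\Rightarrow (S\otimes T)\otimes(-)$, I would take the evident reassociating isomorphism $\alpha$ that moves the bracketing, componentwise $T\otimes(S\otimes A)\xrightarrow{\sim}(T\otimes S)\otimes A\xrightarrow{\sigma\otimes A}(S\otimes T)\otimes A$; here the braiding $\sigma$ is used to match the order of the grade product $s\otimes t$. (One has a choice of how to thread $\sigma$; any consistent choice works, and the laws below constrain it only up to the coherence theorem.) Both $\eta$ and $\mu$ are thus built entirely from structural isomorphisms of the braided monoidal category $\cat{C}$, so in particular $\mu_{s,t}$ is an isomorphism, though the graded-monad definition does not require this.

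It then remains to verify the three coherence diagrams of Definition~\ref{def:gradedmonad}: the associativity pentagon-like hexagon relating the two ways of composing $\mu$ across a triple $r,s,t$, and the two unit triangles. Each of these is an equation between natural transformations all of whose components are composites of coherence isomorphisms ($\alpha$, $\lambda$, $\rho$, $\sigma$) of $\cat{C}$, so each follows from Mac Lane's coherence theorem for braided monoidal categories together with the hexagon axioms for the braiding — there is nothing to check beyond confirming that the two sides have the same source and target and are built from the structural isomorphisms, which the coherence theorem then forces to be equal. I would spell out one such diagram (the associativity one) to show the braiding is threaded consistently, and dispatch the unit triangles by the same principle. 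The main obstacle is purely bookkeeping: getting the braiding inserted in a way that is compatible across all three axioms simultaneously, so that the coherence theorem genuinely applies; once the insertion is pinned down, every verification is mechanical.
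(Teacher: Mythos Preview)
Your approach is correct and close to the paper's, but there is one notable difference worth pointing out. You set $T(s)=S\otimes(-)$, tensoring on the left; the paper instead takes $T(s)=(-)\otimes S$, tensoring on the right. With the right-hand convention, $T(t)\circ T(s)$ applied to $A$ is $(A\otimes S)\otimes T$ while $T(s\otimes t)$ applied to $A$ is $A\otimes(S\otimes T)$, so $\mu_{s,t}$ is simply the associator $\alpha^{-1}_{A,S,T}$ with no braiding at all, and $\eta$ is $\rho^{-1}$ rather than $\lambda^{-1}$. The associativity and unit diagrams then reduce directly to Mac Lane's pentagon and triangle, so nothing beyond plain monoidal coherence is needed. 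Your left-tensoring choice forces you to insert $\sigma_{T,S}\otimes A$ into $\mu_{s,t}$ and then appeal to braided coherence to close the diagrams; this is valid in a braided monoidal category, and your remark that ``any consistent choice works'' is accurate, but it introduces bookkeeping that the paper's convention sidesteps entirely. The trade-off is that your version is perhaps more in line with the way restriction was written in Proposition~\ref{prop:restrictioncoreflective} (as $S\otimes(-)$), whereas the paper silently switches sides here to get a cleaner multiplication.
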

\begin{proof}
  The functor $\cat{E} \to [\cat{C},\cat{C}]$ sends $s \colon S \rightarrowtail I$ to $(-) \otimes S$, and $f$ to the natural transformation $\id[(-)] \otimes f$.
  The natural transformation $\eta_E \colon E \to E \otimes I$ is given by $\rho_E^{-1}$.
  The family of natural transformations $\mu_{s,t} \colon ((-)\otimes S)\otimes T \rightarrow (-)\otimes(S\otimes T)$ is given by $\alpha_{(-),S,T}$. Associativity and unitality diagrams follow.
\end{proof}

We end this section by giving two characterisations of subunits in terms that are perhaps more well-known. The first characterisation is in terms of idempotent comonads.

\begin{definition}
  A \emph{restriction comonad} on a monoidal category $\cat{C}$ is a monoidal comonad $F \colon \cat{C} \to \cat{C}$:
  \begin{itemize}
    \item whose comultiplication $\delta \colon F \Rightarrow F^2$ is invertible;
    \item whose counit $\varepsilon \colon F \to \id[\cat{C}]$ has a monic unit component $\varepsilon_I \colon F(I) \rightarrowtail I$.
  \end{itemize}
\end{definition}

\begin{proposition}\label{prop:restrictioncomonads}
  Let $\cat{C}$ be a braided monoidal category.
  There is a bijection between subunits in $\cat{C}$ and restriction comonads on $\cat{C}$.
\end{proposition}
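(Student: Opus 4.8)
The plan is to set up maps in both directions and verify they are mutually inverse, identifying isomorphic comonads on the comonad side.

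Given a subunit $s \colon S \rightarrowtail I$, take $F_s$ to be the comonad on $\cat{C}$ induced by the coreflective monoidal subcategory $\cat{C}\restrict{s}$ of Proposition~\ref{prop:restrictioncoreflective}: concretely $F_s = S \otimes (-)$, with counit $\varepsilon_A = \lambda_A \circ (s \otimes A)$ and comultiplication $\delta_A$ assembled from the inverse of $s \otimes (S \otimes A)$ and coherence isomorphisms. This $F_s$ is a monoidal comonad because it arises from a monoidal adjunction; its comultiplication is invertible because the adjunction is a coreflection, equivalently because the comonad is idempotent; and its counit has component $\varepsilon_I = \lambda_I \circ (s \otimes I) = s \circ \rho_S$ at $I$, which represents the subobject $s$ and so is monic. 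Hence $F_s$ is a restriction comonad, the comonad axioms, coassociativity and braiding coherence being routine.

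Conversely, given a restriction comonad $F$, put $S = F(I)$ and let $s_F$ be the subobject represented by the monic $\varepsilon_I \colon S \rightarrowtail I$; the substance is that $s_F$ is a subunit, i.e.\ that $s_F \otimes S$ is invertible. The comonoidal structure map $F(I \otimes I) \to F(I) \otimes F(I)$, composed with $F(\lambda_I^{-1})$, furnishes a map $w \colon S \to S \otimes S$, and the unit-coherence axiom for the monoidal functor $F$ (together with $\varepsilon_I$ being its counit component at $I$) gives $\lambda_S \circ (s_F \otimes S) \circ w = \id[S]$, so $s_F \otimes S$ is split epic. To promote this to invertibility, invoke that $\delta$ is invertible, so that $F$ is idempotent: then $\varepsilon_{F(I)} = \delta_I^{-1}$ is invertible and $F(\varepsilon_I) = \varepsilon_{F(I)}$, and feeding this together with naturality of the comonoidal structure maps along $s_F \colon S \to I$ back into the computation exhibits $w$ as a two-sided inverse of $\lambda_S \circ (s_F \otimes S)$. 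Finally one checks the two assignments are mutually inverse: the round trip from a subunit returns $s$ because the counit of $F_s = S \otimes (-)$ at $I$ represents $s$; and the round trip from a comonad $F$ produces $F_{s_F} = S \otimes (-)$, which is identified with $F$ via the natural transformation $\theta_A = (S \otimes \varepsilon_A) \circ \big(F(I \otimes A) \to F(I) \otimes F(A)\big) \circ F(\lambda_A^{-1}) \colon F(A) \to S \otimes A$, preserving the counit by naturality and a unitor identity, preserving the comultiplication by coassociativity and the comonoidal axioms, and invertible because idempotence of $F$ reduces invertibility of each $\theta_A$ to the case of $A$ in the essential image of $F$.

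I expect the main obstacle to be exactly the step promoting $s_F \otimes S$ from split epic to invertible — equivalently, showing $\theta$ is an isomorphism of comonads, equivalently showing that any idempotent monoidal comonad with monic counit component at $I$ is, up to isomorphism, tensoring with the subunit $F(I)$. Invertibility of $\delta$ is the indispensable lever here: without it one would obtain only split epics, matching the weaker variant mentioned in the remark after Definition~\ref{def:subunit}. Everything else is coherence bookkeeping, lightened on the subunit side by Proposition~\ref{prop:restrictioncoreflective} and on the comonad side by idempotence.
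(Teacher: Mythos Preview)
Your overall strategy coincides with the paper's: maps in both directions, then round trips. The subunit-to-comonad direction and the choice $s_F=\varepsilon_I$ are the same, and you are in fact more careful than the paper in spelling out that on the comonad side one must work up to isomorphism and exhibit a comonad isomorphism $\theta\colon F\Rightarrow F(I)\otimes(-)$.

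The genuine divergence, and the gap, is exactly where you anticipate it. The paper establishes that $F(I)\otimes\varepsilon_I$ is invertible by a direct diagram chase: using the (op)monoidal coherence maps $\varphi$ and $\psi$, naturality of $\varepsilon$, and $\delta^{-1}$, it exhibits $F(I)\otimes\varepsilon_I$ as one edge of a commuting diagram whose remaining long path consists entirely of isomorphisms. Your alternative --- obtain $s_F\otimes S$ split epic from the oplax unit axiom, then upgrade via idempotence and naturality --- does not go through as stated. Knowing that $F(\varepsilon_I)=\varepsilon_{F(I)}=\delta_I^{-1}$ is invertible tells you nothing about $\varepsilon_I\otimes F(I)$ until you already know $F\simeq F(I)\otimes(-)$, and that is precisely the content of $\theta$ being invertible. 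Your argument for the latter (``idempotence reduces invertibility of $\theta_A$ to $A$ in the essential image of $F$'') does not work either, since the codomain $S\otimes A$ of $\theta_A$ is not a priori in that image. So the circularity you feared is real: split epic is all the unit axiom alone delivers, and the upgrade needs the concrete diagram the paper supplies (or an equivalent computation you have not yet written down).
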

\begin{proof}
  If $s \colon S \rightarrowtail I$ is a subunit, then $F(A)=S \otimes A$ defines a comonad by Proposition~\ref{prop:restrictioncoreflective}. Its comultiplication is given by $\delta_A = (\lambda_{S \otimes A} \circ (s \otimes S \otimes A))^{-1}$, by definition being an isomorphism. Its counit is given by $\varepsilon_A = \lambda_A \circ (s \otimes A)$. Because $\rho_I=\lambda_I$, its component $\varepsilon_I = \lambda_I \circ (s \otimes I) = \rho_I \circ (s \otimes I) = s \circ \rho_S$ is monic.
  
  Conversely, if $F$ is a restriction monad, then $\varepsilon_I \colon F(I) \rightarrowtail I$ is a subobject of the tensor unit.
  Writing $\varphi_{A,B} \colon A \otimes F(B) \to F(A \otimes B)$ for the coherence maps, and $\psi_{A,B} = F(\sigma) \circ \varphi_{B,A} \circ \sigma \colon F(A) \otimes B \to F(A \otimes B)$ for its induced symmetric version, the insides of the following diagram commute:
  \[\begin{pic}[xscale=4,yscale=1.2]
    \node (bl) at (0,0) {$F^2(I \otimes I)$};
    \node (b) at (1,0) {$F(I \otimes I)$};
    \node (br) at (2,0) {$F(I \otimes I)$};
    \node (m) at (1,1) {$F^2(I \otimes I)$};
    \node (l) at (0,2) {$F(F(I) \otimes I)$};
    \node (t) at (1,2) {$F(F(I) \otimes I)$};
    \node (tl) at (0,3) {$F(I) \otimes F(I)$};
    \node (tr) at (2,3) {$F(I) \otimes I$};
    \draw[-, double distance=.75mm] (b) to (br);
    \draw[-, double distance=.75mm] (bl) to (m);
    \draw[-, double distance=.75mm] (l) to (t);    
    \draw[->] (tl) to node[above]{$F(I) \otimes \varepsilon_I$} (tr);
    \draw[->] (tl) to node[left]{$\varphi_{F(I),I}$} (l);
    \draw[->] (l) to node[left]{$F(\psi_{I,I})$} (bl);
    \draw[->] (bl) to node[below]{$\delta^{-1}_{I \otimes I}$} (b);
    \draw[->] (b) to node[right]{$\delta_{I \otimes I}$} (m);
    \draw[->] (m) to node[right]{$F(\psi_{I,I}^{-1})$} (t);
    \draw[->] (m) to node[above]{$\varepsilon_{F(I \otimes I)}$} (br);
    \draw[->] (t) to node[below=1mm]{$\varepsilon_{F(I) \otimes I}$} (tr);
    \draw[->] (br) to node[right]{$\psi^{-1}_{I,I}$} (tr);
  \end{pic}\]
  But the long outside path is composed entirely of isomorphisms. Hence $F(I) \otimes \varepsilon_I$ is invertible, and $\varepsilon_I$ is a subunit.

  These two constructions are clearly inverse to each other.
\end{proof}

\begin{remark}\label{rem:subunitcomonads}
  Monoidal comonads on $\cat{C}$ form a category with morphisms of monoidal comonads~\cite{street:formal}.
  This category is monoidal as a subcategory of $[\cat{C},\cat{C}]$.
  The monoidal unit is the identity comonad $A \mapsto A$.
  A subunit is a comonad $F$ with a comonad morphism $\lambda \colon F \Rightarrow \id[\cat{C}]$ whose comultiplication is idempotent, and such that $\lambda_A \colon F(A) \to A$ is monic.
  But by coherence, the latter means that $\varepsilon_I = \lambda_I \colon F(I) \rightarrowtail I$ is monic.
  It follows that subunits in $\cat{C}$ also correspond bijectively to subunits in $[\cat{C},\cat{C}]$ in the same sense as Definition~\ref{def:subunit}, though we have not strictly defined these since the latter category is not braided. See also~\cite[Remark~2.3]{boyarchenkodrinfeld:idempotent}.
\end{remark}

It also follows that restrictions monads automatically satisfy the Frobenius law $\delta^{-1} F \circ F \delta = F \delta^{-1} \circ \delta F$~\cite{heunenkarvonen:monads}, matching the viewpoint in \cite{hines:classicalstructures}.

The second characterisation of subunits $s$ we will give is in terms of the subcategory $\cat{C}\restrict{s}$.

\begin{definition}\label{def:tensorideal}
  Let $\cat{C}$ be a monoidal category.
  A \emph{monocoreflective tensor ideal} is a full replete subcategory $\cat{D}$ such that:
  \begin{itemize}
    \item if $A \in \cat{C}$ and $B \in \cat{D}$, then $A \otimes B \in \cat{D}$;
    \item the inclusion $F \colon \cat{D} \hookrightarrow \cat{C}$ has a right adjoint $G \colon \cat{C} \to \cat{D}$;
    \item the component of the counit at the tensor unit $\varepsilon_I \colon F(G(I))\to I$ is monic;
    \item $F(B) \otimes \varepsilon_I$ is invertible for all $B \in \cat{D}$.
  \end{itemize}
\end{definition}

\begin{proposition}\label{prop:tensorideals}
  Let $\cat{C}$ be a firm category.
  There is a bijection between $\ISub(\cat{C})$ and the set of monocoreflective tensor ideals of $\cat{C}$.
\end{proposition}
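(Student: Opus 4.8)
The plan is to realise the bijection by the two evident constructions: to a subunit $s \colon S \rightarrowtail I$ assign the subcategory $\cat{C}\restrict{s}$, and to a monocoreflective tensor ideal $\cat{D}$ assign the subobject $\varepsilon_I \colon F(G(I)) \rightarrowtail I$ coming from the counit of its coreflection. I would then check that each construction lands in the intended target, and that the two composites are the identity.

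For the forward direction, fix a subunit $s$. Fullness and repleteness of $\cat{C}\restrict{s}$ are immediate, since invertibility of $s \otimes A$ depends only on the isomorphism class of $A$. For the tensor-ideal property, if $A \in \cat{C}$ and $B \in \cat{C}\restrict{s}$, then naturality of the braiding and the coherence isomorphisms identify $s \otimes (A \otimes B)$ with $A \otimes (s \otimes B)$ up to isomorphism, and the latter is invertible; equivalently this is Lemma~\ref{lem:supportin} applied to $A$ (which restricts to $I$) and $B$ (which restricts to $s$). The inclusion $F \colon \cat{C}\restrict{s} \hookrightarrow \cat{C}$ has right adjoint $G = S \otimes (-)$ by Proposition~\ref{prop:restrictioncoreflective}, with counit $\varepsilon_B = \lambda_B \circ (s \otimes B)$; at the unit this is $\varepsilon_I = \lambda_I \circ (s \otimes I) = s \circ \rho_S$, monic as a composite of monics. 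Finally, for $B \in \cat{C}\restrict{s}$ the morphism $F(B) \otimes \varepsilon_I = (B \otimes s) \circ (B \otimes \rho_S)$ is invertible, since $B \otimes \rho_S$ is an isomorphism and $B \otimes s$ is invertible whenever $s \otimes B$ is, again by naturality of the braiding. So $\cat{C}\restrict{s}$ is a monocoreflective tensor ideal.

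For the reverse direction, let $\cat{D}$ be a monocoreflective tensor ideal, put $S = F(G(I))$, and let $s = \varepsilon_I \colon S \rightarrowtail I$, a subobject of $I$ by the third axiom. Since $S = F(G(I))$ with $G(I) \in \cat{D}$, the fourth axiom makes $S \otimes s$ invertible, hence $s \otimes S$ invertible by naturality of the braiding, so $s$ is a subunit. It remains to check the round trips. Starting from a subunit $s$, the ideal $\cat{C}\restrict{s}$ has $G(I) = S \otimes I$ with counit component $\varepsilon_I = s \circ \rho_S$, which represents the same subobject of $I$ as $s$. Starting from a monocoreflective tensor ideal $\cat{D}$ with associated subunit $s = \varepsilon_I \colon S \rightarrowtail I$, I must show $\cat{C}\restrict{s} = \cat{D}$ as full subcategories. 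One inclusion: if $A \in \cat{D}$, the fourth axiom gives $F(A) \otimes \varepsilon_I = A \otimes s$ invertible, hence $s \otimes A$ invertible, so $A \in \cat{C}\restrict{s}$. The other inclusion: if $s \otimes A$ is invertible, then $S \otimes A \in \cat{D}$ because $S = F(G(I)) \in \cat{D}$ and $\cat{D}$ is a tensor ideal closed under isomorphism, and the isomorphism $\lambda_A \circ (s \otimes A) \colon S \otimes A \to A$ then places $A$ in $\cat{D}$ by repleteness.

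The step I expect to require the most care is this last inclusion $\cat{C}\restrict{s} \subseteq \cat{D}$: it is the one place where one must combine tensor-ideal closure (to push $S \otimes A$ into $\cat{D}$) with repleteness (to transport $A$ back along $s \otimes A$). Each of the four defining conditions of a monocoreflective tensor ideal earns its keep somewhere — the third making $s$ a monomorphism and the fourth making it a subunit, the tensor-ideal closure together with repleteness recovering $\cat{D}$ from $s$ — but none of the individual verifications is hard; they are essentially already contained in the proofs of Propositions~\ref{prop:restrictioncoreflective} and~\ref{prop:restrictioncomonads}, so the remaining work is mostly bookkeeping with coherence and braiding naturality.
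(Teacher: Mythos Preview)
Your proposal is correct and follows essentially the same approach as the paper: assign $s \mapsto \cat{C}\restrict{s}$ and $\cat{D} \mapsto \varepsilon_I$, then check the two round trips, using the tensor-ideal property and repleteness for the inclusion $\cat{C}\restrict{s} \subseteq \cat{D}$ exactly as you do. The paper's proof is merely terser, deferring the verification that $\cat{C}\restrict{s}$ is a monocoreflective tensor ideal to Proposition~\ref{prop:restrictioncoreflective}, whereas you spell out each axiom explicitly.
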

\begin{proof}
  A subunit $s$ corresponds to $\cat{C}\restrict{s}$, and a monocoreflective tensor ideal $\cat{D}$ corresponds to $\varepsilon_I$.
  First notice that $\cat{C}\restrict{s}$ is indeed a monocoreflective tensor ideal by Proposition~\ref{prop:restrictioncoreflective}.
  Starting with $s \in \ISub(\cat{C})$ ends up with $s \circ \lambda \colon I \otimes S \rightarrowtail I$, which equals $s$ qua subobject.
  Starting with a monocoreflective tensor ideal $\cat{D}$ ends up with $\{ A \in \cat{C} \mid A \otimes \varepsilon_I \text{ is invertible}\}$.
  We need to show that this equals $\cat{D}$. 
  One inclusion is obvious. For the other, let $A \in \cat{C}$. If $A \otimes \varepsilon_I \colon A \otimes FG(I) \to A \otimes I$ is invertible, then $A \simeq A \otimes F(G(I))$, and so $A \in \cat{D}$ because $\cat{D}$ is a tensor ideal.
\end{proof}

We leave open the question of what sort of factorization systems are induced by monocoreflective tensor ideals~\cite{cassidyhebertkelly:factorization,day:monoidallocalisation}.

\section{Simplicity}\label{sec:simplicity}

Localisation in algebra generally refers to a process that adds formal inverses to an algebraic structure~\cite[Chapter 7]{kashiwara2005categories}. 
This section discusses how to localise all subunits in a monoidal category at once, by showing that restriction is an example of localisation in this sense.

\begin{definition}
	Let $\cat{C}$ be a category and $\Sigma$ a collection of morphisms in $\cat{C}$. 
  A \emph{localisation of $\cat{C}$ at $\Sigma$} is a category $\cat{C}[\Sigma^{-1}]$ and a functor $Q \colon \cat{C} \to \cat{C}[\Sigma^{-1}]$ such that:
	\begin{itemize}
		\item $Q(f)$ is an isomorphism for every $f\in \Sigma$;
		\item for any functor $R\colon\cat{C} \rightarrow \cat{D}$ such that $R(f)$ is an isomorphism for all $f\in\Sigma$, there exists a functor $\overline{R}\colon\cat{C}[\Sigma^{-1}] \rightarrow \cat{D}$ and a natural isomorphism $\overline{R}\circ Q \simeq R$;
    \[\begin{tikzpicture}
      \node (l) at (0,1.3) {$\cat{C}$};
      \node (tr) at (3,1.3) {$\cat{C}[\Sigma^{-1}]$};
      \node (br) at (3,0) {$\cat{D}$};
      \draw[->] (l) to node[above]{$Q$} (tr);
      \draw[->] (l) to node[below]{$R$} (br);
      \draw[->,dashed] (tr) to (br);
      \node at (2,.9) {$\simeq$};
    \end{tikzpicture}\]
		\item precomposition  
		$
			(-)\circ Q \colon \big[\cat{C}[\Sigma^{-1}], \cat{D}\big] \to [\cat{C}, \cat{D}]
		$
		is full and faithful for every category $\cat{D}$.
	\end{itemize}
\end{definition}

\begin{proposition}
	Restriction $\cat{C} \to \cat{C}\restrict{s}$ at a subunit $s$
  is a localisation of $\cat{C}$ at $\{ s \otimes A \mid A\in\cat{C} \}$.
\end{proposition}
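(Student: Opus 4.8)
The plan is to verify the three defining conditions of a localisation for the functor $G = (-) \otimes S \colon \cat{C} \to \cat{C}\restrict{s}$ (the right adjoint from Proposition~\ref{prop:restrictioncoreflective}), using heavily the fact that this adjunction is a coreflection, i.e.\ the inclusion $F \colon \cat{C}\restrict{s} \hookrightarrow \cat{C}$ is fully faithful and so the counit $\varepsilon$ of $F \dashv G$ is a natural isomorphism while the unit $\eta_A \colon A \to S \otimes A$ is the candidate universal morphism.

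First I would check that $G$ inverts every $s \otimes A$. By Proposition~\ref{prop:supportofobject}, the objects of $\cat{C}\restrict{s}$ are exactly those $B$ for which the identity restricts to $s$, and $G$ lands in $\cat{C}\restrict{s}$; since on $\cat{C}\restrict{s}$ the counit $\varepsilon$ is a natural isomorphism and $G(s \otimes A)$ is, up to the coherence isos $\lambda$ and the naturality of $\eta$, precisely the component $\eta$ or $\varepsilon$ at the relevant object, it becomes invertible. Concretely, $S \otimes (s \otimes A) = S \otimes s \otimes A$ is an isomorphism because $s$ is a subunit, so $G(s \otimes A)$ (modulo $\lambda$'s) is an iso; this is the easy step.

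Next, for the universal property, given $R \colon \cat{C} \to \cat{D}$ inverting all $s \otimes A$, I would set $\overline{R} = R \circ F \colon \cat{C}\restrict{s} \to \cat{D}$ and produce a natural isomorphism $\overline{R} \circ G = R \circ F \circ G \simeq R$ from the counit $\varepsilon \colon FG \Rightarrow \id[\cat{C}]$ --- but $\varepsilon_A = \lambda_A \circ (s \otimes A)$ is \emph{not} itself invertible in $\cat{C}$, only its $R$-image is, so the required natural iso is $R(\varepsilon) \colon RFG \Rightarrow R$, which is invertible exactly because each $R(s \otimes A)$ is. For the third condition --- that $(-) \circ G \colon [\cat{C}\restrict{s}, \cat{D}] \to [\cat{C}, \cat{D}]$ is fully faithful --- I would argue that $F$ itself, being fully faithful with $G$ as a retraction ($GF \simeq \id$ via $\eta$ restricted to $\cat{C}\restrict{s}$, which is iso there), induces a fully faithful $(-) \circ F$, and then use that $(-) \circ G$ is, up to the natural iso $GF \simeq \id$, a section/retraction pair with $(-) \circ F$; more cleanly, precomposition by an adjoint equivalence-like coreflection is fully faithful. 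Here one uses that $F \dashv G$ with invertible counit makes $G$ a reflective-type localisation functor in the standard categorical sense.

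The main obstacle I expect is the third bullet (full faithfulness of $(-) \circ G$), and more specifically being careful that $G$, not $F$, is the localisation functor even though $F$ is the fully faithful one: the point is that $G$ and $F$ are mutually inverse \emph{up to natural isomorphism} ($GF \cong \id[\cat{C}\restrict{s}]$ and, while $FG \not\cong \id[\cat{C}]$, it becomes so after applying any $R$ that inverts the $s \otimes A$), so on functor categories $(-)\circ F$ and $(-)\circ G$ are inverse equivalences onto the relevant replete subcategory of $[\cat{C},\cat{D}]$ --- namely those functors inverting all $s \otimes A$ --- which gives fullness and faithfulness of $(-)\circ G$ for free. I would also double-check the coherence bookkeeping relating $G(s \otimes A)$ to $\eta$ and $\varepsilon$, but expect it to be routine given Propositions~\ref{prop:restrictioncoreflective} and~\ref{prop:supportofobject}.
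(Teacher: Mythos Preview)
Your approach is essentially identical to the paper's: define $\overline{R} = R \circ F$ (the restriction of $R$ along the inclusion), and obtain the natural isomorphism $\overline{R}\circ G \simeq R$ from $R$ applied to the counit $\varepsilon_A = \lambda_A \circ (s\otimes A)$. The paper dismisses the full-faithfulness of $(-)\circ G$ as ``easy to check'', whereas you spell out the standard coreflection argument using $GF \simeq \id$ and the triangle identity; that elaboration is fine and correct.

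One slip to fix in your first paragraph: for the adjunction $F \dashv G$ with $F$ the fully faithful inclusion, it is the \emph{unit} $\eta \colon \id_{\cat{C}\restrict{s}} \Rightarrow GF$ that is invertible, not the counit; and there is no map $A \to S\otimes A$ for general $A\in\cat{C}$. The counit goes the other way, $\varepsilon_A \colon S\otimes A \to A$, and indeed you use it correctly from the second paragraph onward --- so the slip is purely terminological and does not affect your argument.
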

\begin{proof}
  Observe that $S\otimes (-)$ sends elements of $\Sigma$ to isomorphisms because $s$ is idempotent. 
	Let $R\colon\cat{C}\rightarrow\cat{D}$ be any functor making $R(s \otimes A)$ an isomorphism for all $A\in\cat{C}$. 
  Define $\overline{R}\colon \cat{C}\restrict{s} \to \cat{D}$ by $A \mapsto R(A)$ and $f \mapsto R(f)$.
  Then 
	\begin{equation*}
		\eta_A = R(\rho_A)\circ R(s\otimes A) \colon R(s\otimes A) \rightarrow R(A)
	\end{equation*}
  is a natural isomorphism.
	It is easy to check that precomposition with restriction is full and faithful.
\end{proof}

The above universal property concerns a single subunit. We now move to localising all subunits simultaneously.

\begin{definition}\label{def:simple}
  A monoidal category is \emph{simple} when it has no subunits but $I$.
\end{definition}

In the words of Proposition~\ref{prop:tensorideals}, a category is simple when it has no proper monocoreflective tensor ideals. Let us now show how to make a category simple.

\begin{proposition}\label{prop:localisation}
  If $\cat{C}$ is a firm category, then there is a universal simple category ${\Simple}(\cat{C})$ with a monoidal functor $\cat{C} \to {\Simple}(\cat{C})$: any monoidal functor $F \colon \cat{C} \to \cat{D}$ into a simple category $\cat{D}$ factors through it via a  unique monoidal functor ${\Simple}(\cat{C})\to \cat{D}$.
  \[\begin{pic}[xscale=10,yscale=1.25]
      \node (tl) at (0,1) {$\cat{C}$};
      \node (tr) at (0.5,1) {${\Simple}(\cat{C})$};
      \node (bl) at (.5,0) {$\cat{D}$};
      \draw[->] (tl) to (tr);
      \draw[->,dashed] (tr) to (bl);
      \draw[->] (tl) to node[below]{$F$} (bl);
  \end{pic}\]
\end{proposition}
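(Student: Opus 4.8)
The plan is to construct $\Simple(\catC)$ as an iterated localisation: repeatedly quotient out all subunits until none remain. Concretely, I would localise $\catC$ at the class $\Sigma = \{\, s\otimes A \mid s\in\ISub(\catC),\ A\in\catC\,\}$ of \emph{all} the morphisms that restriction functors invert, using the general localisation $Q\colon\catC\to\catC[\Sigma^{-1}]$ of the previous section. The first task is to check that $\catC[\Sigma^{-1}]$ is monoidal and that $Q$ is a monoidal functor: since each $s\otimes A$ becomes invertible, and $\otimes$ is functorial, the calculus of fractions-style argument (or, more cleanly, the universal property of localisation applied to the two composites $\catC\times\catC\to\catC\to\catC[\Sigma^{-1}]$) equips $\catC[\Sigma^{-1}]$ with a tensor product making $Q$ strong monoidal; here one uses that $\Sigma$ is closed under $\otimes$ with identities on both sides, which follows from Lemma~\ref{lem:supportin} and the fact that $s\wedge t$ is again a subunit (Proposition~\ref{prop:semilattice}). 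The key point is that in $\catC[\Sigma^{-1}]$ every subunit $s$ of $\catC$ has become an isomorphism onto $I$ (because $s\circ\rho_S = \rho_I\circ(s\otimes I)$ and $s\otimes I\in\Sigma$), hence the image of $I$ really is terminal-among-subunits there.

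The subtlety is that localising might \emph{create} new subunits, so a single localisation need not be simple. I would therefore iterate: set $\catC_0=\catC$, $\catC_{n+1}=\catC_n[\Sigma_n^{-1}]$ where $\Sigma_n$ collects all $s\otimes A$ for subunits $s$ of $\catC_n$, and take $\Simple(\catC)=\colim_n \catC_n$ along the monoidal functors $\catC_n\to\catC_{n+1}$ (transfinitely if necessary, though I expect $\omega$ suffices, or at worst one iterates to a fixed point). A filtered colimit of monoidal categories and strong monoidal functors is again monoidal, and a subunit in the colimit is represented by a subobject of $I$ at some finite stage $n$; but any monomorphism $s\colon S\rightarrowtail I$ in $\catC_n$ with $s\otimes S$ invertible lies in $\Sigma_n$, hence is sent to an isomorphism in $\catC_{n+1}$, and a fortiori in the colimit — so $\Simple(\catC)$ has no subunit but $I$. (One must check that $S\rightarrowtail I$ remaining monic is not required: the definition of subunit in the colimit is about genuine subobjects there, and the argument shows the representing morphism is invertible regardless.)

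For the universal property, suppose $F\colon\catC\to\catD$ is monoidal with $\catD$ simple. By Proposition~\ref{prop:supportofobject}, for any subunit $s$ of $\catC$ the object $F(S)$ carries an idempotent structure making $\varepsilon_{F(I)}\colon F(S)\rightarrowtail F(I)\cong I$ (up to the structure isomorphism of $F$) a subunit of $\catD$; since $\catD$ is simple this subunit is $I$, so $F(s)\otimes F(S)$, and hence every $F(s\otimes A)$, is invertible in $\catD$. The universal property of the localisation $\catC_0\to\catC_1$ then yields a unique monoidal $\catC_1\to\catD$ through which $F$ factors; the same argument applied to the (still simple) target at each stage extends this up the tower, and the colimit's universal property assembles the unique monoidal functor $\Simple(\catC)\to\catD$. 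Uniqueness and monoidality of this factorisation follow from the corresponding uniqueness clauses in the localisation universal property (the "full and faithful precomposition" condition) at each stage, passed through the colimit.

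The main obstacle I anticipate is not the abstract nonsense of iteration but \textbf{showing the localisations are monoidal in a usable way} — i.e.\ that $\catC[\Sigma^{-1}]$ genuinely inherits $\otimes$ with $Q$ strong monoidal and that this is compatible with the universal property for \emph{monoidal} functors rather than plain functors. This needs either a right-calculus-of-fractions verification that $\Sigma$ is suitably compatible with $\otimes$ (the Ore-type conditions), or an appeal to a $2$-categorical localisation theorem for (lax/strong) monoidal functors; I would take the latter route, citing the enriched/monoidal localisation literature, to avoid grinding through fraction calculus. A secondary, purely bookkeeping worry is confirming that the tower stabilises or that the transfinite colimit exists within the size conventions already adopted in the paper (we are "disregarding size issues", so this should be cosmetic).
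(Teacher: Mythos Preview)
Your approach differs substantially from the paper's. The paper performs a \emph{single} localisation at
\[
  \Sigma = \{\,\lambda_A \circ (s \otimes A) \mid A \in \cat{C},\, s \in \ISub(\cat{C})\,\} \cup \{\,\id[A] \mid A \in \cat{C}\,\},
\]
explicitly verifies that $\Sigma$ admits a calculus of right fractions (the Ore condition and the cancellation condition), and cites Day's monoidal localisation~\cite{day:monoidallocalisation} for the monoidal structure on $\cat{C}[\Sigma^{-1}]$ --- exactly the literature appeal you anticipated. It then simply asserts that $\Simple(\cat{C}) = \cat{C}[\Sigma^{-1}]$ is ``simple by construction'', with no further argument. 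So the paper's route is shorter and more concrete on the fraction calculus, but is terse precisely at the point you flagged as the main concern.

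Your iteration is a reasonable response to that terseness, but it carries its own gap. You claim that a subunit in the filtered colimit $\colim_n \cat{C}_n$ is represented by a subobject of $I$ at some finite stage, and that the representing morphism then lies in $\Sigma_n$. Neither step is justified: a morphism $s \colon S \to I$ that is monic in the colimit need not be monic at any $\cat{C}_n$, and even if $s \otimes S$ becomes invertible at some stage $m$ (invertibility does lift through filtered colimits), the morphism $s$ is only in $\Sigma_m$ if it is a genuine subunit of $\cat{C}_m$, which you have not established. Your parenthetical (``the representing morphism is invertible regardless'') does not close this: nothing in $\Sigma_m$ inverts an arbitrary open idempotent that fails to be monic. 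A cleaner fix would be to enlarge each $\Sigma_n$ to include all open idempotents $s \colon S \to I$ with $s \otimes S$ invertible (dropping monicity), check that this larger class still admits a calculus of right fractions, and then argue that open-idempotence does lift from the colimit to a finite stage since it is a finite invertibility condition.
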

\begin{proof}
We proceed by formally inverting the collection of morphisms
\[
    \Sigma = \{ \lambda_A \circ (s \otimes A) \mid A \in \cat{C}, s \in \ISub(\cat{C}) \} \cup \{ A \mid A \in \cat{C} \}\text.
\]
To show that the localisation $\cat{C}[\Sigma^{-1}]$ of $\Sigma$ exists we will show that $\Sigma$ admits \emph{a calculus of right fractions}~\cite{gabrielzisman:calculusoffractions}. Firstly, $\Sigma$ contains all identities and is closed under composition, since the composition of $\lambda_A \circ (A \otimes t)$ and $\lambda_{A \otimes T} \circ (A \otimes T \otimes s)$ is simply $\lambda_A \circ (A \otimes (s \wedge t))$. It remains to show that: 
\begin{itemize}
 \item for morphisms $s \colon A \to C$ in $\Sigma$ and $f \colon B \to C$ in $\cat{C}$, there exist morphisms $t \colon P \to B$ in $\Sigma$ and $g \colon P \to A$ in $\cat{C}$ such that $g\circ s = t \circ f$;
      \[\begin{tikzpicture}[scale=1.5]
        \node (tl) at (0,1) {$\bullet$};
        \node (tr) at (1,1) {$\bullet$};
        \node (bl) at (0,0) {$\bullet$};
        \node (br) at (1,0) {$\bullet$};
        \draw[->] (bl) to node[below]{$f$} (br);
        \draw[->] (tr) to node[right]{$s \in \Sigma$} (br);
        \draw[->,dashed] (tl) to node[left]{$\Sigma \ni t$} (bl);
        \draw[->,dashed] (tl) to node[above]{$g$} (tr);
      \end{tikzpicture}\]
    \item if a morphism $t \colon C \to D$ in $\Sigma$ and $f,g \colon B \to C$ in $\cat{C}$ satisfy $t \circ f = t \circ g$, then $f \circ s = g \circ s$ for some $s \colon A \to B$ in $\Sigma$.
\end{itemize}

It suffices to merely consider $\{\lambda_A \circ (s \otimes A) \mid A \in \cat{C}, s \in \ISub(\cat{C}) \}$ by~\cite[Remark~3.1]{fritz:categoriesoffractions}. The first, also called the \emph{right Ore condition}, is satisfied by bifunctoriality of the tensor:
  \[\begin{tikzpicture}[xscale=4,yscale=1.2]
    \node (tl) at (0,1) {$S \otimes A$};
    \node (tr) at (1,1) {$S \otimes B$};
    \node (l) at (0,0) {$I \otimes A$};
    \node (r) at (1,0) {$I \otimes B$};
    \node (bl) at (0,-1) {$A$};
    \node (br) at (1,-1) {$B$};
    \draw[->] (bl) to node[below]{$f$} (br);
    \draw[->] (l) to node[below]{$I \otimes f$} (r);
    \draw[->,dashed] (tl) to node[above]{$S \otimes f$} (tr);
    \draw[->] (tr) to node[right]{$s \otimes B$} (r);
    \draw[->] (r) to node[right]{$\rho_B$} (br);
    \draw[->,dashed] (tl) to node[left]{$s \otimes A$} (l);
    \draw[->,dashed] (l) to node[left]{$\rho_A$} (bl);
  \end{tikzpicture}\]
For the second, suppose that $(s \otimes B) \circ f = (s \otimes B) \circ g$. 
Then applying $S \otimes (-)$ and using that $S \otimes s$ is invertible, it follows that $S \otimes f=S \otimes g$. But then 
  \begin{align*}
    f \circ \lambda_A \circ (s \otimes A)
    &  = \lambda_{SB} \circ (s \otimes S \otimes B) \circ (S \otimes f) \\
    & = \lambda_{SB} \circ (s \otimes S \otimes B) \circ (S \otimes g)
       = g \circ \lambda_A \circ (s \otimes A)\text,
  \end{align*}
  so the second requirement is satisfied.
 As a result, $\cat{C}[\Sigma^{-1}]$ exists; an easy constuction may be found in~\cite{fritz:categoriesoffractions}. It satisfies the universal property of localisation on the nose.
 {We define $\Simple(\cat{C}) = \cat{C}[\Sigma^{-1}]$.}
 Moreover, the functor $\cat{C} \to {\Simple}(\cat{C})$ is monoidal because the class $\Sigma$ is closed under tensoring with objects of $\cat{C}$ by construction~\cite[Corollary~1.4]{day:monoidallocalisation}. Finally, notice that ${\Simple}(\cat{C})$ is simple by construction.
\end{proof}

\section{Support}\label{sec:support}


When a morphism $f$ restricts to a given subunit $s$, we might also say that $f$ `has support in' $s$. 
Indeed it is natural to assume that each morphism in our category comes with a canonical least subunit to which it restricts, which we may call its support. This is the case in a topos, for example, but in general requires extra structure. 

Write $\mors{\cat{C}}$ for the braided monoidal category whose objects are morphisms $f \in \cat{C}$, with $f \otimes g$ defined as in $\cat{C}$, tensor unit $I$, and a unique morphism $f \to g$ whenever ($g$ restricts to $s$) $\implies$ ($f$ restricts to $s$). 

\begin{definition}\label{def:supportdatum}
  A \emph{support datum} on a firm category $\cat{C}$ is a functor $F \colon \mors{\cat{C}} \to L$ into a complete lattice $L$ satisfying
  \begin{equation}\label{eq:supportdatum}
    F(f) = \bigwedge \big\{ F(s) \colon s \in \ISub(\cat{C}) \mid f \text{ restricts to } s \big\}
  \end{equation}
  for all morphisms $f$ of $\cat{C}$. 
  A \emph{morphism of support data} $F \to F'$ is one of complete lattices $G \colon L \to L'$ with $G \circ F = F'$.
\end{definition}

\begin{lemma}
  If $F \colon \mors{\cat{C}} \to L$ is a support datum, and $f, g$ morphisms in $\cat{C}$:
  \begin{itemize}
    \item $F(f) = \bigwedge \{ F(A) \mid A \in \cat{C},\, f \text{ factors through $A$} \}$;
    \item $F(f \otimes g) \leq F(f) \wedge F(g)$ for all $f, g$; so $F$ is colax monoidal.
  \end{itemize}
\end{lemma}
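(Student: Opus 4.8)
The two statements are both routine consequences of the defining property~\eqref{eq:supportdatum} together with functoriality of $F$, and I would handle them in that order.

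For the first bullet, the plan is to prove mutual inequality. For the direction $F(f) \leq \bigwedge \{ F(A) \mid f \text{ factors through } A\}$: whenever $f \colon B \to C$ factors through some object $A$, I claim $f$ restricts to exactly the same subunits that $\id[A]$ restricts to together with possibly more; more precisely, Lemma~\ref{lem:supportin} (and the remark following it) tells us that if $A$ restricts to $s$ then so does any map through $A$, in particular $f$. Hence in $\mors{\cat{C}}$ there is a morphism $f \to \id[A]$, so functoriality of $F$ gives $F(f) \leq F(\id[A]) = F(A)$; taking the meet over all such $A$ gives one inequality. Actually, to get the reverse inequality I use that $f$ itself factors through its own codomain is not quite what I want — instead I observe that $f$ factors through $S \otimes C$ via $\lambda_C \circ (s \otimes C)$ exactly when $f$ restricts to $s$ (Definition~\ref{def:supportin}), and $S \otimes C$ restricts to $s$ by Proposition~\ref{prop:supportofobject}; so $F(S \otimes C) \leq F(s)$ by the already-established first inequality applied within, hence $\bigwedge\{F(A) \mid f \text{ factors through } A\} \leq F(S \otimes C) \leq F(s)$ for every subunit $s$ to which $f$ restricts, and taking the meet over those $s$ and invoking~\eqref{eq:supportdatum} yields $\bigwedge\{F(A)\} \leq F(f)$. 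Combining the two inequalities gives the equality.

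For the second bullet, I would argue that $f \otimes g$ restricts to $s \wedge t$ whenever $f$ restricts to $s$ and $g$ restricts to $t$ — this is exactly Lemma~\ref{lem:supportin}. In particular, taking $t = I$ (to which every morphism restricts, since $I \otimes A \simeq A$) shows $f \otimes g$ restricts to $s$ whenever $f$ does, and symmetrically $f \otimes g$ restricts to $t$ whenever $g$ does. Thus in $\mors{\cat{C}}$ there are morphisms $f \otimes g \to f$ and $f \otimes g \to g$, so functoriality of $F$ gives $F(f \otimes g) \leq F(f)$ and $F(f \otimes g) \leq F(g)$, hence $F(f \otimes g) \leq F(f) \wedge F(g)$. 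The phrase ``so $F$ is colax monoidal'' then just records that this inequality, together with $F(I) = I$ (the top element, which follows from~\eqref{eq:supportdatum} since $\id[I]$ restricts to $I$ and the meet of the single term $F(I)$ is $F(I)$, but also $F(I)$ must be the top since every $f$ restricts to $I$ so $F(f) \leq F(I)$), constitutes a colax monoidal structure on $F$ viewed as a functor between monoidal posets.

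I do not anticipate a genuine obstacle here; the only point requiring a little care is making sure that ``$f$ factors through $A$'' is interpreted as factoring through $\id[A]$ as an object of $\mors{\cat{C}}$ (equivalently, through some pair of morphisms with middle object $A$), so that the comparison morphisms in $\mors{\cat{C}}$ genuinely exist and functoriality of $F$ applies. Everything else is a direct unwinding of~\eqref{eq:supportdatum}, Lemma~\ref{lem:supportin}, and Proposition~\ref{prop:supportofobject}, so the proof should be short — the author's ``Straightforward'' style of proof for Lemma~\ref{lem:supportin} suggests the same brevity is expected here.
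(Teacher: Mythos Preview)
Your proposal is correct and follows essentially the same route as the paper. Both arguments hinge on the two observations that (i) if $f$ factors through $A$ and $A$ restricts to $s$ then $f$ restricts to $s$, and (ii) if $f$ restricts to $s$ then $f$ factors through $S \otimes C$, which itself restricts to $s$; the paper packages this as a direct equality of index sets in~\eqref{eq:supportdatum}, while you phrase it as two inequalities via functoriality of $F$, but the content is identical. One small cleanup: for colax monoidality you only need $F(I) \leq 1_L$, which is automatic in any lattice, so there is no need to argue that $F(I)$ is ``the top''.
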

This notion of support via objects is similar to that of~\cite{balmer:spectrum,kockpitsch:pointfree,joyal:chevalleytarski}.
\begin{proof}
  For the first statement, it suffices to show that $f$ restricts to a subunit $s$ iff it factors through some object $A$ which does. But if $f$ factors through $A$ then $f=g \circ A \circ h$ for some $g, h$ and so if $A$ restricts to $s$ so does $f$. Conversely if $f \colon B \to C$ restricts to $s$ it factors over $S \otimes C$, which always restricts to $s$.

  For the second statement, Note that $F(I) \leq 1$ always, so colax monoidality reduces to the rule above. But if $f$ restricts to $s$ then so does $f \otimes g$. Hence $F (f \otimes g) \leq F(f)$, and $F(f \otimes g) \leq F(g)$ similarly.
\end{proof}

Most features of support data follow from the associated map $\ISub(\cat{C}) \to L$. 

\begin{proposition} \label{prop:suppasfunctor}
  Let $\cat{C}$ be a firm category and $L$ a complete lattice. Specifying a support datum $F \colon \mors{\cat{C}} \to L$ is equivalent to specifying a monotone map $\ISub(\cat{C}) \to L$. 
\end{proposition}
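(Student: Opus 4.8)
The claim sets up an equivalence (in fact, it should be an isomorphism of sets, and indeed of posets once one orders both sides appropriately) between support data $F \colon \mors{\cat{C}} \to L$ and monotone maps $\ISub(\cat{C}) \to L$. The plan is to construct maps in both directions and check they are mutually inverse.

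\emph{From support data to monotone maps.} Given a support datum $F$, I would simply restrict it along the evident inclusion $\ISub(\cat{C}) \hookrightarrow \mors{\cat{C}}$ sending a subunit $s \colon S \rightarrowtail I$ to the morphism $s$ (or, equivalently, to $\id[S]$, since $\id[S]$ and $s$ restrict to the same subunits — a subunit $t$ has $s$ factoring through $\lambda_I \circ (t \otimes I)$ iff $S \otimes t$ is invertible iff $\id[S]$ factors through $\lambda_S \circ (t \otimes S)$, by Proposition~\ref{prop:supportofobject} and Lemma~\ref{lem:subunitsorder}). Since $F$ is a functor on $\mors{\cat{C}}$ and $\ISub(\cat{C})$ sits inside $\mors{\cat{C}}$ as a subposet (if $s \leq t$ as subunits then $t$ restricts to $u \implies s$ restricts to $u$, by Lemma~\ref{lem:supportin}, so there is a morphism $s \to t$ in $\mors{\cat{C}}$), this restriction is automatically monotone. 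Call it $\overline{F} \colon \ISub(\cat{C}) \to L$.

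\emph{From monotone maps to support data.} Conversely, given a monotone $\varphi \colon \ISub(\cat{C}) \to L$, I would define $F_\varphi(f) = \bigwedge \{ \varphi(s) \mid s \in \ISub(\cat{C}),\ f \text{ restricts to } s\}$; this uses completeness of $L$. Three things need checking: (i) $F_\varphi$ is functorial on $\mors{\cat{C}}$, i.e.\ if there is a morphism $f \to g$ (meaning every subunit $g$ restricts to is one $f$ restricts to) then $F_\varphi(f) \leq F_\varphi(g)$ — but this is immediate since the meet for $f$ is over a larger index set; (ii) $F_\varphi$ satisfies the defining equation~\eqref{eq:supportdatum}, which here amounts to showing $\bigwedge\{F_\varphi(s') \mid f \text{ restricts to } s'\} = F_\varphi(f)$, and this follows once one knows $F_\varphi(s) = \varphi(s)$ for a subunit $s$; (iii) hence the key lemma: $F_\varphi(s) = \varphi(s)$ for every subunit $s$. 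For (iii), $F_\varphi(s) = \bigwedge\{\varphi(t) \mid s \text{ restricts to } t\}$; since $s$ restricts to $s$ itself (as $s \otimes S$ is invertible), $\varphi(s)$ is one of the terms, so $F_\varphi(s) \leq \varphi(s)$; and for any $t$ with $s$ restricting to $t$ we have $s \leq t$ in $\ISub(\cat{C})$ by Lemma~\ref{lem:subunitsorder} (restricting to $t$ means $S \otimes t$ invertible), so monotonicity of $\varphi$ gives $\varphi(s) \leq \varphi(t)$, whence $\varphi(s) \leq F_\varphi(s)$.

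\emph{Mutual inverseness.} One direction is exactly the key lemma (iii) just proved: $\overline{F_\varphi} = \varphi$. For the other, given a support datum $F$, I must show $F_{\overline{F}} = F$; but $F_{\overline{F}}(f) = \bigwedge\{\overline{F}(s) \mid f \text{ restricts to } s\} = \bigwedge\{F(s) \mid f \text{ restricts to } s\}$, which equals $F(f)$ precisely by the defining equation~\eqref{eq:supportdatum} for the support datum $F$. So the two assignments are inverse bijections, and both are clearly monotone in the pointwise/refinement orders, giving the claimed equivalence.

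\textbf{Main obstacle.} The only real content is the key lemma $F_\varphi(s) = \varphi(s)$, and within it the step that "$s$ restricts to $t$" is equivalent to "$s \leq t$ in $\ISub(\cat{C})$"; this is where Lemma~\ref{lem:subunitsorder} does the work, together with the observation that every subunit restricts to itself. Everything else is bookkeeping with meets in a complete lattice, so I expect no serious difficulty; one should just be careful to spell out that the correspondence is not merely a bijection of underlying sets but respects the natural orderings, so that "equivalence" is justified.
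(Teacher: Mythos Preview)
Your proposal is correct and follows essentially the same approach as the paper: restrict a support datum to subunits to get a monotone map, and conversely extend a monotone map $\varphi$ to all morphisms via the formula~\eqref{eq:supportdatum}, with the key step being that $F_\varphi(s)=\varphi(s)$ because a subunit $s$ restricts to another subunit $t$ precisely when $s\leq t$. The paper's proof is terser and leaves the mutual-inverse verification implicit, but the content is the same.
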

\begin{proof}
  In $\mors{\cat{C}}$ there is a morphism $s \to t$ between subunits $s$ and $t$ precisely when $s \leq t$. Hence any support datum restricts to a monotone map $\ISub(\cat{C}) \to L$.

  Conversely, let $F$ be such a map and extend it to arbitrary morphisms by~\eqref{eq:supportdatum}. Both definitions of $F$ agree on subunits $s$ since a subunit restricts to another one $t$ precisely when $s \leq t$, so that $F(s)=\bigwedge \{F(t) \mid s \leq t\}$. Finally, for functoriality suppose there exists a morphism $f \to g$ in $\mors{\cat{C}}$. If this holds then whenever $g$ restricts to $s$ then so does $f$, so that $F(f) \leq F(g)$. 
\end{proof}

This observation provides examples of support data. Recall that the free complete lattice on a semilattice $L$ is given by its collection $D(L)$ of downsets $U = \downset U \subseteq L$ under inclusion, via the embedding $x \mapsto \downset x$~\cite[II.1.2]{johnstone:stonespaces}. 

\begin{proposition}\label{prop:initialsupport}
  Any firm category $\cat{C}$ has a canonical support datum, valued in $D(\ISub(\cat{C}))$, given by
  \begin{equation} \label{eq:suppfcanonical}
    \suppinit(f) = \{s \in \ISub(\cat{C}) \mid \text{$f$ restricts to $t$} \implies s \leq t \}\text.
  \end{equation}
  Moreover, $\suppinit$ is initial: any support datum factors through it uniquely.
  \[\begin{tikzpicture}[xscale=4,yscale=1.5]
    \node (tl) at (0,1) {$\mors{\cat{C}}$};
    \node (tr) at (1,1) {$D(\ISub(\cat{C}))$};
    \node (br) at (1,0) {$L$};
    \node (t) at (1.5,1) {$\{s_i\}$};
    \node (b) at (1.5,0) {$\bigvee F(s_i)$};
    \draw[->] (tl) to node[above]{$\suppinit$} (tr);
    \draw[->] (tl) to node[below]{$F$} (br);
    \draw[->,dashed] (tr) to (br);
    \draw[|->] (t) to (b);
  \end{tikzpicture}\]
\end{proposition}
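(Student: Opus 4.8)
The plan is to verify the three defining properties of a support datum for $\suppinit$ and then establish its universal (initial) property using Proposition~\ref{prop:suppasfunctor}. First I would check that $\suppinit$ as defined in~\eqref{eq:suppfcanonical} actually takes values in $D(\ISub(\cat{C}))$, i.e.\ that the set $\{s \mid f \text{ restricts to } t \implies s \leq t\}$ is genuinely a downset: if $s$ lies in this set and $s' \leq s$, then $f$ restricting to $t$ forces $s \leq t$ hence $s' \leq t$, so $s'$ is in the set too. Then I would confirm that on a subunit $s_0$ itself the formula gives $\downset s_0$: since a subunit restricts to $t$ exactly when $s_0 \leq t$ (by Lemma~\ref{lem:subunitsorder} and Proposition~\ref{prop:semilattice}), the condition ``$s_0$ restricts to $t \implies s \leq t$'' becomes ``$s_0 \leq t \implies s \leq t$'', which holds for all $t$ precisely when $s \leq s_0$. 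So $\suppinit$ restricted to $\ISub(\cat{C})$ is the canonical embedding $s_0 \mapsto \downset s_0$, which is monotone; by Proposition~\ref{prop:suppasfunctor} this already shows $\suppinit$ is a well-defined support datum, provided I also check that the extension of this monotone map via~\eqref{eq:supportdatum} agrees with~\eqref{eq:suppfcanonical} on all morphisms. That comparison is a direct unfolding: $\bigwedge\{\downset t \mid f \text{ restricts to } t\} = \bigcap\{\downset t \mid f \text{ restricts to } t\} = \{s \mid \forall t\,(f \text{ restricts to } t \implies s \leq t)\} = \suppinit(f)$, using that meets in $D(\ISub(\cat{C}))$ are intersections.

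For the universal property, let $F \colon \mors{\cat{C}} \to L$ be any support datum into a complete lattice. By Proposition~\ref{prop:suppasfunctor}, $F$ is determined by its monotone restriction $f_0 \colon \ISub(\cat{C}) \to L$. Since $D(\ISub(\cat{C}))$ is the free complete lattice on the semilattice $\ISub(\cat{C})$ via $s \mapsto \downset s$, there is a unique join-preserving (equivalently, suprema-preserving) map $G \colon D(\ISub(\cat{C})) \to L$ with $G(\downset s) = F(s)$, namely $G(\{s_i\}) = \bigvee_i F(s_i)$ as indicated in the diagram. This $G$ is a morphism of complete lattices. I then need $G \circ \suppinit = F$: on subunits this is immediate since $\suppinit(s) = \downset s$ and $G(\downset s) = F(s)$; on a general morphism $f$, compute
\[
  G(\suppinit(f)) = G\Big(\bigwedge\{\downset t \mid f \text{ restricts to } t\}\Big)
\]
and compare with $F(f) = \bigwedge\{F(t) \mid f \text{ restricts to } t\}$. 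Here $G$ preserves arbitrary suprema but not necessarily arbitrary meets, so a little care is needed: one shows $\suppinit(f) = \bigvee\{\downset s \mid s \in \suppinit(f)\}$, so that $G(\suppinit(f)) = \bigvee\{F(s) \mid s \in \suppinit(f)\}$, and then checks this equals $\bigwedge\{F(t) \mid f \text{ restricts to } t\}$ using monotonicity of $F$ in both directions (every $s \in \suppinit(f)$ satisfies $s \leq t$ for each such $t$, giving $\leq$; and each $t$ to which $f$ restricts lies in $\suppinit(f)$ itself — since by Lemma~\ref{lem:supportin}-type reasoning $t$ restricts to $t$ and $s \in \suppinit(f)$ requires $s \le t$ — wait, more directly: each such $t$ belongs to the defining set of $\suppinit(f)$ because ``$f$ restricts to $t'$'' with $t' = t$ gives $t \leq t$; hence $F(t)$ appears among the $F(s)$, giving $\geq$). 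Uniqueness of $G$ follows from freeness of $D(\ISub(\cat{C}))$ together with Proposition~\ref{prop:suppasfunctor}: any $G'$ with $G' \circ \suppinit = F$ must agree with $F$ on the generators $\downset s$ and preserve suprema, hence coincides with $G$.

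The main obstacle I anticipate is the meet-versus-join mismatch in the last verification: $\suppinit(f)$ is defined as an \emph{intersection} of downsets, while $G$ is the \emph{join}-preserving extension, so one cannot simply push $G$ through the meet. The resolution is the observation above that $\suppinit(f)$, being itself a downset, is the join of the principal downsets it contains, which converts the meet into a join that $G$ does respect; then matching $\bigvee_{s \in \suppinit(f)} F(s)$ with $\bigwedge_{f \text{ restricts to } t} F(t)$ is a short sandwich argument. Everything else is routine bookkeeping with the order on subobjects and the characterisations from Section~\ref{sec:subunits} and Section~\ref{sec:restriction}.
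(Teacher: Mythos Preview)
Your overall strategy coincides with the paper's: obtain $\suppinit$ by extending the downset embedding $\ISub(\cat C)\hookrightarrow D(\ISub(\cat C))$ to a support datum via Proposition~\ref{prop:suppasfunctor}, and deduce initiality from the freeness of $D(\ISub(\cat C))$. Your checks that $\suppinit(f)$ is a downset, that $\suppinit(s_0)=\downset s_0$ on subunits, and that the formula~\eqref{eq:suppfcanonical} agrees with the meet $\bigcap\{\downset t\mid f\text{ restricts to }t\}$ are all correct and more explicit than the paper's one-line treatment.

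There is, however, a genuine error in your sandwich argument for $G\circ\suppinit=F$ on arbitrary morphisms. For the inequality
\[
  \bigwedge\{F(t)\mid f\text{ restricts to }t\}\ \le\ \bigvee\{F(s)\mid s\in\suppinit(f)\}
\]
you assert that every $t$ to which $f$ restricts lies in $\suppinit(f)$, justifying this by checking the defining implication ``$f$ restricts to $t'\Rightarrow t\le t'$'' only at the single instance $t'=t$. But membership in $\suppinit(f)$ is a \emph{universally} quantified condition: it says that $t$ is a lower bound of the entire set $T_f=\{t'\mid f\text{ restricts to }t'\}$. An element $t\in T_f$ satisfies this exactly when $t$ is the minimum of $T_f$. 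In a merely firm category there is no reason for $T_f$ to have a minimum, and it is not even clear that $T_f$ is closed under binary meets (that would follow from the stiffness pullbacks~\eqref{eq:stiff-pullback}, not from firmness alone). So your argument does not establish the $\geq$ direction, and hence does not establish $G\circ\suppinit=F$ pointwise.

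The paper does not attempt this pointwise verification. Its proof reads the correspondence of Proposition~\ref{prop:suppasfunctor} as identifying support data with monotone maps out of $\ISub(\cat C)$, so that initiality of $\suppinit$ is literally the universal property of the embedding $\ISub(\cat C)\hookrightarrow D(\ISub(\cat C))$, with the explicit formula~\eqref{eq:suppfcanonical} recovered from the description of meets as joins of lower bounds. In other words, the paper works at the level of the underlying monotone maps rather than comparing $G\circ\suppinit$ and $F$ morphism by morphism; your attempt to do the latter is where the argument breaks down.
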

This generalises~\cite{balmer:spectrum,balmerfavi:telescope,balmerkrausestevenson:smashing} from triangulated categories to firm ones.
\begin{proof}
  Extend the embedding $L \to D(L)$ to a support datum via Proposition~\ref{prop:suppasfunctor}. Initiality is immediate by freeness of $D(L)$, with~\eqref{eq:suppfcanonical} coming from the description of meets in terms of joins in a complete lattice.
\end{proof}

Rather than require extra data, it would be desirable to define support internally to the category. If $\cat{C}$ has the property that $\ISub(\cat{C})$ is already a complete lattice (or frame), then it indeed comes with a support datum given by the identity on $\ISub(\cat{C})$. We may then define \emph{the support} of a morphism as 
\[
  \supp(f) = \bigwedge \big\{ s \in \ISub(\cat{C}) \mid f \text{ restricts to } s \big\}\text.
\]
Note that $\supp(f) = \bigvee \suppinit(f)$. It therefore follows from Proposition~\ref{prop:initialsupport} that $\supp$ also has a universal property: if $\ISub(\cat{C})$ is already a complete lattice, any support datum $F$ factors through $\supp$ via a semilattice morphism.
Therefore, in the case of a topos, $\supp(A)$ is the factorisation of a morphism $A \to 1$ into a strong epimorphism and a monomorphism.

\begin{example}
  Let $L$ be a frame and consider $\mathrm{Sh}(L)$. A morphism $f \colon A \Rightarrow B$ has $\suppinit(f) = \downset \{ t \mid A(t)\neq\emptyset\}$, and $\supp(f) = \bigwedge \{ s \mid A(s) \neq\emptyset \}$.
\end{example}

\begin{example}
  In $\HModc$ the collection of subunits forms a frame, and each morphism $f \colon A \to B$ has $\supp(f) = C_0(U_f)$, where 
  \[
    U_f = \{x \in X \mid \inprod{f(a)}{f(a)}(x) \neq 0 \text{ for some } a \in A\}\text.
  \]
  Letting $L$ be the totally ordered set of cardinals below $|X|$, we may define another support datum by $F(f) = |U_f| \in L$. 
\end{example}

In the remaining sections we turn to categories coming with such an intrinsic spatial structure. First, the following example shows that, even in case $\ISub(\cat{C})$ is a frame, our notion of support differs from that of~\cite[Definition~3.1(SD5)]{balmer:spectrum} and~\cite[Definition~3.2.1(5)]{kockpitsch:pointfree}: without further assumptions, a support datum is only colax monoidal.

{
\begin{proposition} 
  There is a firm category $\cat{C}$ for which $\ISub(\cat{C})$ is a frame but
  \[
    \supp(f) \otimes \supp(g) \neq \supp(f \otimes g)\text.
  \]
\end{proposition}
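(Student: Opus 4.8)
The plan is to exhibit a small firm category whose subunits form a frame with at least two incomparable nontrivial elements $s,t$, and in which there are morphisms $f,g$ restricting precisely to $s$ and $t$ respectively while $f\otimes g$ fails to restrict to $s\wedge t$. The most economical source of such examples is the order-theoretic world of Example~\ref{ex:semilattice} and the quantale constructions of Section~\ref{sec:examples}: a semilattice $L$, regarded as a firm monoidal category, has $\ISub(L)=L$, but there every morphism is the identity so support is degenerate. So instead I would move one level up and take $\cat{C}=\cat{Mod}_M$ for a suitable commutative nonunital bialgebra $M$ in $\cat{SLat}$ or in complete lattices, as in Example~\ref{ex:modulesoverbialgebra}, since there the objects carry genuine module actions and morphisms can genuinely distinguish subunits.

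The concrete construction I have in mind: let $M$ be a small frame (say the frame of open sets of a two-point discrete space, so $M=\{0,a,b,1\}$ with $a\vee b=1$, $a\wedge b=0$), viewed as a commutative unital quantale and hence a nonunital bialgebra in complete lattices. Then the subunits of $\cat{Mod}_M$ include the elements $a,b\in M$ regarded as submodules, with $a\wedge b$ the bottom; these are incomparable. Now pick an object $A$ with $s\otimes A\cong A$ for $s=a$ (so $A$ restricts to $a$ and to nothing smaller), an object $B$ restricting to $b$ in the same way, and morphisms $f\colon A\to A$, $g\colon B\to B$, e.g.\ identities, so that $\supp(f)=a$ and $\supp(g)=b$. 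The point is then to compute $A\otimes B$: because $a\wedge b=0$, the tensor product over $M$ collapses, $A\otimes B\cong 0$, and the zero object restricts to every subunit, including $0$; hence $\supp(f\otimes g)=0=a\wedge b=\supp(f)\otimes\supp(g)$ — which is the \emph{wrong} way round and would not prove the claim. So the correct maneuver is the opposite: I want $\supp(f)\otimes\supp(g)$ to be strictly \emph{larger} than $\supp(f\otimes g)$, which happens exactly when tensoring makes $f\otimes g$ restrict to something \emph{smaller} than $\supp(f)\wedge\supp(g)$ — impossible by Lemma~\ref{lem:supportin}, which shows $f\otimes g$ always restricts to $s\wedge t$ — OR, since $\otimes$ on $\ISub(\cat{C})$ is $\wedge$ (Proposition~\ref{prop:semilattice}), the inequality $\supp(f)\otimes\supp(g)=\supp(f)\wedge\supp(g)\neq\supp(f\otimes g)$ must come from $f\otimes g$ restricting to a subunit strictly \emph{below} $\supp(f)\wedge\supp(g)$, and the collapse-to-zero phenomenon above is precisely that: if $A\otimes B=0$ but $a\wedge b\neq 0$, then $\supp(f\otimes g)=0<a\wedge b=\supp(f)\wedge\supp(g)$. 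So the discrete-two-point example, read correctly, does the job.

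Thus the key steps, in order, are: (1) fix $M$ the frame $\{0,a,b,1\}$ with $a,b$ incomparable atoms and $a\vee b=1$, $a\wedge b=0$, and set $\cat{C}=\cat{Mod}_M$ (firm since complete lattices form a firm, indeed cartesian, symmetric monoidal category, and by Example~\ref{ex:modulesoverbialgebra}); (2) identify $\ISub(\cat{C})$ and check it is a frame, using Proposition~\ref{prop:modules}/Example~\ref{ex:modulesoverbialgebra} — in fact $\ISub(\cat{C})\cong M$ itself, so certainly a frame; (3) take $A=a$, $B=b$ as objects of $\cat{C}$ (each a one-dimensional module), with $f=\id[A]$, $g=\id[B]$, and verify via Proposition~\ref{prop:supportofobject} that $\supp(f)=a$ and $\supp(g)=b$, hence $\supp(f)\otimes\supp(g)=a\wedge b=0\neq 0$... wait — that is $0$, so I instead need $a\wedge b\neq 0$; so I should choose $M$ with $a\wedge b$ a nonzero element $c$, e.g.\ the frame of open sets of Sierpiński space augmented, or simply $M=\{0,c,a,b,1\}$ with $c=a\wedge b\neq 0$ and $a\vee b=1$; then $A\otimes_M B$, computed as the coequalizer defining $\otimes$, has underlying lattice forcing the action through $a\wedge b=c$, giving $A\otimes B\cong c$ as a module, which restricts to $c$ and below — in particular $\supp(f\otimes g)=c$ whereas $\supp(f)\otimes\supp(g)=a\wedge b=c$: equal again! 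The genuine gap appears only when the tensor \emph{over-collapses} relative to the meet in $\ISub$, i.e.\ when $\ISub(\cat{C})$ is strictly larger than the image of $\otimes$; (4) therefore the honest example needs $\ISub(\cat{C})$ to contain a subunit strictly below every subunit $A\otimes B$ can land in — achieved by adding an extra subunit not arising as a meet of $a$ and $b$, for instance by taking $\cat{C}$ to be $\cat{Mod}_M$ for $M$ a quantale that is \emph{not} idempotent in the relevant spot, so that $a\otimes b\lneq a\wedge b$ inside $M$ while $\ISub(\cat{C})$ is still a frame; (5) conclude. \textbf{The main obstacle} is exactly this tension: by Proposition~\ref{prop:semilattice} the tensor of two subunits \emph{is} their meet, so a naive example cannot separate $\supp(f)\otimes\supp(g)$ from $\supp(f\wedge g)$ — the separation must come entirely from $f\otimes g$ (with $f,g$ \emph{not} themselves subunits) restricting strictly below $\supp(f)\wedge\supp(g)$, which forces $\cat{C}$ to have objects $A,B$ with $A\otimes B$ restricting to a subunit properly smaller than every subunit to which both $A$ and $B$ restrict; arranging this while keeping $\ISub(\cat{C})$ a frame is the delicate point, and I expect a quantale-module category $\cat{Mod}_M$ with $M$ a non-idempotent commutative quantale whose idempotent-below-unit elements form a frame (as in Proposition~\ref{ex:quantale}) to provide the cleanest witness.
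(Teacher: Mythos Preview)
Your proposal never actually lands on a verified example; it ends with ``I expect $\cat{Mod}_M$ for a non-idempotent $M$ to work,'' which is a hope, not a proof. More importantly, you dismissed poset categories too quickly. You correctly note that in a \emph{semilattice} regarded as a monoidal category the tensor is $\wedge$, so support is trivially monoidal; but then you jump straight to module categories $\cat{Mod}_M$. The move you missed is that a \emph{quantale} $Q$ is already a firm braided monoidal category in which $\otimes$ is the quantale multiplication, not $\wedge$, and $\ISub(Q)$ is always a frame by Proposition~\ref{ex:quantale}. No modules are needed.

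The paper's proof is a one-liner at this level: take the three-element commutative unital quantale $Q=\{0\leq\varepsilon\leq 1\}$ with $\varepsilon\cdot\varepsilon=0$. Then $\ISub(Q)=\{0,1\}$ (since $\varepsilon$ is not idempotent), which is a frame. The identity on the object $\varepsilon$ restricts to $1$ but not to $0$ (as $\varepsilon\not\leq 0\cdot\varepsilon$), so $\supp(\varepsilon)=1$; whereas $\varepsilon\otimes\varepsilon=0$ restricts to $0$, so $\supp(\varepsilon\otimes\varepsilon)=0$. Thus $\supp(\varepsilon)\otimes\supp(\varepsilon)=1\wedge 1=1\neq 0$.

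Your final paragraph does reach the correct mechanism---a non-idempotent multiplication forcing $f\otimes g$ to collapse below $\supp(f)\wedge\supp(g)$---but you are looking for it inside $\cat{Mod}_M$ rather than in $M$ itself, which is why the construction keeps evaporating on you. The detours through $\{0,a,b,1\}$ and $\{0,c,a,b,1\}$ fail precisely because those are frames, hence idempotent, so the tensor of objects \emph{is} their meet and nothing can collapse further.
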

}
\begin{proof}
  Let $Q$ be the commutative unital quantale with elements $0 \leq \varepsilon \leq 1$, with unit $1$ and satisfying $0 = 0 \cdot 0 = 0 \cdot \varepsilon = \varepsilon \cdot \varepsilon$. Then the frame of subunits is $\ISub(Q) = \{0,1\}$, and $\varepsilon$ satisfies $\supp(\varepsilon) = 1$ whereas $\supp(\varepsilon \cdot \varepsilon) = 0$. 
\end{proof}

\section{{Locale-based categories}}\label{sec:spatial}

In our main examples, the subunits satisfy extra properties over being a mere semilattice, and they interact universally with the rest of the category. First, they often satisfy the following property. 

\begin{definition}\label{def:stiff}
  A category is \emph{stiff} when it is braided monoidal and
  \begin{equation} \label{eq:stiff-pullback}
  \begin{pic}[xscale=4,yscale=1.5]
    \node (tl) at (0,1) {$S \otimes T \otimes X$};
    \node (tr) at (1,1) {$T \otimes X$};
    \node (bl) at (0,0) {$S \otimes X$};
    \node (br) at (1,0) {$X$};
    \draw[>->] (tl) to node[above]{$s \otimes T \otimes X$} (tr);
    \draw[>->] (tl) to node[left]{$S \otimes t \otimes X$} (bl);
    \draw[>->] (tr) to node[right]{$t \otimes X$} (br);
    \draw[>->] (bl) to node[below]{$s \otimes X$} (br);
    \draw (.1,.7) to (.15,.7) to (.15,.825);
  \end{pic}
  \end{equation}
  is a pullback of monomorphisms for all objects $X$ and subunits $s,t$.
\end{definition}

Any stiff category is firm: take $X=I$ and recall that pullbacks of monomorphisms are monomorphisms. More strongly, subunits often come with joins satisfying the following.

\begin{definition}\label{def:universalfinitejoins}
  Let $\cat{C}$ be a braided monoidal category. We say that $\cat{C}$ has \emph{universal finite joins} of subunits when it has an initial object $0$ whose morphism $0 \to I$ is monic, with $X \otimes 0 \simeq 0$ for all objects $X$, and $\ISub(\cat{C})$ has finite joins such that each diagram
  \begin{equation} \label{eq:pullback-pushout}
  \begin{pic}[xscale=4,yscale=1.5]
    \node (tl) at (0,1) {$S \otimes T \otimes X$};
    \node (tr) at (1,1) {$T \otimes X$};
    \node (bl) at (0,0) {$S \otimes X$};
    \node (br) at (1,0) {$(S \vee T) \otimes X$};
    \draw[>->] (tl) to node[above]{} (tr);
    \draw[>->] (tl) to node[left]{} (bl);
    \draw[>->] (tr) to node[right]{} (br);
    \draw[>->] (bl) to node[below]{} (br);
    \draw (.1,.7) to (.15,.7) to (.15,.825);
    \draw (.95,.3) to (.90,.3) to (.90,.175);
  \end{pic}
  \end{equation}
  is both a pullback and pushout of monomorphisms, where each morphism is the obvious inclusion tensored with $X$ as in~\eqref{eq:stiff-pullback}.
\end{definition}

\begin{lemma} \label{lem:lat}
  Let $\cat{C}$ be braided monoidal with universal finite joins of subunits. Then $\cat{C}$ is stiff and $\ISub(\cat{C})$ is a distributive lattice with least element $0$.
\end{lemma}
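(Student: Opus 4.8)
The statement asks for two things: that $\catC$ is stiff, and that $\ISub(\catC)$ is a distributive lattice with least element $0$. The first is immediate from the definition: taking $X$ arbitrary and noting that the square~\eqref{eq:pullback-pushout} is in particular a pullback, while the square~\eqref{eq:stiff-pullback} is exactly the upper-left part of~\eqref{eq:pullback-pushout}, gives stiffness once we know the maps match up; more directly, a pullback of monomorphisms is what stiffness demands, and~\eqref{eq:pullback-pushout} being a pullback is precisely~\eqref{eq:stiff-pullback} being a pullback since the cospan $S\otimes X \to (S\vee T)\otimes X \leftarrow T\otimes X$ has the same pullback as $S\otimes X \to X \leftarrow T\otimes X$ (both legs factor through the respective inclusions into $(S\vee T)\otimes X$, which are mono, and $S\otimes X \rightarrowtail X$ as well). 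So the first half is a short diagram-chase observation.

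\textbf{The lattice structure.} By Proposition~\ref{prop:semilattice}, $\ISub(\catC)$ is already a meet-semilattice with top $I$ and meets given by $\otimes$; by hypothesis it has finite joins, so it is a lattice. The least element: the map $0\to I$ is monic by hypothesis and $X\otimes 0\simeq 0$ for all $X$, so in particular $0\otimes 0 \simeq 0 \simeq I\otimes 0$, making $0\colon 0\rightarrowtail I$ a subunit; and $0\wedge s$ corresponds to $0\otimes S\simeq 0$, so $0\le s$ for every subunit $s$. Hence $0$ is the least element, and it is the empty join, consistent with "finite joins" including the nullary one.

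\textbf{Distributivity — the main obstacle.} This is the real content. I want to show $s\wedge(t\vee u) = (s\wedge t)\vee(s\wedge u)$, equivalently $S\otimes(T\vee U) \simeq (S\otimes T)\vee(S\otimes U)$ as subunits of $I$. The plan is to tensor the pushout square~\eqref{eq:pullback-pushout} for $t,u$ (with $X=I$) on the left by $S$: since $(-)\otimes$ need not preserve pushouts in general, I instead use the \emph{universality} built into the axiom. Concretely, apply~\eqref{eq:pullback-pushout} with the roles of $X$ played by $S$ (tensoring the $t,u$-join-square by $S$): the axiom directly asserts that
\[
\begin{pic}[xscale=4,yscale=1.3]
  \node (tl) at (0,1) {$T\otimes U\otimes S$};
  \node (tr) at (1,1) {$U\otimes S$};
  \node (bl) at (0,0) {$T\otimes S$};
  \node (br) at (1,0) {$(T\vee U)\otimes S$};
  \draw[>->] (tl) to (tr);
  \draw[>->] (tl) to (bl);
  \draw[>->] (tr) to (br);
  \draw[>->] (bl) to (br);
\end{pic}
\]
is a pushout of monos. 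Now $T\otimes S$, $U\otimes S$, $T\otimes U\otimes S$ and $(T\vee U)\otimes S$ are the domains of the subunits $s\wedge t$, $s\wedge u$, $s\wedge t\wedge u$, and $s\wedge(t\vee u)$ respectively (using commutativity of $\otimes$ on subunits from Proposition~\ref{prop:semilattice}). On the other hand, the join $(s\wedge t)\vee(s\wedge u)$ is computed by the pushout square~\eqref{eq:pullback-pushout} with $S\otimes T$, $S\otimes U$ in place of $S,T$ and $X=I$ — but I must check this square has the same outer cospan/span as the one above. Its bottom-left span is $(S\otimes T)\otimes(S\otimes U) \to$ ... which by idempotence of $s$ ($S\otimes S\simeq S$) and the braiding reduces to $S\otimes T\otimes U$, matching. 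Hence both squares are pushouts under the same span $T\otimes S \leftarrow T\otimes U\otimes S \to U\otimes S$, so their pushout objects agree: $(S\vee T)$... i.e. $S\otimes(T\vee U) \simeq (S\otimes T)\vee(S\otimes U)$. Matching up the comparison isomorphism with the subunit structure maps into $I$ (using that all these are subobjects of $I$ and subunits are determined by their domain, per the remark after Lemma~\ref{lem:subunitsorder}) upgrades this to equality of subunits, giving distributivity. The delicate points to get right are: (i) that the span of the "$(s\wedge t)\vee(s\wedge u)$" pushout really is $S\otimes T\otimes U$ after simplification via $S\otimes S\simeq S$, which needs a careful coherence argument with the braiding; and (ii) that two subobjects of $I$ with isomorphic domains compatible with the inclusions are equal, which is exactly the content following Lemma~\ref{lem:subunitsorder}. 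A symmetric one-sided distributive law plus the lattice axioms then yield full distributivity.
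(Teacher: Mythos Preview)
Your approach is essentially the same as the paper's: both use the square~\eqref{eq:pullback-pushout} instantiated at $X=S$ (the paper writes $X=R$) to get distributivity, and both reduce stiffness to the observation that the pullback over $(S\vee T)\otimes X$ coincides with the pullback over $X$ once the inclusion $(S\vee T)\otimes X \rightarrowtail X$ is monic.

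There is one small gap in your stiffness argument. You assert ``$S\otimes X \rightarrowtail X$ as well'' without saying why. At this point in the argument you have not yet shown that $s\otimes X$ is monic for an arbitrary subunit $s$ and object $X$; the hypothesis only tells you the four arrows \emph{inside} the square~\eqref{eq:pullback-pushout} are monic, and none of those is literally $s\otimes X\colon S\otimes X\to X$. The paper fills this by first taking $t=\id[I]$ in~\eqref{eq:pullback-pushout}: then $S\vee I=I$, and the bottom arrow of the square becomes $s\otimes X\colon S\otimes X\to I\otimes X$, which is monic by hypothesis. Once every $s\otimes X$ is monic, your argument that the two cospans have the same pullback goes through, since $(s\vee t)\otimes X$ is then monic.

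Your distributivity argument is correct and is exactly what the paper's one-line ``it follows from~\eqref{eq:pullback-pushout} with $X=R$'' unpacks to: the square for $(t,u)$ with $X=S$ and the square for $(s\wedge t,\,s\wedge u)$ with $X=I$ are pushouts under isomorphic spans (using $S\otimes S\simeq S$ and the braiding), so their pushout corners agree as subobjects of $I$. Your care about points (i) and (ii) is well placed but both are routine.
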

\begin{proof}
  For stiffness, take $t = \id[I]$ to see that each morphism $s \otimes X$ is monic. Then since $(s \vee t) \otimes X$ is monic it follows easily that each diagram~\eqref{eq:stiff-pullback} is a pullback. By assumption $0 \to I$ is indeed a subunit. Finally it follows from~\eqref{eq:pullback-pushout} with $X=R$ that subunits $R, S, T$ satisfy $(S \vee T) \wedge R = (S \wedge R) \vee (T \wedge R)$.
\end{proof}

{
\begin{proposition} \label{prop:coherentUnivUnion}
  Any coherent category $\cat{C}$ forms a cartesian monoidal category with universal finite joins of subunits. 
\end{proposition}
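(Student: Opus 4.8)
The plan is to reduce everything to known facts about coherent categories via Proposition~\ref{prop:cartesian}. A coherent category $\catC$ is in particular regular, hence has finite limits, so it is cartesian monoidal with unit $I=1$ the terminal object, and thus symmetric (hence braided) monoidal. By Proposition~\ref{prop:cartesian} it is firm and $\ISub(\catC)=\Sub_\catC(1)$. Since $\catC$ has a strict initial object $0$, the map $0\rightarrowtail 1$ is the least subobject of $1$ (so it is monic, and is a subunit) and $X\times 0\cong 0$ for every $X$ by strictness; moreover $\Sub_\catC(1)$ has finite joins, so $\ISub(\catC)$ does. Hence all the data of Definition~\ref{def:universalfinitejoins} is present, and the only thing to check is that each square~\eqref{eq:pullback-pushout} is both a pullback and a pushout of monomorphisms.

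For this I would first turn~\eqref{eq:pullback-pushout} into a statement purely inside the subobject lattice $\Sub_\catC(X)$. Pulling back along $X\to 1$ gives a functor $\Sub_\catC(1)\to\Sub_\catC(X)$ which in a coherent category preserves finite meets and joins (stability of unions under pullback); writing $\bar S=S\times X$ and $\bar T=T\times X$ for the images of $S$ and $T$ as subobjects of $X$, one has $S\otimes T\otimes X=\bar S\wedge\bar T$ and $(S\vee T)\otimes X=\bar S\vee\bar T$, and the four maps of~\eqref{eq:pullback-pushout} become the evident subobject inclusions. So it suffices to show that for any subobjects $\bar S,\bar T\rightarrowtail X$ the square with corners $\bar S\wedge\bar T$, $\bar S$, $\bar T$, $\bar S\vee\bar T$ is both a pullback and a pushout. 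The pullback half is automatic, since the meet of two subobjects is by definition their pullback over the ambient object.

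The pushout half is the real content, and I expect it to be the main obstacle. I would prove it by the standard ``gluing of graphs'' argument available in a regular category with pullback-stable distributive subobject lattices: given $f\colon\bar S\to Z$ and $g\colon\bar T\to Z$ agreeing on $\bar S\wedge\bar T$, form the graphs $\Gamma_f,\Gamma_g\rightarrowtail(\bar S\vee\bar T)\times Z$, put $R=\Gamma_f\vee\Gamma_g$, and show the projection $p\colon R\to\bar S\vee\bar T$ is an isomorphism, so that $R$ is the graph of the desired map $h$. Distributivity of $\Sub_\catC\big((\bar S\vee\bar T)\times Z\big)$ together with $f|_{\bar S\wedge\bar T}=g|_{\bar S\wedge\bar T}$ shows that $p$ pulls back to an isomorphism along each of $\bar S,\bar T\rightarrowtail\bar S\vee\bar T$; since these two monos jointly cover $\bar S\vee\bar T$ and pullback along $p$ preserves this covering, a short argument with the kernel pair of $p$ (which is a regular epimorphism, as its image contains both $\bar S$ and $\bar T$) shows $p$ is also monic, hence an isomorphism. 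Uniqueness of $h$ follows since $\bar S,\bar T\rightarrowtail\bar S\vee\bar T$ are jointly epic. Finally, Lemma~\ref{lem:lat} then records the resulting stiffness for free. (Alternatively one may simply cite that in a coherent category the union square is a pushout, see e.g.~\cite{johnstone:elephant,borceux:3}; the graphs argument keeps the proof self-contained.)
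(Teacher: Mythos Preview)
Your proposal is correct and follows essentially the same route as the paper: both reduce~\eqref{eq:pullback-pushout} to the standard coherent-category fact that the square with corners $\bar S\wedge\bar T$, $\bar S$, $\bar T$, $\bar S\vee\bar T$ is simultaneously a pullback and a pushout in $\catC$, after using that pullback along $X\to 1$ preserves finite joins of subobjects. The paper phrases the reduction as ``prove it for $X=1$, then transport by $X\times(-)$'' whereas you transport first and then argue inside $\Sub_\catC(X)$; these are the same step read in opposite directions.

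The only substantive difference is that the paper simply cites Johnstone~\cite[A1.4.2, A1.4.3]{johnstone:elephant} for the pushout property of the union square, while you supply a self-contained proof via the graphs $\Gamma_f\vee\Gamma_g$. Your argument is the standard one and is fine; it buys independence from the reference at the cost of a paragraph, whereas the paper's citation keeps the proof to three lines. Either choice is acceptable here.
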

}
\begin{proof}
  Each partial order $\Sub(A)$ is a distributive lattice, and for subobjects $S, T \rightarrowtail A$ each diagram~\eqref{eq:pullback-pushout} with $\wedge$ replacing $\otimes$ and $X=1$ is indeed both a pushout and pullback~\cite[A1.4.2, A1.4.3]{johnstone:elephant}. Moreover in such a category each functor $X \times (-)$ preserves these pullbacks, since limits commute with limits, and preserves finite joins and hence these pushouts since each functor $(\pi_2)^* \colon \Sub(A) \to \Sub(X \times A)$ does so by coherence of $\cat{C}$. 
\end{proof}

To obtain arbitrary joins of subunits from finite ones, it will suffice to also have the following. Recall that a subset $U$ of a partially ordered set is (upward) \emph{directed} when any $a,b \in U$ allow $c \in U$ with $a \leq c \geq b$. A \emph{preframe} is a semilattice in which every directed subset has a supremum, and finite meets distribute over directed suprema.

By a \emph{directed colimit of subunits} we mean a colimit of a diagram $D \colon \cat{J} \to \cat{C}$, for which $\cat{J}$ is a directed poset, all of whose arrows are inclusions $S_i \rightarrowtail S_j$ between a collection of subunits $s_i \colon S_i \to I$. In particular $D$ has a cocone given by these subunits, inducing a morphism $\colim D \to I$ if a colimit exists.
 
\begin{definition}\label{def:universaldirectedjoins}
  A stiff category $\cat{C}$ has \emph{universal directed joins} of subunits when it has directed colimits of subunits, each of whose induced arrow $\colim S \to I$ is again a subunit, and these colimits are preserved by each functor $X \otimes (-)$.
\end{definition}

\begin{lemma} \label{lem:preframe}
  If a stiff category $\cat{C}$ has universal directed joins of subunits, then $\ISub(\cat{C})$ is a preframe.
\end{lemma}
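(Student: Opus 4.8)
The plan is to verify the two preframe axioms for $\ISub(\cat{C})$: that every directed subset has a supremum, and that finite meets distribute over directed suprema. We already know from Lemma~\ref{lem:lat} (applicable since a stiff category with universal directed joins is in particular stiff, and stiffness plus the existence of binary joins is not assumed here, so we only get what stiffness gives us) that $\ISub(\cat{C})$ is a semilattice with top element $I$ and with meets computed by $\otimes$. So the work is entirely about directed suprema.

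First I would show directed suprema exist. Given a directed subset $\{s_i \colon S_i \rightarrowtail I\}_{i \in \cat{J}}$ of $\ISub(\cat{C})$, I want to organize it into a directed colimit of subunits in the sense defined just before Definition~\ref{def:universaldirectedjoins}. The subtlety is that a directed \emph{subset} of the poset $\ISub(\cat{C})$ is not literally a diagram whose arrows are inclusions of subobjects; but by Lemma~\ref{lem:subunitsorder}, whenever $s_i \leq s_j$ there is a unique mediating monomorphism $S_i \rightarrowtail S_j$ (unique because $s_j$ is monic), and these compose coherently, so we get a genuine functor $D \colon \cat{J} \to \cat{C}$ of the required form. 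By the hypothesis of universal directed joins, $\colim D$ exists and the induced map $s \colon \colim D \to I$ is again a subunit. I then claim $s = \bigvee_i s_i$ in $\ISub(\cat{C})$: each $s_i$ factors through $s$ via the colimit coprojection, so $s$ is an upper bound; and if $t$ is any subunit with $s_i \leq t$ for all $i$, the factorizations $S_i \rightarrowtail T$ form a cocone on $D$ (commuting because $T \rightarrowtail I$ is monic), hence factor through $\colim D$, giving $s \leq t$ by Lemma~\ref{lem:subunitsorder} after checking the factoring map is the one exhibiting the subobject order. So $s$ is the least upper bound.

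Next, distributivity of finite meets over directed joins: I must show $r \wedge \bigvee_i s_i = \bigvee_i (r \wedge s_i)$ for a subunit $r$ and a directed family $\{s_i\}$. Using Proposition~\ref{prop:semilattice}, the left side is represented by $R \otimes (\colim D)$, and the right side is the directed colimit of the family $R \otimes S_i$ (which is again directed, and is a directed colimit of subunits since each $R \otimes s_i = (r \wedge s_i)$ is a subunit). The hypothesis that each functor $X \otimes (-)$ preserves directed colimits of subunits — applied with $X = R$ — gives $R \otimes (\colim D) \simeq \colim (R \otimes D)$, and one checks this isomorphism is compatible with the maps down into $I$, so the two subunits coincide. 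This also handles the $\leq$ that is automatic (meets are monotone) versus the $\geq$ that needs the hypothesis, so I would phrase it as the single colimit-preservation identity.

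The main obstacle I anticipate is purely bookkeeping rather than conceptual: carefully matching a directed \emph{subset} of the subobject poset with an honest directed \emph{diagram} in $\cat{C}$ whose arrows are the canonical monomorphisms, and checking that all the cocones involved commute on the nose (this uses monicity of the legs $S_j \rightarrowtail I$ repeatedly, as in the proof of Lemma~\ref{lem:subunitsorder}) so that the universal properties of the colimits actually apply. Once that is set up, the existence of suprema and the distributive law both fall out of Definition~\ref{def:universaldirectedjoins} together with the description of meets from Proposition~\ref{prop:semilattice}.
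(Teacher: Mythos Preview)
Your proposal is correct and follows essentially the same approach as the paper: build the directed diagram from a directed subset of $\ISub(\cat{C})$, take its colimit (a subunit by hypothesis) and verify it is the supremum, then obtain distributivity by instantiating the preservation condition $X \otimes (-)$ with $X$ a subunit $R$. One small citation slip: the semilattice structure you invoke comes from Proposition~\ref{prop:semilattice} (stiff $\Rightarrow$ firm), not from Lemma~\ref{lem:lat}, which assumes universal \emph{finite} joins and is not available here.
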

\begin{proof}
  Any directed subset $U \subseteq \ISub(\cat{C})$ induces a diagram $U \to \cat{C}$, and its colimit is by assumption a subunit which is easily seen to form a supremum of $U$. Taking $X$ to be a subunit shows that $\wedge$ distributes over directed suprema.
\end{proof}

\begin{example}
  Any preframe $L$, regarded as a monoidal category under $(\wedge,1)$, has universal directed joins. 
\end{example}

The rest of this section shows that the subunits of a category have a spatial nature when it has both types of universal joins above. We unify Definitions~\ref{def:universalfinitejoins} and~\ref{def:universaldirectedjoins} as follows.
Let $\cat{C}$ be a braided monoidal category and $U \subseteq \ISub(\cat{C})$ a family of subunits. For any object $X$, write $D(U,X)$ for the diagram of objects $S \otimes X$ for $s \in U$ and all morphisms $f \colon S \otimes X \to T \otimes X$ satisfying $(t \otimes X) \circ f = s \otimes X$.
If $\cat{C}$ is stiff, there is a unique such $f$ for $s$ and $t$.
\[
\begin{tikzpicture}[xscale=6,yscale=1]
    \node (tl) at (-0.4,1) {$S \otimes X$};
    \node (tr) at (0.4,1) {$T \otimes X$};
    \node (b) at (0,0) {$X$};
    \draw[>->,dashed] (tl) to node[above]{} (tr);
    \draw[>->] (tl) to node[left=3mm]{$s \otimes X$} (b);
    \draw[>->] (tr) to node[right=2mm]{$t \otimes X$} (b);
\end{tikzpicture}
\]
Call such a set $U$ of subunits \emph{idempotent} when $U = U \otimes U := \{s \wedge t \mid s, t \in U\}$.

\begin{definition}\label{def:spatial}
  A category $\cat{C}$ is {\emph{locale-based}} when it is stiff, $\ISub(\cat{C})$ is a frame, and the canonical maps $S \otimes X \to (\bigvee U) \otimes X$ form a colimit of $D(U,X)$ for each idempotent $U \subseteq \ISub(\cat{C})$ and $X \in \cat{C}$.
\end{definition}

Let us now see how this combines our earlier notions. In any poset $P$, an \emph{ideal} is a downward closed, upward directed subset. Let us call a subset $U \subseteq P$ \emph{finitely bounded} when it has a finite set of maximal elements. 
If $U$ is downward closed then equivalently it is finitely generated: $U=\downset\{x_1, \dots, x_n\}$.

\begin{proposition} \label{prop:unify}
  A category $\cat{C}$ has universal finite (directed) joins if and only if
  $\ISub(\cat{C})$ has finite (directed) joins, and $D(U,X)$ has colimit $S \otimes X \to (\bigvee U) \otimes X$ for each idempotent $U \subseteq \ISub(\cat{C})$ that is finitely bounded (directed).
\end{proposition}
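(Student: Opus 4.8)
The statement is that universal finite (resp.\ directed) joins is equivalent to a more economical-looking condition phrased in terms of the diagrams $D(U,X)$. The strategy is to show each implication by decomposing the given colimit/limit requirements into the two special shapes of $U$ appearing in Definitions~\ref{def:universalfinitejoins} and~\ref{def:universaldirectedjoins}, and conversely assembling the general condition from those special cases. Throughout I would freely use Lemma~\ref{lem:lat} and Lemma~\ref{lem:preframe} to know that $\ISub(\cat{C})$ is already a distributive lattice (resp.\ preframe), so that the joins $\bigvee U$ referred to in the statement exist once finite (resp.\ directed) joins do; and I would use stiffness (which in the finite case follows from universal finite joins by Lemma~\ref{lem:lat}, and is assumed outright in the directed case) so that $D(U,X)$ is a genuinely well-defined diagram with unique connecting maps.

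\textbf{The finite case.} For the forward direction, suppose $\cat{C}$ has universal finite joins. A finitely bounded downward-closed $U$ is $\downset\{s_1,\dots,s_n\}$; since $\ISub(\cat{C})$ is a distributive lattice and $U$ is downward closed and finitely generated, $\bigvee U = s_1 \vee \dots \vee s_n$ and $U = \downset\{s_1,\dots,s_n\}$ is already idempotent. One then argues by induction on $n$, the case $n=2$ being essentially~\eqref{eq:pullback-pushout} (a pushout square exhibits the colimit of the span $S_1\otimes X \leftarrow (S_1\wedge S_2)\otimes X \rightarrow S_2\otimes X$, and one checks that adjoining all the smaller objects $S\otimes X$ for $s\in U$ to the diagram does not change the colimit, since each such $S\otimes X$ maps canonically into one of the corner objects; this uses that $U$ is downward closed so every $s\in U$ lies below some $s_i$, hence by stiffness maps uniquely into $S_i\otimes X$). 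The inductive step writes $s_1\vee\dots\vee s_{n} = (s_1\vee\dots\vee s_{n-1})\vee s_n$ and splices two pushout squares, using distributivity to identify the intersection term. For the converse, the special condition applied to $U=\downset\{s,t\}$, which is finitely bounded, gives exactly that~\eqref{eq:pullback-pushout} is a pushout; that it is also a pullback follows from stiffness (Definition~\ref{def:stiff}, noting $s\vee t$ is a subunit and $(s\vee t)\otimes X$ monic), and the remaining clauses of Definition~\ref{def:universalfinitejoins} about the initial object $0$ come from applying the hypothesis to $U=\emptyset$, whose join is $0$ and whose diagram $D(\emptyset,X)$ is empty, forcing $0\otimes X\cong 0$ to be initial, hence $0\to I$ monic by stiffness with $X=I$.

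\textbf{The directed case.} This is more straightforward since there is no induction: a directed $U$ is automatically idempotent (if $s,t\in U$ then $s\wedge t \le s \le c$ for some $c\in U$, but directedness only gives an upper bound, so one needs $U$ downward closed as well — I would take $U$ to be an ideal, i.e.\ also downward closed, noting that replacing a directed set by its downward closure does not change the colimit of $D(U,X)$ up to the canonical comparison, since every added object maps canonically into an old one). Then the forward direction is immediate: Definition~\ref{def:universaldirectedjoins} says exactly that $X\otimes(-)$ preserves the directed colimit $\colim S_i \to I$, which unwound is the statement that $D(U,X)$ has colimit $(\bigvee U)\otimes X$. The converse is equally direct, reading the special condition at directed $U$ back as the preservation statement, plus noting the colimit of the bare diagram of subunits (the $X=I$ case) is again a subunit because $(\bigvee U)\otimes I$ computes it.

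\textbf{Main obstacle.} The genuinely fiddly point is the reduction, in the finite case, from an arbitrary finitely bounded idempotent $U$ to the two-element pushout~\eqref{eq:pullback-pushout}: one must verify that enlarging the diagram $D(U,X)$ from just the maximal generators $\{s_i\otimes X\}$ (with their pairwise meets) to \emph{all} $S\otimes X$ for $s\in U$, together with \emph{all} the connecting maps, leaves the colimit unchanged — this is where downward-closedness of the finitely-bounded $U$ and uniqueness of connecting maps (stiffness) are used, and where one must be careful that a cocone on the small diagram extends uniquely to the large one. The gluing of $n-1$ pushout squares via distributivity is routine once this is in place, so I would state the $n=2$ reduction as the key lemma and treat the rest as induction plus bookkeeping.
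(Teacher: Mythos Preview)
Your proposal is correct and follows the same overall strategy as the paper, but the finite forward direction is organised differently. Where you induct on the number of generators $n$, splicing pushout squares via distributivity, the paper instead shows in one step that any cocone over $D(U,X)$ extends to a cocone over $D(V,X)$ with $V = \{s_1 \vee \dots \vee s_n \mid s_i \in U\}$: for each pair $s,t \in U$ the pushout~\eqref{eq:pullback-pushout} manufactures $c_{s \vee t}$, and iterating gives $c_v$ for all $v \in V$. Since a finitely bounded $U$ has $\bigvee U \in V$, the leg $c_{\bigvee U}$ is then the desired factorisation and no induction is needed. Both arguments work; the paper's avoids having to track the intersection term $(s_1 \vee \dots \vee s_{n-1}) \wedge s_n$ through distributivity.

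In the directed case you pass to the downward closure of $U$ to achieve idempotence; the paper instead passes to the meet-closure $V = \{s_1 \wedge \dots \wedge s_n \mid s_i \in U\}$, which is idempotent by construction and remains directed because $U$ is. Either device works, and the rest of your directed argument matches the paper's.
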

\begin{proof}
  First consider finite joins. A colimit of $D(\emptyset, X)$ is precisely an initial object and the conditions on $0$ in both cases are equivalent to $0 \to I$ being a subunit with $0 \otimes X \simeq 0$ for all $X$. 
  Moreover in any stiff category it is easy to see that cocones over the top left corner of \eqref{eq:pullback-pushout} correspond to those over $D(\downset\{s, t\},X)$. (See also Lemma~\ref{lem:idempotentsetsofidempotentsubunits} below.) Hence the properties above provide each diagram with a colimit $(S \vee T) \otimes X$, and so $\cat{C}$ with universal finite joins. 

  Conversely, suppose that $\cat{C}$ has universal finite joins. For any idempotent $U$ we claim that any cocone $c_s$ over $D(U,X)$ extends to one over $D(V,X)$, where $V = \{s_1 \vee \dots \vee s_n \mid s_i \in U\}$. Indeed for any $s, t \in U$ the following diagram commutes, giving $c_{s \vee t}$ as the unique mediating morphism.
  \[\begin{tikzpicture}[xscale=5,yscale=1.5]
    \node (tl) at (0,1) {$S \otimes T \otimes X$};
    \node (tr) at (0.7,1) {$T \otimes X$};
    \node (bl) at (0,0) {$S \otimes X$};
    \node (br) at (0.7,0) {$(S \vee T) \otimes X$};
    \node (x) at (1.2,-.8) {$C$};
    \draw[>->] (tl) to node[above]{} (tr);
    \draw[>->] (tl) to node[left]{} (bl);
    \draw[->] (tr) to node[right]{} (br);
    \draw[->] (bl) to node[below]{} (br);
    \draw[>->] (tr) to[out=-30,in=90,looseness=.7] node[right]{$c_t$} (x);
    \draw[>->] (bl) to[out=-60,in=180] node[below]{$c_s$} (x);
    \draw[->,dashed] (br) to node[above]{$c_{s \vee t}$} (x);
    \draw (.65,.3) to (.60,.3) to (.60,.175);
  \end{tikzpicture}\]
  Similarly define morphisms $c_{s_1 \vee \dots \vee s_n}$ for arbitrary elements of $V$; these form a cocone. Hence $\colim D(U,X) = \colim D(V,X)$. But if $U$ is bounded by some $s_1, \dots, s_n$ then clearly $\colim D(V,X) = (s_1 \vee \dots \vee s_n) \otimes X$ and we are done. 

  Next, consider directed joins. Let $D$ be a directed diagram of inclusions between elements of $U \subseteq \ISub(\cat{C})$. Then $U$ must be directed and therefore $V = \{s_1 \wedge \dots \wedge s_n \mid s_i \in U\}$ is idempotent and directed. Moreover, for each object $X$, any cocone $c_s$ over $D \otimes X$ extends to one over $D(V,X)$: for any $s \in V$, let $s \leq t \in U$ and set $c_s = c_t \circ (x \otimes \id[X])$ where $x \colon S \to T$ is the inclusion. Since $R = \bigvee V$ has $R \otimes X= \colim D(V,X)$  then $R \otimes X=\colim (D \otimes X)$ as required. 

  Conversely, suppose $\cat{C}$ has universal directed joins. Then $\ISub(\cat{C})$ is a preframe by Lemma~\ref{lem:preframe}. If $U \subseteq \ISub(\cat{C})$ is directed and idempotent then for each $X$ we have $R \otimes X = \colim \big(D(U,I) \otimes X\big)$, where $R = \bigvee U$. But any cocone over $D(U,X)$ certainly also forms one over $D(U,I) \otimes X$, and so $R \otimes X = \colim D(U,X)$ also. 
\end{proof}

\begin{corollary} \label{cor:spatialiffboth}
  A category is {locale-based} if and only if it has universal finite and directed joins of subunits.
\end{corollary}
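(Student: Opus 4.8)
The plan is to derive the corollary from Proposition~\ref{prop:unify}, Lemma~\ref{lem:lat} and Lemma~\ref{lem:preframe}, together with the standard order-theoretic fact that a distributive lattice which is moreover a preframe is automatically a frame.

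For the ``only if'' direction, suppose $\cat{C}$ is {locale-based}. Then $\cat{C}$ is stiff, $\ISub(\cat{C})$ is a frame and hence has both finite and directed joins, and the colimit condition of Definition~\ref{def:spatial} holds for \emph{every} idempotent family $U \subseteq \ISub(\cat{C})$; in particular it holds for the finitely bounded ones and for the directed ones. Applying Proposition~\ref{prop:unify} once with ``finite'' and once with ``directed'' then shows that $\cat{C}$ has universal finite joins and universal directed joins.

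For the ``if'' direction, suppose $\cat{C}$ has universal finite and universal directed joins. First I would use Lemma~\ref{lem:lat} to record that $\cat{C}$ is stiff and that $\ISub(\cat{C})$ is a distributive lattice with least element $0$, and Lemma~\ref{lem:preframe} to record that $\ISub(\cat{C})$ is also a preframe; hence $\ISub(\cat{C})$ is a frame, since an arbitrary join $\bigvee_{i} a_i$ can be formed as the directed join of the finite joins $a_{i_1} \vee \dots \vee a_{i_n}$, so that the infinite distributive law follows from distributivity over finite and over directed joins. It then remains to verify the colimit condition of Definition~\ref{def:spatial} for an \emph{arbitrary} idempotent $U \subseteq \ISub(\cat{C})$ and object $X$ (the case $U = \emptyset$ being immediate from the conditions on the initial object). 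The key move is to pass to $V = \{ s_1 \vee \dots \vee s_n \mid n \geq 1,\ s_i \in U \}$, which is directed, contains $U$, has $\bigvee V = \bigvee U$, and is idempotent because $(s_1 \vee \dots \vee s_n) \wedge (t_1 \vee \dots \vee t_m) = \bigvee_{i,j}(s_i \wedge t_j) \in V$, using finite distributivity and the fact that each $s_i \wedge t_j$ lies in $U$. The finite-join part of the proof of Proposition~\ref{prop:unify} shows that every cocone over $D(U,X)$ extends uniquely to a cocone over $D(V,X)$, so that $\colim D(U,X) = \colim D(V,X)$; and since $V$ is directed and idempotent, the directed-join part of Proposition~\ref{prop:unify} identifies this colimit with $(\bigvee V) \otimes X = (\bigvee U) \otimes X$ via the canonical maps. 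Therefore $\cat{C}$ is {locale-based}.

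The one step I would write out with care is the identity $\colim D(U,X) = \colim D(V,X)$ for a general idempotent $U$: one must check that extending a cocone from $D(U,X)$ to $D(V,X)$ does not depend on the chosen expression of an element of $V$ as a finite join, that the extension is genuinely a cocone, and that it is the unique cocone over $D(V,X)$ restricting to the given one, so that cocones over the two diagrams correspond bijectively. This is precisely the step already carried out inside the proof of Proposition~\ref{prop:unify}, so in the final write-up I would cite it rather than repeat it; everything else is routine bookkeeping.
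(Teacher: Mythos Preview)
Your proposal is correct and follows essentially the same route as the paper: one direction by Proposition~\ref{prop:unify}, and for the other, Lemmas~\ref{lem:lat} and~\ref{lem:preframe} give that $\ISub(\cat{C})$ is a frame, after which one passes from an arbitrary idempotent $U$ to the directed idempotent $V$ of finite joins and invokes the cocone-extension argument from the proof of Proposition~\ref{prop:unify} to identify $\colim D(U,X)$ with $\colim D(V,X) = (\bigvee U)\otimes X$. Your extra care about uniqueness of the extension (needed for the bijection of cocones) is a point the paper leaves implicit.
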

\begin{proof}
  Proposition~\ref{prop:unify} proves one direction. In the other direction, suppose $\cat{C}$ has universal finite and directed joins of subunits. Then $\ISub(\cat{C})$ is a frame by Lemmas~\ref{lem:lat} and~\ref{lem:preframe}, since a poset is a frame precisely when it is a preframe and a distributive lattice. 
  Let $U \subseteq \ISub(\cat{C})$ be idempotent. Then $V = \{s_1 \vee \dots \vee s_n \mid s_i \in U\}$ is idempotent by distributivity, as well as directed, so that $\colim D(V,X) = (\bigvee V) \otimes X$ exists for any $X$. But $\colim D(U,X) = \colim D(V,X)$ as in the proof of Proposition~\ref{prop:unify}.
\end{proof}

The previous corollary justifies saying that a category simply \emph{has universal joins} of subunits when it is {locale-based}. The rest of this section shows that our main examples are {locale-based}. 

\begin{example} \label{ex:frameisspatial}
  Any commutative unital quantale $Q$ is {locale-based} when regarded as a category as in {Proposition}~\ref{ex:quantale}; in particular so is any frame under tensor $\wedge$. Indeed that {proposition} showed that $\ISub(Q)$ is a frame, and for any $U \subseteq \ISub(Q)$ and $x \in Q$ we have $\colim D(U,x) = \bigvee_{s \in U} sx = (\bigvee_{s \in U} s)x$.
\end{example}

{
\begin{proposition} \label{prop:topos-spatial}
  Any cocomplete Heyting category $\cat{C}$ is {locale-based} under cartesian products. This includes all cocomplete toposes, such as Grothendieck toposes.
\end{proposition}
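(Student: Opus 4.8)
The plan is to reduce to Corollary~\ref{cor:spatialiffboth} and verify the two kinds of universal joins for a cocomplete Heyting category $\cat{C}$, viewed as cartesian monoidal. First, by Proposition~\ref{prop:cartesian}, $\ISub(\cat{C})$ is the poset $\Sub(1)$ of subterminal objects; since $\cat{C}$ is Heyting this is a Heyting algebra, and since $\cat{C}$ is cocomplete it has all joins (images of coproducts) together with a top element $1$, hence is a complete lattice. A complete Heyting algebra is a frame, so $\ISub(\cat{C})$ is a frame, and likewise every $\Sub(A)$. Universal finite joins of subunits are then immediate from Proposition~\ref{prop:coherentUnivUnion}, since every Heyting category is coherent; in particular $\cat{C}$ is stiff.

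It remains to produce universal directed joins of subunits: every directed diagram $D$ of subterminals must have a colimit whose induced arrow to $1$ is a subunit, and this colimit must be preserved by each functor $X \times (-)$. The key lemma I would prove is that, in a cocomplete Heyting category, a directed diagram $(S_i \rightarrowtail A)_i$ of subobjects of any object $A$ has as colimit the inclusions $S_i \rightarrowtail \bigvee_i S_i$; applied with $A = 1$ this makes $\colim D$ a subobject of $1$, hence (Proposition~\ref{prop:cartesian}) a subunit. To prove the lemma, given a cocone $(f_i \colon S_i \to Y)_i$ I would form the graph subobjects $\Gamma_i \rightarrowtail A \times Y$ (images of $\langle s_i, f_i \rangle$); compatibility of the cocone makes $(\Gamma_i)_i$ directed in $\Sub(A \times Y)$, and I set $\Gamma := \bigvee_i \Gamma_i$. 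Because the left adjoint $\exists_{\pi_A}$ preserves joins, $\pi_A$ maps $\Gamma$ onto $\bigvee_i S_i$, so the resulting morphism $\Gamma \to \bigvee_i S_i$ is a regular epi; it is also monic, because its kernel pair $\Gamma \times_A \Gamma$, as a subobject of $A \times Y \times Y$, rewrites — using that $\Sub(A \times Y \times Y)$ is a frame and that pullback functors preserve joins — as $\bigvee_{i,j}(\Gamma_i \times_A \Gamma_j)$, each term of which is contained in the subobject on which the two copies of $Y$ agree (by directedness one chooses $k \geq i,j$ with $\Gamma_i, \Gamma_j \leq \Gamma_k$, and $\Gamma_k$ is a genuine graph). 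Hence $\Gamma$ is the graph of the required unique mediating morphism $\bigvee_i S_i \to Y$. Finally, $X \times (-)$ carries $D$ to the directed diagram of subobjects $X \times S_i \rightarrowtail X$, whose colimit is by the lemma $\bigvee_i(X \times S_i)$; since the pullback functor $(\pi_2)^* \colon \Sub(1) \to \Sub(X)$ preserves joins by the Heyting property, this equals $X \times \bigvee_i S_i = X \times \colim D$, so the colimit is preserved. Cocomplete toposes, in particular Grothendieck toposes, are then a special case.

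The step I expect to be the main obstacle is the lemma that directed unions of subobjects are computed as colimits in $\cat{C}$ and are stable under the functors $X \times (-)$; the graph argument above is its crux. Its essential inputs are that $\cat{C}$ is regular (factorisations and images), that each subobject lattice is a frame (cocompleteness together with the Heyting structure), and that pullback functors $f^*$ preserve arbitrary joins of subobjects (the right adjoints $\forall_f$ provided by the Heyting property) — notably, local cartesian closedness is not needed, which is why the hypothesis can be relaxed from toposes to Heyting categories.
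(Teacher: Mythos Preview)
Your argument is correct and follows the same overall reduction as the paper: invoke Proposition~\ref{prop:coherentUnivUnion} for universal finite joins, and then establish universal directed joins by showing that directed diagrams of subobjects have their join as colimit in $\cat{C}$, stably under $X \times (-)$.

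The difference lies in how that key lemma is obtained. The paper simply cites a classical result of Grillet: in any cocomplete regular category whose change-of-base functors preserve joins of subobjects, the induced morphism from the colimit of a directed diagram into any monic cocone apex is again monic. You instead prove the lemma by hand, encoding a cocone $(f_i \colon S_i \to Y)$ as a directed family of graph subobjects $\Gamma_i \rightarrowtail A \times Y$, taking $\Gamma = \bigvee_i \Gamma_i$, and using the frame structure of $\Sub(A \times Y \times Y)$ together with preservation of joins by pullback to show that $\Gamma \to A$ is monic and hence $\Gamma \simeq \bigvee_i S_i$. This is a clean, self-contained argument that makes the required hypotheses completely transparent (regularity for images, cocompleteness for joins, the Heyting right adjoints $\forall_f$ for stability); it essentially re-derives the cited result in the special case needed. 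The paper's route is shorter and leans on the literature, while yours is more explicit and would serve a reader unfamiliar with Grillet's paper. Your closing remark that local cartesian closedness is not needed is exactly the point the paper is making by stating the result for Heyting categories rather than toposes.
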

}
\begin{proof}
  Since a Heyting category is coherent, it has universal finite joins by {Proposition}~\ref{prop:coherentUnivUnion}, with each change of base functor having a right adjoint and so preserving arbitrary joins of subobjects. In any cocomplete regular category with this property, for any directed diagram $D$ and any cocone $C$ over $D$ all of whose legs are monic, the induced map $\colim D \to C$ is again monic~\cite[Corollary II.2.4]{grillet1971regular}. Hence whenever $U$ is directed, so is each map $\colim D(U,X) \to X$, ensuring that $\colim D(U,X) = \bigvee_{s \in U} s \times X$ is in $\Sub(X)$. Since each functor $X \times (-)$ now preserves arbitrary joins of subobjects furthermore $\bigvee_{s \in U} s \times X = \colim D(U,I) \times X$, establishing universal directed joins. 
\end{proof}

Next we consider Hilbert modules. In general $\HModc$ is finitely cocomplete but not cocomplete, and so lacks directed colimits by~\cite[IX.1.1]{maclane:categorieswork}; this follows from~\cite[Example 2.3~(9)]{adamek1994locally} by taking $X$ to be trivial and so reducing to the category of Hilbert spaces and {nonexpansive} linear maps. Nonetheless, we have the following.

{
\begin{proposition}
$\HModc$ is {locale-based}.
\end{proposition}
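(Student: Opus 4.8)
The goal is to show that $\HModc$ is locale-based, meaning it is stiff, that $\ISub(\HModc)$ is a frame, and that the canonical cocones $S \otimes X \to (\bigvee U) \otimes X$ are colimiting for idempotent families $U$ of subunits. We already know from Proposition~\ref{prop:hilbertmodules} that subunits are the ideals $C_0(U)$ for $U \subseteq X$ open, so $\ISub(\HModc) \cong \mathcal{O}(X)$ is indeed a frame. So the plan is to verify stiffness and the colimit condition, using the concrete description of $A \otimes C_0(U) \cong A|_U = \{a \in A \mid \inprod{a}{a} \in C_0(U)\}$ from Proposition~\ref{prop:hilbertmodules:localadjunction}.

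First I would establish stiffness. Fixing a Hilbert module $X$ and open sets $U, V \subseteq X$, under the identification of Proposition~\ref{prop:hilbertmodules:localadjunction} the square \eqref{eq:stiff-pullback} becomes the square of inclusions $X|_{U \cap V} \hookrightarrow X|_U, X|_V \hookrightarrow X$, since $C_0(U) \otimes C_0(V) \cong C_0(U \cap V)$ as subunits (meets of subunits are intersections of opens). That $X|_{U \cap V} = X|_U \cap X|_V$ inside $X$ is immediate from the defining condition $\inprod{x}{x} \in C_0(W) \iff \inprod{x}{x}$ vanishes outside $W$, because a function vanishes outside $U \cap V$ iff it vanishes outside $U$ and outside $V$. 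An intersection of two subobjects, realised as genuine submodules with the inherited inner product, is automatically a pullback of monos in the category. This gives stiffness (and re-proves firmness, as promised in the proof of Proposition~\ref{prop:hilbertmodules}).

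Next comes the colimit condition, which I expect to be the main obstacle. Given an idempotent family $U \subseteq \ISub(\HModc)$, write it as a family of opens $\{U_i\}_{i \in I}$ closed under finite intersection, with union $W = \bigcup_i U_i$, and fix a Hilbert module $X$. By Proposition~\ref{prop:hilbertmodules:localadjunction} the diagram $D(U,X)$ is the diagram of submodules $X|_{U_i} \hookrightarrow X|_{U_j}$ (for $U_i \subseteq U_j$), all sitting inside $X|_W$, and I must show $X|_W$ is their colimit in $\HModc$. Given a cocone $(c_i \colon X|_{U_i} \to C)_{i}$ of nonexpansive maps into some Hilbert module $C$, I would define the mediating map $c \colon X|_W \to C$ as follows: for $x \in X|_W$, choose (using that $\{U_i\}$ covers $W$, an approximate-unit / partition-of-unity argument as in the surjectivity part of the proof of Proposition~\ref{prop:hilbertmodules:localadjunction}) elements $x_i \in X|_{U_i}$ with $x = \lim_k x_{i_k}$ for a suitable net, and set $c(x) = \lim_k c_{i_k}(x_{i_k})$. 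The key estimates are: nonexpansiveness of each $c_i$ gives $\|c_{i_k}(x_{i_k}) - c_{i_l}(x_{i_l})\| \le \|x_{i_k} - x_{i_l}\|$ on the common refinement (using the directedness supplied by idempotence/closure under intersection together with the cocone compatibility), so the net is Cauchy and $c$ is well-defined, $C_0(X)$-linear, nonexpansive, and restricts to $c_i$ on each $X|_{U_i}$; uniqueness follows because $\bigcup_i X|_{U_i}$ is dense in $X|_W$, which is exactly the content of the surjectivity argument in Proposition~\ref{prop:hilbertmodules:localadjunction} applied with the open cover $\{U_i\}$ of $W$ (approximate units of $C_0(U_i)$ multiply any $x \in X|_W$ to elements of $X|_{U_i}$ converging to $x$).

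Finally I would record that these colimits are preserved by each functor $Y \otimes (-)$: since $(Y \otimes X)|_W \cong Y \otimes (X|_W)$ and more generally $Y \otimes (X|_{U_i}) \cong (Y \otimes X)|_{U_i}$ by associativity/commutativity of $\otimes$ together with $C_0(U_i) \otimes C_0(U_i) \cong C_0(U_i)$, applying $Y \otimes (-)$ to $D(U,X)$ yields $D(U, Y \otimes X)$ with its canonical cocone, which is colimiting by the argument just given. This verifies Definition~\ref{def:spatial}. The one genuinely delicate point, and where I would spend the most care, is the density/approximate-unit argument showing $\bigcup_i X|_{U_i}$ is dense in $X|_W$ when $\{U_i\}$ merely covers $W$ (rather than containing $W$ itself): this needs that for $x \in X|_W$ and $\varepsilon > 0$ one can find finitely many $U_{i_1}, \dots, U_{i_n}$ and a function in $C_0(U_{i_1} \cup \dots \cup U_{i_n})$ — which lies in $U$ by idempotence — moving $x$ within $\varepsilon$, exploiting that $\inprod{x}{x} \in C_0(W)$ has support contained in a set on which a finite subcover suffices up to $\varepsilon$ by the $C_0$ condition (vanishing at infinity relative to $W$).
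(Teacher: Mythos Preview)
Your stiffness argument is fine, but the colimit argument has a genuine gap: you conflate idempotence of $U$ (closure under \emph{intersections}) with closure under \emph{unions}. The line ``a function in $C_0(U_{i_1}\cup\dots\cup U_{i_n})$ --- which lies in $U$ by idempotence'' is simply false, and the earlier phrase ``directedness supplied by idempotence/closure under intersection'' is backwards: intersections give downward directedness, not the upward directedness you need to compare $c_{i_k}(x_{i_k})$ with $c_{i_l}(x_{i_l})$. Concretely, take $X=\mathbb{R}$, $U=\{(-\infty,0),(0,\infty),\emptyset\}$ (idempotent), $W=\mathbb{R}\setminus\{0\}$, and $A=C_0(\mathbb{R})$. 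Then any $f\in A|_W$ with $f(-1)=f(1)=1$ is at distance $\geq 1$ from every element of $A|_{(-\infty,0)}\cup A|_{(0,\infty)}$, so $\bigcup_i A|_{U_i}$ is \emph{not} dense in $A|_W$ and your limit construction of the mediating map cannot get started. The colimit here is genuinely a pushout (a gluing via \emph{sums} $a_U+a_V$), not a limit of approximants from single pieces.

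The paper avoids this by invoking Corollary~\ref{cor:spatialiffboth} and treating the two cases separately: for \emph{finite} joins it proves that $A|_U+A|_V$ is dense in $A|_{U\cup V}$ (Urysohn), defines $f(a_U+a_V)=f_U(a_U)+f_V(a_V)$, and checks well-definedness via $A|_U\cap A|_V=A|_{U\cap V}$ and nonexpansiveness by a pointwise Urysohn trick; for \emph{directed} joins it uses upward directedness exactly as you sketch (now legitimately) to find a single $U_m\supseteq U_n$ so that $a_n-a_m\in A|_{U_m}$ and $\|f_{U_n}(a_n)-f_{U_m}(a_m)\|=\|f_{U_m}(a_n-a_m)\|\leq\|a_n-a_m\|$. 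Your argument is essentially the directed half of this; what is missing is the separate finite-join (pushout) argument, without which the general idempotent case cannot be handled in one stroke.
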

}
\begin{proof}
  Throughout this proof we again identify $C_0(U)$ with the submodule~\eqref{eq:subunithilbertmodules} of $C_0(X)$, and identify the module $A \otimes C_0(U)$ with $A|_U$, for open $U \subseteq X$.

  First let us show that $\HModc$ has universal finite joins of subunits. For open subsets $U, V \subseteq X$, and any Hilbert $C_0(X)$-module $A$, consider the diagram of inclusions between $A|_{U \cap V}$, $A|_U$, $A|_V$ and $A|_{U \cup V}$. It is easily seen to be a pullback, since $A|_{U \cap V} = A|_U \cap A|_V$ as subsets of $A$. We verify that it is also a pushout. Since any morphism $A_{U \cup V} \to B$ restricts to $C_0(U \cup V)$, it suffices to assume that $X = U \cup V$. We claim that
  \[
    C_0(U) + C_0(V) = \{g_U + g_v \in C_0(X) \mid g_U \in C_0(U), g_V \in C_0(V)\}
  \]
  is a dense submodule of $C_0(X)$.
  To see this, let $g \in C_0(X)$ and $\varepsilon > 0$, and $K$ be compact with $|g(x)| \geq \epsilon \implies x \in K$. Urysohn's lemma for locally compact Hausdoff spaces~\cite[2.12]{rudin2006real} produces $h \in C_0(U)$ such that $|h(x)| \leq |g(x)|$ for $x \in U$ and $h(x)=g(x)$ for $x \in K \setminus V$. Then $|(g-h)(x)| \geq 2 \varepsilon \implies x \in L$ for some compact $L \subseteq K \cap V$. Again there is $k \in C_0(V)$ with $|k(x)| \leq |g(x)|$ for all $x \in V$ and $k(x)=(g-h)(x)$ for $x \in L$. By construction $\|g-h-k\| \leq 4 \varepsilon$, establishing the claim. It follows also that 
  \[
    A|_U + A|_V = \{a_U + a_V \mid a_U \in A|_U, a_V \in A|_V\}
  \] 
  is dense in $A$, since $A \cdot C_0(X) = \{a \cdot g \mid g \in C_0(X) \}$ is so too~\cite[p5]{lance:hilbertmodules}. 

  Now suppose $f_U \colon A|_U \to B$ and $f_V \colon A|_V \to B$ agree on $A|_{U \cap V}$. Then for $a=a_U + a_V$ with $a_U \in A|_U$ and $a_V \in A|_V$, the assignment
  \[
    f(a) = f_U(a_U) + f_V(a_V)
  \]
 is a well-defined $A$-linear map. Hence it extends to a unique map $f \colon A \to B$ which is by definition the unique factorisation of $f_U$ and $f_V$ through the diagram. 

  Now we must check that $f$ is {nonexpansive} when $f_U$ and $f_V$ are. Let $x \in X$, and without loss of generality say $x \in U$. 
   Urysohn's lemma again produces $g \in C_0(U)$ with $g(x) = 1 = \|g\|$. Now $a \cdot g \in A|_U$ for any $a \in A$. So, writing $|a|^2(x)$ for $|\inprod{a}{a}(x)|$, we find
  \[
    |f(a)|(x) 
    = |f(a)\cdot g| (x)  
    \leq \|f(a) \cdot g\|
    = \|f_U(a) \cdot g\|
    \leq \|a\| \|g\|
    \leq \|a\|
  \]
  using $\|f_U\|\leq 1$. Since $x$ was arbitrary, also $\|f\|\leq 1$.

  Next, let us consider universal directed joins of subunits. For this, let $W$ be a directed family of open sets in $X$; again it suffices to assume $X = \bigcup W$. We claim that 
  \[
    \bigcup_{U \in W} C_0(U) = 
    \{g \in C_0(X) \mid g \in C_0(U) \text{ for some }U \in W\}
  \]
  is a dense submodule of $C_0(X)$. Again let $g \in C_0(X)$ and $\varepsilon > 0$, and let $K$ be compact with $|g(x)| \geq \epsilon \implies x \in K$. Since $K$ is compact and $W$ is directed, $K \subseteq U$ for some $U \in W$. Urysohn again provides $h \in C_0(U)$ with $|h(x)| \leq |g(x)|$ for all $x \in U$ and $h(x)=g(x)$ for $x \in K$. Then $|g - h|(x) \leq |g(x)| + |h(x)| \leq 2 \varepsilon$ for $x \in X \setminus K$ and so, since $g$ and $h$ agree on $K$, we have $\|g-h\| \leq 2 \varepsilon$, establishing the claim. Similarly, for any Hilbert module $A$, since $A \cdot C_0(X)$ is dense in $A$, so is $\bigcup_{U \in W} A|_U$.

  Finally, let $f_U \colon A|_U \to B$ be a cocone over $D(W,A)$. It suffices to show that there is a unique $f \colon A \to B$ with $f(a) = f_U(a)$ for all $a \in A|_U$. But any $a \in A$ has $a = \lim (a_n)^{\infty}_{n=1}$ with each $a_n \in A|_{U_n}$ for some $U_n$. By directedness we may assume $U_n \subseteq U_{n+1}$ for all $n$. Then $f \colon A \to B$ must satisfy $f(a) = \lim f_{U_n}(a_n)$, making $f$ unique. Additionally, this limit is always well-defined since $a_n$ is a Cauchy sequence and so for $n \leq m$:
  \[
    \|f_{U_n}(a_n) - f_{U_m}(a_m)\| = \|f_{U_m}(a_n - a_m)\| \leq \|a_n - a_m \| 
  \]
  and $f_{U_n}(a_n)$ is also a Cauchy sequence. Clearly $f$ is $A$-linear and $\|f\|\leq 1$.
\end{proof}

\section{Universal joins from colimits}\label{sec:universaljoins}

This section characterises each of the notions of universal joins purely categorically, without order-theoretic assumptions on $\ISub(\cat{C})$. Instead, they will be cast solely in terms of the diagrams $D(U,X)$. When we turn to completions in the next sections, we can therefore use the diagrams $D(U,X)$ themselves as formal joins to add.

\begin{lemma}\label{lem:idempotentsetsofidempotentsubunits}
  Let $\cat{C}$ be a stiff category. If $U \subseteq \ISub(\cat{C})$ is idempotent, then any cocone over $D(U,X)$ extends uniquely to one over $D(\downset U, X)$. 
\end{lemma}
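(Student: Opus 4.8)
The plan is to exploit the characteristic feature of a stiff category: for any subunits $s,t$ and any object $X$ the leg $s \otimes X \colon S \otimes X \to X$ is monic, so there is at most one morphism $S \otimes X \to T \otimes X$ in $D(\downset U, X)$, and \eqref{eq:stiff-pullback} exhibits $(S \wedge T)\otimes X$ as the pullback of $s \otimes X$ and $t \otimes X$ over $X$. First I would record that whenever $r \in \downset U$ and $u \in U$ satisfy that $r \otimes X$ factors through $u \otimes X$ over $X$ — in particular whenever $r \leq u$, by tensoring $r = u \circ k$ with $X$ — there is a unique such factorisation $\iota^u_r \colon R \otimes X \to U \otimes X$, and that these morphisms compose in the evident way, again by monicity of the relevant leg.

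Given a cocone $(c_s \colon S \otimes X \to C)_{s \in U}$ over $D(U,X)$, I would define the extension by $c_r := c_u \circ \iota^u_r$, for any $u \in U$ with $r \otimes X$ factoring through $u \otimes X$; at least one such $u$ exists because $r \in \downset U$. The crux is well-definedness. Given a second such $u' \in U$, idempotence of $U$ gives $u \wedge u' \in U$, and the pullback property of \eqref{eq:stiff-pullback} (for $s = u$, $t = u'$) yields a morphism $\iota^{u \wedge u'}_r \colon R \otimes X \to (U \wedge U') \otimes X$ over $X$ with $\iota^u_r = p \circ \iota^{u \wedge u'}_r$ and $\iota^{u'}_r = q \circ \iota^{u \wedge u'}_r$, where $p$ and $q$ are the pullback projections. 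Since $p$ and $q$ are themselves morphisms of $D(U,X)$ (here is where $u \wedge u' \in U$ is used), the cocone identities give $c_u \circ p = c_{u \wedge u'} = c_{u'} \circ q$, whence $c_u \circ \iota^u_r = c_{u \wedge u'} \circ \iota^{u \wedge u'}_r = c_{u'} \circ \iota^{u'}_r$. This is the only step I expect to offer any resistance: everything turns on $U$ being closed under binary meets, so that the comparison vertex $u \wedge u'$ is again in the diagram $D(U,X)$.

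The remaining verifications are routine. The extension agrees with the original cocone on $U$ by taking $u = s$. For the cocone condition over $D(\downset U, X)$, a diagram morphism $g \colon R \otimes X \to R' \otimes X$ satisfies $(r' \otimes X) \circ g = r \otimes X$; choosing $u' \in U$ with $r' \otimes X$ factoring through $u' \otimes X$, the composite $\iota^{u'}_{r'} \circ g$ then witnesses $r \otimes X$ factoring through $u' \otimes X$ and so, by monicity, equals $\iota^{u'}_r$, giving $c_{r'} \circ g = c_{u'} \circ \iota^{u'}_{r'} \circ g = c_{u'} \circ \iota^{u'}_r = c_r$. Finally, uniqueness of the extension is forced because for $r \in \downset U$ and $u \in U$ as above, the morphism $\iota^u_r$ already lies in $D(\downset U, X)$, so any extending cocone $(c'_r)$ must satisfy $c'_r = c'_u \circ \iota^u_r = c_u \circ \iota^u_r = c_r$.
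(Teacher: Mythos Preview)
Your proposal is correct and follows essentially the same approach as the paper: define $c_r$ via a chosen $u \in U$ above $r$, then use idempotence of $U$ together with the stiff pullback~\eqref{eq:stiff-pullback} to pass through $u \wedge u' \in U$ for independence of choice and the cocone condition. The only difference is organisational: you cleanly separate well-definedness from the cocone verification, whereas the paper handles both at once via a single diagram involving $R \otimes R' \otimes X$ and $S \otimes S' \otimes X$.
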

  Therefore, $\cat{C}$ has colimits of $D(U,X)$ for all downward-closed $U \subseteq \ISub(\cat{C})$ if and only if it has them for idempotent $U$.
\begin{proof}
  Let $U$ be idempotent and consider a cocone $c_s \colon S \otimes X \to X$ over $D(U,X)$. 
  Let $r \in \downset U$, say $r = s \circ f$ for $s \in U$ and $f \colon R \to S$.
  Define $c_r = c_s \circ (f \otimes X) \colon R \otimes X \to X$. 
  This is clearly the only possible extension of $c_s$ to $D(\downset U, X)$. We will prove that it is a well-defined cocone.
  Suppose $r' \in \ISub(\cat{C})$ satisfies $r' \leq s'$ for $s' \in U$, and $r \otimes X = (r' \otimes X) \circ g$.
  Then the marked morphism in the following diagram is an isomorphism:
  \[\begin{pic}[xscale=5]
    \node (tl) at (-1,3) {$R \otimes X$};
    \node (tr) at (1,3) {$R' \otimes X$};
    \node (um) at (0,2.25) {$R \otimes R' \otimes X$};
    \node (lml) at (-1,1) {$S \otimes X$};
    \node (lmc) at (0,1) {$S \otimes S' \otimes X$};
    \node (lmr) at (1,1) {$S' \otimes X$};
    \node (b) at (0,0.25) {$X$};
    \draw[>->] (tl) to node[above]{$g$} (tr);
    \draw[>->] (tl) to (lml);
    \draw[>->] (tr) to (lmr);
    \draw[>->] (um) to (lmc);
    \draw[>->] (um) to node[below]{$r\otimes R'\otimes X$} (tr);
    \draw[>->] (um) to node[right=8mm]{$\simeq$} node[below=1mm]{$R \otimes r' \otimes X$} (tl);
    \draw[>->] (lmc) to node[above]{$S \otimes s' \otimes X$} (lml);
    \draw[>->] (lmc) to node[above]{$s \otimes S' \otimes X$} (lmr);
    \draw[->] (lml) to node[below]{$c_s$} (b);
    \draw[->] (lmr) to node[below]{$c_{s'}$} (b);
  \end{pic}\]
  The upper triangle and central squares commute trivially. The lower quadrilateral commutes and equals $c_{s \otimes s'}$ because $s \otimes s' \in U$ and $c$ is a cocone. Hence the outer diagram commutes, showing $c_r = c_{r'} \circ g$ as required. 
  In particular, taking $R' = R$ shows that $c_r$ is independent of the choice of $s$.
\end{proof}

\begin{lemma}\label{lem:cocone}
  Let $\cat{C}$ and $\cat{D}$ be stiff categories, $U \subseteq \ISub(\cat{C})$ be idempotent, and $c_s \colon S \otimes X \to C$ be a cocone over $D(U,X)$. If a functor $F \colon \cat{C} \to \cat{D}$ preserves monomorphisms of the form $s \otimes X \rightarrowtail X$, for subunits $s$, and the pullbacks~\eqref{eq:stiff-pullback}, then $F(c_s)$ is a cocone over $D\big( F(U), F(X) \big)$, where $F(U) = \{F(s) \mid s \in U\}$.
\end{lemma}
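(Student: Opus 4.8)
\emph{Proof proposal.} The plan is to verify the cocone condition directly. A morphism of $D(F(U),F(X))$ from the object indexed by $s$ to the one indexed by $t$ (for $s,t\in U$) is a map $g\colon F(S\otimes X)\to F(T\otimes X)$ with $F(t\otimes X)\circ g=F(s\otimes X)$, and we must show $F(c_t)\circ g=F(c_s)$. If $g$ happened to be $F(f)$ for a connecting morphism $f$ of $D(U,X)$ this would be immediate from functoriality and the cocone property of $c$; the whole point, and the main obstacle, is that $g$ need \emph{not} arise this way, since $\cat{D}$ may see more subobject inclusions between the $F(S\otimes X)$ than $\cat{C}$ does between the $S\otimes X$. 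The pullback hypothesis on $F$ is precisely what lets us bypass this.

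First I would apply $F$ to the stiff pullback~\eqref{eq:stiff-pullback} for $s$, $t$ and $X$; by hypothesis the resulting square, with corners $F(S\otimes T\otimes X)$, $F(T\otimes X)$, $F(S\otimes X)$, $F(X)$, is a pullback in $\cat{D}$. The pair $\bigl(\id[F(S\otimes X)],g\bigr)$ is a cone over its span — the two composites down to $F(X)$ are $F(s\otimes X)$ and $F(t\otimes X)\circ g$, which agree — so it factors through a unique $h\colon F(S\otimes X)\to F(S\otimes T\otimes X)$ with $F(S\otimes t\otimes X)\circ h=\id$ and $F(s\otimes T\otimes X)\circ h=g$. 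Next I would record the pertinent instance of the cocone identity inside $\cat{C}$: since $U$ is idempotent, $s\wedge t\in U$, so $(S\otimes T)\otimes X$ is an object of $D(U,X)$ and the two projections of~\eqref{eq:stiff-pullback}, namely $S\otimes t\otimes X\colon (S\otimes T)\otimes X\to S\otimes X$ and $s\otimes T\otimes X\colon (S\otimes T)\otimes X\to T\otimes X$, are its connecting morphisms (up to coherence, $(s\otimes X)\circ(S\otimes t\otimes X)=(s\wedge t)\otimes X=(t\otimes X)\circ(s\otimes T\otimes X)$, and these are unique by stiffness of $\cat{C}$; compare Lemma~\ref{lem:idempotentsetsofidempotentsubunits}). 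Hence $c$ being a cocone over $D(U,X)$ gives $c_s\circ(S\otimes t\otimes X)=c_{s\wedge t}=c_t\circ(s\otimes T\otimes X)$.

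Finally I would combine these: applying $F$ to the last identity and precomposing with $h$,
\[
  F(c_t)\circ g
  = F(c_t)\circ F(s\otimes T\otimes X)\circ h
  = F\bigl(c_t\circ(s\otimes T\otimes X)\bigr)\circ h
  = F\bigl(c_s\circ(S\otimes t\otimes X)\bigr)\circ h
  = F(c_s)\circ F(S\otimes t\otimes X)\circ h
  = F(c_s),
\]
as desired. I expect the genuinely delicate point to be exactly the one flagged above — a morphism of $D(F(U),F(X))$ is not in general in the image of $F$, and it is the pullback-preservation hypothesis (rather than functoriality alone) that reduces it to $F$-images of structure maps of $\cat{C}$; everything else, including matching the pullback projections to the connecting maps of $D(U,X)$ and checking that the objects $F(S\otimes X)$ together with $D(F(U),F(X))$ form a diagram of the expected shape using the mono-preservation hypothesis and stiffness of $\cat{D}$, is routine.
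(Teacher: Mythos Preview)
Your proposal is correct and follows essentially the same argument as the paper: factor the arbitrary connecting morphism $g$ through the preserved pullback via a map $h\colon F(S\otimes X)\to F(S\otimes T\otimes X)$, then use the cocone identity $c_s\circ(S\otimes t\otimes X)=c_{s\wedge t}=c_t\circ(s\otimes T\otimes X)$ coming from idempotence of $U$. The paper phrases the last step slightly differently---observing that $F(S\otimes t\otimes X)$ is split epic (via $h$) and monic (as a pullback of a mono), hence invertible, so that $g=F(s\otimes T\otimes X)\circ F(S\otimes t\otimes X)^{-1}$---but this is just your computation rearranged.
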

\begin{proof}
  Clearly, if $s \otimes X \leq t \otimes X$ then $F(s \otimes X) \leq F(t \otimes X)$, and $F(c_s)$ respects the inclusion. 
  Conversely, suppose that $F(s \otimes X) \leq F(t \otimes X)$ via some morphism $f$, and consider the following diagram.
  \[\begin{tikzpicture}[xscale=5,yscale=1.5]
    \node (tl) at (0,1) {$F(S \otimes T \otimes X)$};
    \node (tr) at (1,1) {$F(T \otimes X)$};
    \node (bl) at (0,0) {$F(S \otimes X)$};
    \node (br) at (1,0) {$F(C)$};
    \node (x) at (1.4,-.7) {$F(X)$};
    \draw[>->] (tl) to node[above]{$F(s \otimes T \otimes X)$} (tr);
    \draw[>->] (tl) to node[left]{$F(S \otimes t \otimes X)$} (bl);
    \draw[->] (tr) to node[right]{$F(c_t)$} (br);
    \draw[->] (bl) to node[below]{$F(c_s)$} (br);
    \draw[>->] (tr) to[out=-30,in=90,looseness=.7] node[right]{$F(t \otimes X)$} (x);
    \draw[>->] (bl) to[out=-60,in=180] node[below]{$F(s \otimes X)$} (x);
    \draw[->](bl) to node[above]{$f$} (tr);
  \end{tikzpicture}\]
  The outer rectangle commutes by bifunctoriality, and $F(t \otimes X) \circ f = F(s \otimes X)$ by assumption. 
  Hence the upper left triangle commutes because $F(t \otimes X)$ is monic by stiffness and the assumption on $F$. The inner square commutes and is equal to $F(c_{s \otimes t})$ by definition of $D(U,X)$. Since the outer rectangle is a pullback, the leftmost vertical morphism is invertible and hence $F(c_t) \circ f = F(c_s)$. 
\end{proof}

Now suppose a diagram $D(U,X)$ has a colimit $c_s^X \colon S \otimes X \to \colim D(U,X)$ for each idempotent $U \subseteq \ISub(\cat{C})$ and object $X$.
Then there are two canonical morphisms. 
First, a mediating map $\colim D(U,I) \to I$ to the cocone $s \colon S \to I$.
\begin{equation}\label{eq:sU}\begin{pic}[xscale=2,yscale=1.5]
  \node (c) at (2,0) {$\colim D(U,I)$};
  \node (i) at (2,-1) {$I$};
  \node (s) at (0,0) {$S$};
  \draw[->,dashed] (c) to (i);
  \draw[->] (s) to node[below]{$s$} (i);
  \draw[->] (s) to node[above]{$c_s^I$} (c);
\end{pic}\end{equation}
Second, in a stiff category it follows from applying Lemma~\ref{lem:cocone} to $(-) \otimes X$ that there is a unique map making the following triangle commute for all $s \in U$:
\begin{equation}\label{eq:deltaUX}\begin{pic}[xscale=2,yscale=1.5]
    \node (t) at (0,0) {$S \otimes X$};
    \node (br) at (2,-1) {$(\colim D(U,I)) \otimes X$};
    \node (bl) at (2,0) {$\colim D(U,X)$};
    \draw[->,dashed] (bl) to (br);
    \draw[->] (t) to node[above]{$c_s^X$} (bl);
    \draw[->] (t) to node[below]{$c_s^I \otimes X$} (br);
\end{pic}\end{equation}

If $\cat{C}$ has universal joins of $U$ then $\bigvee U = \colim D(U,I)$ and~\eqref{eq:sU} is monic, and~\eqref{eq:deltaUX} is invertible by definition.
We now set out to prove the converse.

\begin{lemma}\label{lem:spatialcharacterisation:monic}
  Let $\cat{C}$ be a stiff category, and let $U \subseteq \ISub(\cat{C})$ be idempotent.
  Suppose that $D(U,X)$ has a colimit for each object $X$ and that each morphism \eqref{eq:deltaUX} is an isomorphism.
  If the morphism $\colim D(U,I) \to I$ of~\eqref{eq:sU} is monic, then it is a subunit.
\end{lemma}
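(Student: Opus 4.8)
The plan is to show that $s_U\otimes S_U$ is invertible, where I abbreviate $S_U=\colim D(U,I)$ with colimit cocone $c_s\colon S\to S_U$ for $s\in U$, and $s_U\colon S_U\to I$ is the morphism \eqref{eq:sU}, so that $s_U\circ c_s=s$; since $s_U$ is monic by hypothesis, it is then a subunit by Definition~\ref{def:subunit}. Two preliminaries. First, by \eqref{eq:deltaUX} with $X=S_U$ the maps $c_s\otimes S_U\colon S\otimes S_U\to S_U\otimes S_U$ form a colimit of $D(U,S_U)$, and by \eqref{eq:deltaUX} with $X=S$ together with the braiding the maps $S\otimes c_r\colon S\otimes R\to S\otimes S_U$ $(r\in U)$ are jointly epic into $S\otimes S_U$ for each $s\in U$. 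Second, since each $s\in U$ is a subunit, $\phi_s:=\lambda_S\circ(s\otimes S)$ is invertible; writing $\psi_s$ for its inverse, one checks using only bifunctoriality and naturality of $\lambda$ that $\phi_s\circ(f\otimes f)=f\circ\phi_r$, hence $\psi_s\circ f=(f\otimes f)\circ\psi_r$, whenever $f\colon R\to S$ is a morphism of $D(U,I)$ (that is, $s\circ f=r$).

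From the second preliminary, $d_s:=(c_s\otimes c_s)\circ\psi_s$ is a cocone over $D(U,I)$, inducing $d\colon S_U\to S_U\otimes S_U$ with $d\circ c_s=d_s$. I claim $d$ is a two-sided inverse of $\lambda_{S_U}\circ(s_U\otimes S_U)$. Precomposing $\lambda_{S_U}\circ(s_U\otimes S_U)\circ d$ with $c_s$ and using $s_U\circ c_s=s$, the defining property of $\psi_s$, and naturality of $\lambda$, the expression collapses to $c_s$; as the $c_s$ are jointly epic this yields $\lambda_{S_U}\circ(s_U\otimes S_U)\circ d=\id[S_U]$.

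For the reverse composite I precompose $d\circ\lambda_{S_U}\circ(s_U\otimes S_U)$ with the colimit cocone $c_s\otimes S_U$ and then with $S\otimes c_r$; using bifunctoriality and naturality of $\lambda$, the required identity reduces to
\[
  (c_r\otimes c_r)\circ\psi_r\circ\lambda_R\circ(s\otimes R)=c_s\otimes c_r\qquad(s,r\in U).
\]
Here the idempotence of $U$ enters: $p:=s\wedge r$ lies in $U$, with domain $P=S\otimes R$ and $p=\lambda_I\circ(s\otimes r)$, and the projections $a:=\lambda_R\circ(s\otimes R)\colon P\to R$ and $b:=\rho_S\circ(S\otimes r)\colon P\to S$ satisfy $r\circ a=p=s\circ b$, hence $c_r\circ a=c_p=c_s\circ b$. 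Applying the naturality square of $\psi$ to the morphism $a$ of $D(U,I)$ rewrites the left-hand side as $(c_p\otimes c_p)\circ\psi_p=(c_s\otimes c_r)\circ(b\otimes a)\circ\psi_p$, so it remains to prove $(b\otimes a)\circ\psi_p=\id[P]$, equivalently $b\otimes a=\phi_p$.

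I expect this last identity to be the only real obstacle; everything else is bookkeeping with colimit cocones and the coherence isomorphisms. It is a coherence computation: both $b\otimes a$ and $\phi_p$ are canonical morphisms $(S\otimes R)^{\otimes 2}\to S\otimes R$, and after regrouping the two copies of $S$ and of $R$ by the same braiding each contracts the $R$-factors by $\lambda_R\circ(r\otimes R)$, while $\phi_p$ contracts the $S$-factors by $\lambda_S\circ(s\otimes S)$ and $b\otimes a$ by $\rho_S\circ(S\otimes s)$ — and these two agree precisely because $s$ is a subunit, since $\lambda_I\circ(s\otimes s)=\rho_I\circ(s\otimes s)$ and $s$ is monic force $\lambda_S\circ(s\otimes S)=\rho_S\circ(S\otimes s)$. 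This establishes that $\lambda_{S_U}\circ(s_U\otimes S_U)$, hence $s_U\otimes S_U$, is an isomorphism, so $s_U$ is a subunit.
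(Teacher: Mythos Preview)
Your proof is correct and takes a genuinely different route from the paper's.

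The paper argues as follows: for each $s\in U$ it first shows that $S\otimes s_U$ is an isomorphism (it is split epic since $(S\otimes s_U)\circ(S\otimes c_s)=S\otimes s$ is invertible, and monic by stiffness together with monicity of $s_U$). These isomorphisms assemble into an isomorphism of diagrams $D(U,S_U)\to D(U,I)$, whence the induced map between their colimits, which one checks is $S_U\otimes s_U$, is invertible.

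You instead build the inverse directly: the maps $d_s=(c_s\otimes c_s)\circ\psi_s$ form a cocone over $D(U,I)$, giving $d\colon S_U\to S_U\otimes S_U$, and you verify that $d$ inverts $\lambda_{S_U}\circ(s_U\otimes S_U)$ by precomposing with the jointly epic families $c_s$ and $(c_s\otimes S_U)\circ(S\otimes c_r)$. The reduction to the identity $b\otimes a=\phi_p$ is exactly right, and your braiding argument for it works; a slightly quicker way to see it is to post-compose with the monomorphism $p$ and observe that $p\circ(b\otimes a)=\lambda_I\circ(p\otimes p)=p\circ\phi_p$.

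The paper's approach is more conceptual (isomorphism of diagrams, no element-level coherence check), while yours is more explicit, producing the actual comultiplication $d$ witnessing idempotence; this is arguably more informative, at the cost of the final coherence computation.
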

\begin{proof}
  Write $s_U$ for this morphism, which is monic by assumption.
  For each $s \in U$, we claim $S \otimes s_U \colon S \otimes \colim D(U,I) \to S$ is an isomorphism. 
  It is monic because
  \[
    s_U \circ c_s \circ (S \otimes s_U)
    = s \otimes s_U
    = s_U \circ \big(s \otimes \colim D(U,I)\big)
  \]
  where $s_U$ and $s \otimes \colim D(U,I)$ are monic by stiffness. But it is also split epic since $(S \otimes s_U) \circ (S \otimes c_s) = S \otimes s$ is an isomorphism.

  Now since $s \circ (S \otimes s_U) = s_U \circ (s \otimes \colim D(U,I))$,  bifunctoriality of $\otimes$ shows that for all $s, t \in U$:
  \[
    s \otimes \colim D(U,I) \leq t \otimes \colim D(U,I)
    \quad\iff\quad
    s \leq t
  \]
  This gives an isomorphism of diagrams $S \otimes s_U \colon S \otimes \colim D(U,I) \to S$ from $D\big(U, \colim D(U,I)\big)$ to $D(U,I)$. Writing $c_s \colon S \to \colim D(U,I)$ for the latter colimit, $c_s \otimes \colim D(U,I)$ is a colimit for the former by assumption. Hence the unique map making the following square commute
  \[\begin{pic}[xscale=4,yscale=1.5]
    \node (tl) at (0,1) {$S \otimes \colim D(U,I)$};
    \node (tr) at (1,1) {$S$};
    \node (bl) at (0,0) {$\colim D(U,I) \otimes \colim D(U,I)$};
    \node (br) at (1,0) {$\colim D(U,I)$};
    \draw[->,dashed] (bl) to (br);
    \draw[->] (tl) to node[above]{$S \otimes s_U$} (tr);
    \draw[->] (tl) to node[left]{$c_s \otimes \colim D(U,I)$} (bl);
    \draw[->] (tr) to node[right]{$c_s$} (br);
  \end{pic}\]
  is invertible. But this map is just $\colim D(U,I) \otimes s_U$, so $s_U$ is a subunit.
\end{proof}

We can now characterise {locale-based} categories purely categorically.

\begin{theorem} \label{thm:spatialcharacterisation}
  A stiff category $\cat{C}$ has universal (finite, directed) joins if and only if for each idempotent (and finitely bounded, directed) $U \subseteq \ISub(\cat{C})$:
  \begin{itemize}
    \item the diagram $D(U,X)$ has a colimit;
    \item the canonical morphism~\eqref{eq:sU} is monic;
    \item the canonical morphism~\eqref{eq:deltaUX} is invertible.
  \end{itemize}
\end{theorem}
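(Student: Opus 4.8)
The plan is to reduce the statement to Proposition~\ref{prop:unify}, which already reformulates ``universal finite (directed) joins'' as the conjunction of ``$\ISub(\cat{C})$ has finite (directed) joins'' and ``$D(U,X)$ has colimit $S \otimes X \to (\bigvee U) \otimes X$ for every idempotent, finitely bounded (directed) family $U$''. So it suffices, in each of the three cases, to show that this reformulation is equivalent to the three bulleted conditions.

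For the forward implication I would argue exactly as in the discussion preceding Lemma~\ref{lem:spatialcharacterisation:monic}. If $\cat{C}$ has universal (finite, directed) joins then, by Proposition~\ref{prop:unify}, $D(U,X)$ has a colimit with apex $(\bigvee U) \otimes X$ and cocone $c^X_s = (S \rightarrowtail \bigvee U) \otimes X$. Taking $X = I$ identifies $\colim D(U,I)$ with the subunit $\bigvee U \rightarrowtail I$, so~\eqref{eq:sU} is monic; and since $c^I_s \otimes X$ is literally $c^X_s$, the comparison map~\eqref{eq:deltaUX} is an isomorphism. This is routine bookkeeping with the identifications. (In the unqualified case one may instead read the three conditions directly off Definition~\ref{def:spatial} of a locale-based category, using Corollary~\ref{cor:spatialiffboth} to know that ``has universal joins'' means ``locale-based''.)

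For the converse, assume the three bulleted conditions hold. Then Lemma~\ref{lem:spatialcharacterisation:monic} applies verbatim and shows that $s_U \colon \colim D(U,I) \to I$ from~\eqref{eq:sU} is a subunit. The key step is to prove that $s_U$ is a supremum of $U$ in $\ISub(\cat{C})$: each $s \in U$ factors through $s_U$ via $c^I_s$, so $s \leq s_U$; conversely, if a subunit $t$ satisfies $s \leq t$ for all $s \in U$, the inclusions $S \rightarrowtail T$ form a cocone over $D(U,I)$ --- compatibility with the diagram morphisms holds because $t$ is monic --- and the induced map $\colim D(U,I) \to T$ composed with $t$ equals $s_U$ by the uniqueness clause in~\eqref{eq:sU}, whence $s_U \leq t$. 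Thus $\ISub(\cat{C})$ has joins of every idempotent finitely bounded (directed) family; passing to the meet-closure of an arbitrary finite (directed) family --- which is again idempotent and finitely bounded (directed) with the same supremum --- shows $\ISub(\cat{C})$ has all finite (directed) joins. Finally, invertibility of~\eqref{eq:deltaUX} identifies $\colim D(U,X)$ with $(\colim D(U,I)) \otimes X = (\bigvee U) \otimes X$ and transports the colimit cocone to $(S \rightarrowtail \bigvee U) \otimes X$, so Proposition~\ref{prop:unify} yields universal finite (directed) joins. The unqualified case then follows by combining the two: the three conditions for all idempotent $U$ specialise to the finitely bounded and the directed idempotent families, giving universal finite and directed joins, hence a locale-based category by Corollary~\ref{cor:spatialiffboth}.

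I expect the main obstacle to be the identification $s_U = \bigvee U$ in the converse direction --- specifically, checking that an upper bound $t$ of $U$ really does receive a compatible cocone from $D(U,I)$ (this uses monicity of $t$ together with the precise shape of $D(U,I)$), and then ensuring that the colimit cocone one ends up with is exactly the canonical one $(S \rightarrowtail \bigvee U) \otimes X$ required by Proposition~\ref{prop:unify}, not merely isomorphic to it. The passage from finitely bounded (directed) idempotent families to arbitrary finite (directed) families is a further point to handle with care, though it is not deep.
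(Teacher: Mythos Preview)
Your proposal is correct and follows essentially the same route as the paper: apply Lemma~\ref{lem:spatialcharacterisation:monic} to see $s_U$ is a subunit, argue it is the supremum of $U$ by showing any upper bound $t$ receives a cocone of inclusions (using monicity of $t$) and invoking joint epicness of the $c^I_s$, then appeal to Proposition~\ref{prop:unify} and Corollary~\ref{cor:spatialiffboth}. Your explicit remark about passing from idempotent to arbitrary finite (directed) families via meet-closure, and your care about identifying the transported cocone with the canonical one required by Proposition~\ref{prop:unify}, are points the paper leaves implicit but which you handle correctly.
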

\begin{proof}
  The conditions are clearly necessary, as already discussed. Conversely, suppose that they hold and let $U \subseteq \ISub(\cat{C})$ be as above. Lemma~\ref{lem:idempotentsetsofidempotentsubunits} lets us assume $U = \downset U$.  
  Then $s_U \colon \colim D(U,I) \to I$ is a subunit by Lemma~\ref{lem:spatialcharacterisation:monic}, and by definition $s \leq s_U$ for all $s \in U$. 
  Now suppose that $t$ is also an upper bound in $\ISub(\cat{C})$ of all $s \in U$.
  Then the inclusions $i_{s,t} \colon S \to T$ form a cocone over $D(U,I)$.
  Hence there is a unique mediating map $f \colon \colim D(U,I) \to T$ with $i_{s,t} = f \circ c_s^I$ for all $s \in U$.
  But then
  \[
    t \circ f \circ c_s^I = t \circ i_{s,t} = s = s_U \circ c_s^I
  \]
  for all $s \in U$.
  Because the $c_s^I$ are jointly epic, $t \circ f = s_U$, so that $s_U \leq t$. 
  Therefore indeed $\colim D(U,I) = \bigvee U$. 
  Thus universal finite or directed joins follow by Proposition~\ref{prop:unify}, and so arbitrary ones by Corollary~\ref{cor:spatialiffboth}.
\end{proof}

\section{Completions} 
\label{sec:broad}

Our goal for this section is to embed a stiff category $\cat{C}$ into one with any given kind of universal joins of subunits, including a {locale-based} category. One might think to work with the free cocompletion of $\cat{C}$, the category of presheaves $\Psh{\cat{C}}=[\cat{C}\op, \cat{Set}]$. Here, $\Psh{\cat{C}}$ is endowed with the Day convolution $\ootimes$ as tensor; for details see \appref. Although $\Psh{\cat{C}}$ has a complete lattice of subunits, we will see that it has two problems: it is in general not firm, and it has too many subunits to be the {locale-based} completion. We will remedy both problems by passing to a full subcategory of so-called broad presheaves.

First, note that any subunit $s$ in a firm category $\cat{C}$ induces a subunit ${s \circ (-)} \colon \cat{C}(-,S) \to \cat{C}(-,I)$ in $\Psh{\cat{C}}$ since the Yoneda embedding is monoidal, full, and faithful, and preserves all limits and hence monomorphisms. 

\begin{proposition}\label{prop:colimitspresheaves}
  If $\cat{C}$ is a cocomplete regular category, and  for all objects $A$ the functors $A \otimes (-)$ preserve colimits, then $\ISub(\cat{C})$ is a complete lattice.
  Thus, if $\cat{C}$ is any braided monoidal category, then $\ISub(\Psh{\cat{C}})$ is a complete lattice.
\end{proposition}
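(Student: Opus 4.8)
The plan is to establish the first sentence as a general statement about monoidal categories, then apply it to $\Psh{\catC}$. For the first claim, suppose $\catC$ is cocomplete and regular with each $A \otimes (-)$ cocontinuous. Given a family $\{s_i \colon S_i \rightarrowtail I\}$ of subunits, I would form the join in $\Sub(I)$ in the usual way for a regular category: take the coproduct $\coprod_i S_i$, form the induced map $\coprod_i S_i \to I$, and factor it as a regular epi followed by a mono $s \colon S \rightarrowtail I$; this $s$ is the supremum of the $s_i$ in $\Sub(I)$. The work is to check $s$ is again a subunit, i.e.\ that $s \otimes S$ is invertible. Here I would use that $S \otimes (-)$ preserves the regular-epi--mono factorisation (it preserves colimits, hence regular epis, and in a regular category a cocontinuous functor between regular categories preserving finite limits preserves the factorisation — or more directly, one argues $S \otimes S$ is the image of $S \otimes \coprod_i S_i \simeq \coprod_i S \otimes S_i$). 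Since each $s_i$ is a subunit we have $S_i \otimes s_i$ invertible, and by Lemma~\ref{lem:subunitsorder} each $S_i \otimes s$ and $s \otimes S_i$ is invertible once $s_i \leq s$; this makes $\coprod_i(s \otimes S_i) \colon \coprod_i S \otimes S_i \to \coprod_i S \otimes I$ an iso, and pushing through the factorisation yields that $s \otimes S$ is a split epi, and it is monic by firmness (which follows, e.g.\ from $\catC$ being co-closed — but in any case one shows directly it is monic), hence invertible.

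For the second claim, the point is simply that $\Psh{\catC} = [\catC\op, \cat{Set}]$ is cocomplete (colimits computed pointwise), is a topos and hence regular, and under Day convolution each functor $F \ootimes (-)$ is cocontinuous: by the defining property of Day convolution, $F \ootimes (-)$ is a left adjoint (its right adjoint is the internal hom of $\Psh{\catC}$, as recalled in Appendix~\appref), so it preserves all colimits. The three hypotheses of the first claim are therefore met for $\Psh{\catC}$, regardless of any properties of $\catC$, so $\ISub(\Psh{\catC})$ is a complete lattice.

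The main obstacle I anticipate is the interaction between the regular-epi--mono factorisation and the tensor in the first part: one must be careful that $S \otimes (-)$ sends the chosen factorisation of $\coprod_i S_i \to I$ to a factorisation, so that the image $S \otimes S$ can be identified correctly. This is where regularity and cocontinuity of $A \otimes (-)$ are both genuinely used; in a topos (our actual case of interest) it is automatic since every cocontinuous functor preserving finite limits — and $F \ootimes (-)$ need not preserve finite limits, so one should instead argue concretely that $S \otimes S$ is the regular image of $S \otimes \coprod_i S_i$ and that the comparison map $S \otimes S \to S$ is the mono part. Once that identification is in place, checking that $s \otimes S$ is split epic and monic, hence invertible, is routine, and the supremum property of $s$ in $\ISub(\catC)$ follows from its supremum property in $\Sub(I)$ since $\ISub(\catC) \hookrightarrow \Sub(I)$ is a full sub-poset by Proposition~\ref{prop:semilattice}.
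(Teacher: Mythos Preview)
Your reduction of the second statement to the first is correct and matches the paper: $\Psh{\cat{C}}$ is a cocomplete topos, hence regular, and each $F \ootimes (-)$ is cocontinuous.

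For the first statement, however, your plan has two genuine gaps.

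\textbf{Circular use of Lemma~\ref{lem:subunitsorder}.} You write that ``each $S_i \otimes s$ and $s \otimes S_i$ is invertible once $s_i \leq s$'' by that lemma. But Lemma~\ref{lem:subunitsorder} is stated for a subunit factoring through \emph{another subunit}, and its proof uses idempotence of the larger one. Here $s$ is only a subobject of $I$; that it is a subunit is precisely what you are trying to establish. From $s_i \leq s$ you only get that $S_i \otimes s$ is \emph{split epic} (precompose with $S_i \otimes (e \circ c_i)$ and use that $S_i \otimes s_i$ is invertible), not invertible. And pushing a split epi $C \otimes s$ along the regular epi $e \otimes S$ only shows $S \otimes s$ is epi, not split epi.

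\textbf{Unjustified monicity.} You then claim $s \otimes S$ is monic ``by firmness (which follows, e.g.\ from $\cat{C}$ being co-closed --- but in any case one shows directly it is monic)''. Neither firmness nor co-closedness is among the hypotheses, and there is no evident direct argument: $(-)\otimes S$ is only assumed cocontinuous, which says nothing about preserving monomorphisms.

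The paper's proof sidesteps both issues. Rather than analysing $S_i \otimes s$, it shows that the composite $C \otimes c \colon C \otimes C \to C$ (with $c = s \circ e$) is a \emph{regular} epimorphism: factor $S_i \otimes c$ as regular epi followed by mono, observe the mono becomes split epic after precomposing with $S_i \otimes c_i$ because $S_i \otimes s_i$ is invertible, hence is an isomorphism; then $C \otimes c \simeq \coprod_i (S_i \otimes c)$ is regular epi. Separately $e \otimes e$ is regular epi by cocontinuity. One then has two regular-epi--mono factorisations of the same morphism $C \otimes C \to I$, and the mediating map $\lambda_S \circ (S \otimes s)$ between the images is an isomorphism by uniqueness of such factorisations in a regular category. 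This argument never needs $s \otimes S$ to be monic on its own, nor $S_i \otimes s$ to be invertible.
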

\begin{proof}
  In cocomplete regular categories, the subobjects of a fixed object form a complete lattice~\cite[Proposition~4.2.6]{borceux:1}. Explicitly, let $s_i \colon S_i \rightarrowtail I$ be a family of subunits.
  Choose a coproduct $c_i \colon S_i \to C$.
  The unique mediating map $C \to I$ factors through a monomorphism $\bigvee s_i \colon S \rightarrowtail I$, which is the supremum.
  \[\begin{tikzpicture}[xscale=2,yscale=1]
    \node (Si) at (0,2.5) {$S_i$};
    \node (Sj) at (2,2.5) {$S_j$};
    \node (C) at (1,2) {$C$};
    \node (S) at (1,1) {$S$};
    \node (I) at (1,0) {$I$};
    \draw[->] (Si) to node[below]{$c_i$} (C);
    \draw[->] (Sj) to node[below]{$c_j$} (C);
    \draw[>->] (Si) to[out=-90,in=150] node[left]{$s_i$} (I);
    \draw[>->] (Sj) to[out=-90,in=30] node[right]{$s_j$} (I);
    \draw[->>,dashed] (C) to node[right]{$e$} (S);
    \draw[>->,dashed] (S) to node[right]{$s$} (I);
  \end{tikzpicture}\]

  Next we show that $\bigvee s_i$ is a subunit. Let $c = s \circ e \colon C \to I$. We claim that
  \[\begin{pic}[xscale=5]
    \node (tl) at (0,1) {$C \otimes C$};
    \node (tr) at (1,1) {$C$};
    \node (bl) at (.5,0) {$\coprod_i S_i \otimes C$};
    \draw[->] (tl) to node[above]{$C \otimes c$} (tr);
    \draw[->] (tl) to node[left=3mm]{$\simeq$} (bl);
    \draw[->] (bl) to node[right, pos=0.1]{$\ \ \ \ \ \coprod_i (S_i \otimes c)$} (tr);
  \end{pic}\]
  is a regular epimorphism. Since colimits commute with colimits, it suffices to check that each $S_i \otimes c$ is a regular epimorphism. But this is so: if $S_i \otimes c = m \circ f$ for some regular epimorphism $f$ and monomorphism $m$, then
  $m \circ f \circ (S_i \otimes c_i) = (S_i \otimes c) \circ (S_i \otimes c_i) = S_i \otimes s_i$ is an isomorphism by idempotence of $s_i$, so that $m$ is split epic as well as monic and hence an isomorphism. 

  Now the topmost two rectangles in the following diagram commute.
  \[\begin{tikzpicture}[xscale=4,yscale=1.25]
    \node (Si) at (.5,2.5) {$S_i$};
    \node (C) at (1,2) {$C$};
    \node (S) at (1,1) {$S$};
    \node (I) at (1,0) {$I$};
    \node (II) at (2,0) {$I \otimes I$};
    \node (SS) at (2,1) {$S \otimes S$};
    \node (CC) at (2,2) {$C \otimes C$};
    \node (SiSi) at (2.5,2.5) {$S_i \otimes S_i$};
    \draw[->] (SiSi) to node[above]{$S_i \otimes s_i$} (Si);
    \draw[->>] (CC) to node[above]{$C \otimes c$} (C);
    \draw[->] (SS) to node[above]{$\lambda_S \circ (S \otimes s)$} (S);
    \draw[->] (II) to node[below]{$\lambda_I$} (I);
    \draw[>->] (S) to node[left]{$s$} (I);
    \draw[->>] (C) to node[left]{$e$} (S);
    \draw[>->] (SS) to node[right]{$s \otimes s$} (II);
    \draw[->>] (CC) to node[right]{$e \otimes e$} (SS);
    \draw[->] (Si) to node[below]{$c_i$} (C);
    \draw[->] (SiSi) to node[right=2mm,pos=.9]{$c_i \otimes c_i$} (CC);
    \draw[->] (SiSi) to[out=-90,in=0,looseness=.4] node[right]{$s_i \otimes s_i$} (II);
    \draw[->] (Si) to[out=-90,in=180,looseness=.5] node[left]{$s_i$} (I);
  \end{tikzpicture}\]
  The left and right triangles commute by construction, and the bottom rectangle commutes by bifunctoriality of the tensor and naturality of $\lambda$. Because $e$ is a coequaliser, so are $C \otimes e$ and $e \otimes S$, and hence so is $e \otimes e$. Therefore both vertical morphisms factor as regular epimorphisms followed by monomorphisms, and the mediating morphism, which must be $\lambda_S \circ (S \otimes s)$ by uniqueness, is an isomorphism. 
  Thus $S \otimes s$ is an isomorphism, as required.

  The second statement now follows, because $\Psh{\cat{C}}$ is regular and cocomplete, and the functors $F \ootimes (-)$ are cocontinuous~\cite{imkelly:day}.
\end{proof}

However, the subunits in $\Psh{\cat{C}}$ are in general not well behaved. 

{
\begin{proposition} \label{prop:PshFirmCounter}
  Consider the commutative monoid $M=[0,1) \times [0,\infty)$ under 
  \[
    (a,b) + (c,d) = 
    \begin{cases} 
      (a+c, b+d) & \text{ if } a + c < 1 \\ 
      (a + c - 1, b + d + 1) & \text{ if } a + c \geq 1
    \end{cases}
  \]
  with unit $(0,0)$. Then $M$ is a firm one-object category, but $\Psh{M}$ is not firm.
\end{proposition}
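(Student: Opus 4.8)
The plan is to handle the two assertions in turn.

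\emph{$M$ is a firm one-object category.} I would regard $M$ as the one-object (strict, symmetric) monoidal category obtained by delooping the commutative monoid, so that the unique object is the tensor unit $I$, and composition and tensor of morphisms are both the monoid operation. Writing $\phi(a,b)=a+b$, which is a monoid homomorphism $M\to[0,\infty)$ vanishing only at $(0,0)$, the only invertible element of $M$ is $(0,0)$ (if $s$ is invertible then $\phi(s)+\phi(s^{-1})=0$, so $\phi(s)=0$). Hence the only endomorphism $s$ of $I$ for which $s\otimes S$ is invertible is $\id[I]$ itself, so $\ISub(M)=\{I\}$ and the firmness condition is vacuous. (A short case analysis on whether a carry occurs shows $M$ is cancellative; this is not needed here, but it guarantees that distinct elements of $M$ represent distinct subobjects of $I$.)

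\emph{$\Psh{M}$ is not firm.} First I would identify $\Psh{M}=[M\op,\cat{Set}]$ with the category of $M$-sets: the Yoneda embedding sends the object to $M$ acting on itself by addition, Day convolution becomes the tensor product $\ootimes=\otimes_M$ of $M$-sets, and the tensor unit $I$ becomes $M$. Subobjects of $I$ are then exactly the sub-$M$-sets $S\subseteq M$, i.e.\ the subsets with $S+M\subseteq S$; and such an $S$ is a subunit precisely when $S+S=S$ and the canonical surjection $S\otimes_M S\to S$, $[s,s']\mapsto s+s'$, is injective. The aim is to produce two such subunits $s,t$ for which $s\otimes t\colon S\otimes_M T\to I\otimes_M T\simeq T$ is not monic, that is, two pairs $(s_1,t_1),(s_2,t_2)\in S\times T$ with $s_1+t_1=s_2+t_2$ in $M$ that are not identified in $S\otimes_M T$ — equivalently, not connected by the mass-transfer moves $(sm,t)\sim(s,tm)$ that generate $\otimes_M$, subject to both coordinates staying inside $S$ and $T$.

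The crux, and the step I expect to be hardest, is selecting the right pair of subunits and the explicit witness. I would analyse the subunits of $\Psh{M}$ through the carry structure of the addition, the point being that $\phi$ is only a quotient map and is far from injective — for instance $(0,\tfrac12)$ and $(\tfrac12,0)$ have the same $\phi$-value and even the same square $(0,1)$. Because of this, a nonempty idempotent sub-$M$-set must contain elements of arbitrarily small $\phi$-value and hence a whole family of principal up-sets shrinking to the origin, and the \emph{direction} of the shrinking (purely along the $[0,\infty)$-factor, purely along the $[0,1)$-factor, or obliquely) yields incomparable subunits — none of which has any analogue in $M$ viewed as a single object, which is exactly the source of the asymmetry. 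With two incomparable subunits $S,T$ in hand I would locate an element $u\in S+T$ admitting both a "carrying" and a "non-carrying" decomposition into $S\times T$, and verify that no chain of mass transfers confined to $S$ and $T$ can interchange them; the delicate bookkeeping of which carries such a confined chain can or cannot create is the main obstacle, and it is what separates $\Psh{M}$ from the firm categories $\Psh{\catC}$ arising from better-behaved $\catC$.
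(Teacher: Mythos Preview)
Your treatment of why $M$ is firm is correct and matches the paper's one-line argument: the identity is the only invertible element, hence the only subunit, so firmness is vacuous.

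For $\Psh{M}$, your translation to $M$-sets and your reading of where non-firmness should come from---two incomparable subunits arising from ``shrinking toward the origin'' along the two factors, with the carry structure obstructing mass-transfer between them---is exactly the idea, and the paper's subunits are precisely the ones you gesture at: $S$ is the ideal generated by $\{(a,0):a>0\}$ and $T$ the ideal generated by $\{(0,b):b>0\}$.

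However, your proposal remains a plan rather than a proof. You do not name $S$ and $T$ explicitly, do not verify that they are subunits (which requires checking that $S\otimes_M S\to S$ is injective, not merely that $S+S=S$), and do not produce the witness pair or carry out the zigzag analysis you yourself flag as the main obstacle. These steps are the substance of the argument.

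The paper also organises the contradiction differently from your direct-witness plan. Instead of exhibiting two inequivalent pairs in $S\otimes_M T$ with equal sum, it argues that were $\Psh{M}$ firm, $S\ootimes T$ would coincide with $S\cap T$ and be a subunit, hence an idempotent ideal; a short case split on whether a carry occurs then shows that $(0,1)\in S\cap T$ admits no decomposition as a sum of two elements of $S\cap T$. The careful bookkeeping you anticipate is still required, but it appears in verifying that $S$ and $T$ are subunits---one reduces an arbitrary element of $S\ootimes S$ to a canonical form $((b,c),(a,0),(a,0))$ with $a$ arbitrarily small and then checks uniqueness---rather than in the contradiction step itself. Your direct route would also work in principle, but would not spare you this verification, and would replace the paper's short non-idempotence check with the longer zigzag argument you describe.
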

}
\begin{proof}
  The identity $(0,0)$ represents the only subunit of the one-object category $M$, which is therefore firm.
  \appref proves that $\Psh{M}$ is not firm.
\end{proof}

Moreover, $\Psh{\cat{C}}$ may have subunits that are not suprema of subunits of $\cat{C}$.

{
\begin{proposition}\label{prop:daycounterexample}
  In general $\ISub(\Psh{\cat{C}})$ is not the free frame on $\ISub(\cat{C})$.
\end{proposition}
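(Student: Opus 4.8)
The plan is to produce a counterexample, reusing the one-object category $M$ of Proposition~\ref{prop:PshFirmCounter}: it is firm, but $\Psh{M}$ is not. I would first compute $\ISub(M)$. Since $M$ has a single object $I$, a subunit is represented by a (necessarily left-cancellable) element $s$ of the monoid for which $s \otimes \mathrm{id}_I$, which in a strict one-object monoidal category is just $s$ itself, is an isomorphism; that is, by an invertible element of the monoid. The only invertible element of $M=[0,1)\times[0,\infty)$ is its unit $(0,0)$, so $\ISub(M)=\{I\}$ is the terminal semilattice. Next I would identify the free frame on $\{I\}$: a semilattice homomorphism from the terminal semilattice into any frame is the unique map sending $I$ to the top element, so the free frame on $\{I\}$ is the initial object of $\cat{Frame}$, namely the two-element frame $\mathbf{2}=\{0<1\}$.

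It then remains to show that $\ISub(\Psh{M})$ is not, via the canonical comparison map, isomorphic to this $\mathbf{2}$, for which it suffices that $\ISub(\Psh{M})$ has more than two elements. The empty presheaf $0$ is always a subunit of $\Psh{M}$: the inclusion $0\rightarrowtail I=\cat{C}(-,I)$ is monic since it is pointwise injective, and because Day convolution is cocontinuous in each variable the functor $(-)\ootimes 0$ preserves the initial object, so $0\ootimes 0\to I\ootimes 0$ is a map between initial objects, hence invertible. Moreover $0\neq I$, since the representable presheaf $I$ is non-empty at the object $I$. So $\ISub(\Psh{M})\supseteq\{0,I\}$ with $0\neq I$. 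Finally, if these were the \emph{only} subunits then $\Psh{M}$ would be firm: for any pair $s,t\in\{0,I\}$ the morphism $s\ootimes T$ is monic, being an identity up to coherence isomorphisms when $t=I$ or $s=I$, and a morphism between two copies of the initial object---hence an isomorphism---when $s=t=0$. This contradicts Proposition~\ref{prop:PshFirmCounter}, so $\ISub(\Psh{M})$ has at least three elements and cannot equal $\mathbf{2}$.

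The main obstacle is conceptual rather than computational: the key realisation is that non-firmness of $\Psh{M}$ can be exploited directly, because a category whose only subunits are $0$ and $I$ is automatically firm, so $\Psh{M}$ must acquire at least one subunit that is \emph{not} a join of subunits pulled back from $M$. With that in hand, the substantive work---that $\Psh{M}$ fails to be firm---has already been carried out in Proposition~\ref{prop:PshFirmCounter} and \appref, and everything else is bookkeeping about the terminal semilattice, the initial frame, and the initial presheaf. One should take minor care that ``free frame'' is read relative to the semilattice (equivalently here the one-point poset) structure on $\ISub(M)$; since that structure is as small as possible, every reasonable free-frame construction on it is a finite chain, and should one prefer a reading giving a three-element chain rather than $\mathbf{2}$, the argument still goes through by noting that $\ISub(\Psh{M})$ cannot be a chain at all: comparable subunits $s\leq t$ always have $s\otimes T$ monic (using Lemma~\ref{lem:subunitsorder} to see that $S\otimes t$ is invertible, together with monicity of $I\otimes t$ and $s\otimes I$ and the identity $(I\otimes t)\circ(s\otimes T)=(s\otimes I)\circ(S\otimes t)$), so the incomparable subunits witnessing non-firmness of $\Psh{M}$ prevent $\ISub(\Psh{M})$ from being linearly ordered.
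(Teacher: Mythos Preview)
Your argument is correct and is genuinely different from the paper's. The paper works with the quantale $Q=[0,\infty]$ under addition and the reverse order: there $\ISub(Q)=\{0,\infty\}$, so the free frame (its three downsets) has three elements, whereas the description of subunits in $\Psh{Q}$ from the appendix yields at least four distinct ones, including $(0,\infty]$ and $[0,\infty]$. The paper's proof is thus constructive: it explicitly exhibits the surplus subunits.

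Your route instead recycles the monoid $M$ of Proposition~\ref{prop:PshFirmCounter} and argues by contradiction through firmness: a category whose only subunits are $0$ and $I$ is automatically firm, so the non-firmness of $\Psh{M}$ forces a third subunit. This is elegant because it turns the hard work already done in the appendix into a one-line cardinality obstruction, and your chain remark even extracts a stronger conclusion than the paper states, namely that $\ISub(\Psh{M})$ contains incomparable elements. The trade-off is that your argument is non-constructive and leans entirely on Proposition~\ref{prop:PshFirmCounter}, whereas the paper's example is smaller and the extra subunits are written down directly. Both are valid; the paper's has the advantage of being readable without tracing through the appendix computation for $M$, while yours shows that the two ``pathologies'' of $\Psh{\cat{C}}$ (failure of firmness, and surplus subunits) are not independent phenomena.
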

}
\begin{proof}
  Consider a commutative unital quantale $Q$ as a firm category. By their description in \appref, any subunit in $\widehat{Q}$ is given by a suitable downward closed subset $S \subseteq\downset \qunit \subseteq Q$ such that $\forall x \in S\, \exists y,z \in S \colon x \leq yz$, and to be a subunit it suffices for $S$ to be directed. 

  In particular, take $Q=[0,\infty]$ under the opposite order and addition.
  Then $\ISub(Q) = \{ 0, \infty \}$, whose free completion to a frame is its collection of downsets
  $
    \big\{ \emptyset, \{\infty\}, \{0,\infty\} \big\}
  $.
  However, by the above description of subunits in $\widehat{Q}$ it is easy to see that 
  $
    \ISub(\widehat{Q}) \supseteq \big\{ \emptyset, \{\infty\}, [0,\infty], (0,\infty] \big\}
  $.
\end{proof}

Instead, to complete $\ISub(\cat{C})$ to a distributive lattice, preframe, or frame, we will consider certain full subcategories of $\Psh{\cat{C}}$. 

\begin{definition}\label{def:broad}
  A presheaf on a braided monoidal category $\cat{C}$ is \emph{(finitely, directedly) broad} when it is naturally isomorphic to one of the form
  \[
    \broad{U}{X} \colon A \mapsto \{ f \colon A \to X \mid \text{$f$ restricts to some $s \in U$} \}
  \]
  for a (finitely bounded, directed) family $U$ of subunits and an object $X$.

  Write $\Spsh{\cat{C}}$ ($\FUsh{\cat{C}}$, $\DUsh{\cat{C}}$) for the full subcategory of (finitely, directedly) broad presheaves.
  We will also write $\widehat{U}$ for $\broad{U}{I}$, and $\widehat{X}$ for $\broad{\{1\}}{X}$.
\end{definition}

We will see below that the broad presheaves are precisely the colimits of the diagrams $D(\{\hat{s} \mid s \in U\}, \hat{X})$, and leave open the possibility of characterising when a given presheaf is broad in terms not referring to $U$ or $X$.

The following lemma shows that broad presheaves are closed under (Day) tensor products and so form a monoidal category.

\begin{lemma}\label{lem:broadtensor}
  For any objects $X$, $Y$ and families of subunits $U$, $V$ in a stiff category $\cat{C}$, there is a (unique) natural isomorphism making
  \begin{equation} \label{eq:day-useful}
   \begin{pic}[xscale=4,yscale=1.5]
    \node (tl) at (0,1) {$\broad{U}{X} \ootimes \broad{V}{Y}$};
    \node (tr) at (0,0) {$\widehat{X} \ootimes \widehat{Y}$};
    \node (bl) at (1,1) {$\broad{U \otimes V}{X \otimes Y}$};
    \node (br) at (1,0) {$\widehat{X \otimes Y}$};
    \draw[->] (tl) to node[left]{$u \ootimes v$} (tr);
    \draw[->,dashed] (tl) to node[above]{$\simeq$} (bl);
    \draw[->] (tr) to node[above]{$\simeq$} (br);
    \draw[>->] (bl) to (br);
   \end{pic}  
  \end{equation}
  commute, where $U \otimes V = \{ s \wedge t \mid s \in U, t \in V \}$, and $u, v$ are the inclusions. 
\end{lemma}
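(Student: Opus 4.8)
The plan is to realise the desired isomorphism as a corestriction of the composite along the top of~\eqref{eq:day-useful} and then to check it is an isomorphism by a coend computation. Recall that the Day tensor is $F \ootimes G(C) = \int^{A,B} F(A) \times G(B) \times \cat{C}(C, A \otimes B)$, with elements written $[f,g,h]$ for $f \in F(A)$, $g \in G(B)$, $h \colon C \to A \otimes B$, modulo dinaturality; since the Yoneda embedding is strong monoidal, the lower isomorphism $\widehat{X} \ootimes \widehat{Y} \simeq \widehat{X \otimes Y}$ of~\eqref{eq:day-useful} sends $[f,g,h]$ to $(f \otimes g) \circ h$. Precomposing with the inclusions $u$, $v$, the top composite $\Phi$ sends $[f,g,h]$ --- now with $f$ restricting to some $s \in U$ and $g$ to some $t \in V$ --- to $(f \otimes g) \circ h$. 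By Lemma~\ref{lem:supportin} the morphism $f \otimes g$ restricts to $s \wedge t \in U \otimes V$, and restriction is preserved by precomposition, so $(f \otimes g) \circ h$ restricts to $s \wedge t$ as well. Hence $\Phi$ factors uniquely through the monomorphism $\broad{U \otimes V}{X \otimes Y} \rightarrowtail \widehat{X \otimes Y}$, which both produces the commuting square and reduces the lemma to showing that the induced natural transformation $\bar\Phi \colon \broad{U}{X} \ootimes \broad{V}{Y} \to \broad{U \otimes V}{X \otimes Y}$ is an isomorphism (uniqueness of the dashed map then being automatic, as that inclusion is monic). This can be verified componentwise.

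First I would reduce to a convenient shape for $U$ and $V$. The monotonicity of restriction (a morphism restricting to $r \leq s$ also restricts to $s$) shows that a morphism restricts to some element of $U$ exactly when it does so for some element of the downward closure $\downset U$, whence $\broad{U}{X} = \broad{\downset U}{X}$, and likewise $\broad{U \otimes V}{X \otimes Y} = \broad{(\downset U) \otimes (\downset V)}{X \otimes Y}$. So we may assume $U$ and $V$ are downward closed, hence idempotent (since $r_1 \wedge r_2 \leq r_1$); finite boundedness and directedness of $U$ survive this replacement.

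For surjectivity of $\bar\Phi$ at $C$, write $\bar f_s = \lambda_X \circ (s \otimes X) \colon S \otimes X \to X$, which lies in $\broad{U}{X}(S \otimes X)$ since it restricts to $s$, and similarly $\bar g_t$. Given $k \colon C \to X \otimes Y$ restricting to $s \wedge t$ with $s \in U$, $t \in V$, write $k = \lambda_{X \otimes Y} \circ ((s \wedge t) \otimes (X \otimes Y)) \circ m$; coherence and the braiding provide an isomorphism $\beta \colon (S \otimes T) \otimes (X \otimes Y) \to (S \otimes X) \otimes (T \otimes Y)$ with $(\bar f_s \otimes \bar g_t) \circ \beta = \lambda_{X \otimes Y} \circ ((s \wedge t) \otimes (X \otimes Y))$, so $k = \Phi([\bar f_s, \bar g_t, \beta \circ m])$.

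For injectivity I would first bring every element into the normal form $[\bar f_s, \bar g_t, h]$: if $f$ restricts to $s$ then $f = \bar f_s \circ p$, and dinaturality gives $[f,g,h] = [\bar f_s, g, (p \otimes B) \circ h]$, similarly for $g$. Two normal forms with the same index pair $(s,t)$ and the same $\bar\Phi$-image coincide, because $\bar f_s \otimes \bar g_t$ is monic: up to coherence isomorphisms it is $(s \wedge t) \otimes (X \otimes Y)$ precomposed and postcomposed with isomorphisms, and $(s \wedge t) \otimes (X \otimes Y)$ is monic by stiffness. Finally, given normal forms $[\bar f_s, \bar g_t, h_1]$ and $[\bar f_{s'}, \bar g_{t'}, h_2]$ with common image $k$: this $k$ restricts to $s \wedge t$ and to $s' \wedge t'$, hence --- since in a stiff category the pullback~\eqref{eq:stiff-pullback} with $X := B$ exhibits $(S \otimes T) \otimes B \rightarrowtail B$ as the meet in $\Sub(B)$ of $S \otimes B \rightarrowtail B$ and $T \otimes B \rightarrowtail B$ --- also to $\sigma \wedge \tau$, where $\sigma = s \wedge s' \in U$ and $\tau = t \wedge t' \in V$ by downward closure; write $k = \Phi([\bar f_\sigma, \bar g_\tau, h_0])$. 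Since $\bar f_\sigma = \bar f_s \circ (\iota \otimes X)$ for the inclusion $\iota \colon \Sigma \rightarrowtail S$ (and similarly for $\Sigma \rightarrowtail S'$, $T_\tau \rightarrowtail T$, $T_\tau \rightarrowtail T'$), dinaturality rewrites $[\bar f_\sigma, \bar g_\tau, -]$ both as a normal form with index $(s,t)$ and as one with index $(s',t')$, changing only the third coordinate; invoking the fixed-index case via the common image $k$ then gives $[\bar f_s, \bar g_t, h_1] = [\bar f_\sigma, \bar g_\tau, h_0] = [\bar f_{s'}, \bar g_{t'}, h_2]$. Naturality of all maps in $C$, $X$, and $Y$ is routine. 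The main obstacle is this last injectivity step: keeping track of the coend relations alongside the coherence isomorphisms needed to move between normal forms indexed by different subunits.
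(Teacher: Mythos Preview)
Your proposal is correct and follows essentially the same route as the paper's proof in the appendix: both reduce to idempotent $U$, $V$, factor the composite through the inclusion $\broad{U \otimes V}{X \otimes Y} \rightarrowtail \widehat{X \otimes Y}$, prove surjectivity by constructing an explicit preimage, and prove injectivity by normalising coend triples to the shape $(h, s \otimes X, t \otimes Y)$ and then passing to the common refinement $(s \wedge s', t \wedge t')$ via dinaturality. Your argument is slightly more explicit than the paper's about where stiffness enters (monicness of $\bar f_s \otimes \bar g_t$ and the pullback~\eqref{eq:stiff-pullback} exhibiting meets), but the structure is the same.
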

\begin{proof}
  See \appref. 
\end{proof}

We now describe the subunits in each completion.

\begin{proposition}\label{prop:broadidempotent}
  If $\cat{C}$ is stiff, the subunits in $\Spsh{\cat{C}}$ ($\FUsh{\cat{C}}$, $\DUsh{\cat{C}}$) are the presheaves of the form $\widehat{U}$ for (finitely bounded, directed) $U \subseteq \ISub(\cat{C})$. 
\end{proposition}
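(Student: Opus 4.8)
The strategy is to transfer everything to the ambient presheaf category $\Psh{\cat{C}}$. The full subcategory inclusion of broad presheaves is (strong) monoidal with unit $\widehat{I}$ and tensor inherited from the Day convolution by Lemma~\ref{lem:broadtensor}; hence it is faithful, so reflects monomorphisms and reflects isomorphisms, and moreover the representable presheaves $\widehat{A} = \broad{\{1\}}{A}$ are broad (indeed finitely and directedly broad) and jointly detect monomorphisms in $\Psh{\cat{C}}$.

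First, that each $\widehat{U}=\broad{U}{I}$ with $U\subseteq\ISub(\cat{C})$ of the required flavour is a subunit. As a subobject of $\widehat{I}=\cat{C}(-,I)$ it is the pointwise union $\bigvee_{s\in U}\widehat{s}$ of the subunits $\widehat{s}$ of $\Psh{\cat{C}}$ induced by the (monoidal, mono-preserving) Yoneda embedding, so by Proposition~\ref{prop:colimitspresheaves} it is again a subunit of $\Psh{\cat{C}}$. Since the restriction relation is monotone along the subunit order (Lemma~\ref{lem:subunitsorder}, Proposition~\ref{prop:supportofobject}), $\broad{W}{I}$ depends only on $\downset W$; and since every subunit of $\cat{C}$ is $\wedge$-idempotent we have $U\subseteq U\otimes U\subseteq\downset U$, so $\broad{U\otimes U}{I}=\widehat{U}$. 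Combined with Lemma~\ref{lem:broadtensor} this gives $\widehat{U}\ootimes\widehat{U}\cong\broad{U\otimes U}{I\otimes I}\cong\widehat{U}$ and $\widehat{I}\ootimes\widehat{U}\cong\widehat{U}$, all of which lie in $\Spsh{\cat{C}}$ (and in $\FUsh{\cat{C}}$, $\DUsh{\cat{C}}$ when $U$ is finitely bounded, resp.\ directed). Hence the isomorphism witnessing that $\widehat{U}$ is a subunit of $\Psh{\cat{C}}$ is a morphism of broad presheaves, and, since the inclusion reflects monomorphisms and isomorphisms, $\widehat{U}$ is a subunit of $\Spsh{\cat{C}}$ (resp.\ $\FUsh{\cat{C}}$, $\DUsh{\cat{C}}$).

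Conversely, let $P\rightarrowtail\widehat{I}$ be a subunit of $\Spsh{\cat{C}}$ (resp.\ $\FUsh{\cat{C}}$, $\DUsh{\cat{C}}$). Testing the monomorphism against representables (via Yoneda) shows it is pointwise injective, hence a monomorphism of $\Psh{\cat{C}}$; since the inclusion is full, faithful and monoidal, the witnessing isomorphism also transfers, so $P$ is a subunit of $\Psh{\cat{C}}$. Being (finitely, directedly) broad, $P\cong\broad{V}{X}$ for a (finitely bounded, directed) family $V$ of subunits of $\cat{C}$ and an object $X$. Now the description of subunits of $\Psh{\cat{C}}$ obtained in \appref{} — as certain idempotent sieves on $I$ — together with $P\cong\broad{V}{X}$ pins down $P$ as $\widehat{U}$ for $U=\downset\{\,s\in\ISub(\cat{C})\mid\widehat{s}\leq P\,\}$: the inclusion $\widehat{U}\leq P$ holds because $\ISub(\Psh{\cat{C}})$ is a complete lattice (Proposition~\ref{prop:colimitspresheaves}) with $\widehat{U}=\bigvee_{s\in U}\widehat{s}$, while the reverse inclusion — that every morphism of the sieve $P$ restricts to some $s$ with $\widehat{s}\leq P$ — follows from broadness of $P$ together with idempotence of the sieve. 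Finally, tracking the presentation $\broad{V}{X}$ through this identification shows that $\downset U$ is finitely generated when $V$ is finitely bounded, and directed when $V$ is, so $U$ may be taken of the required flavour.

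The crux is this last direction: matching an abstractly-given subunit $P$ with the concrete shape $\widehat{U}$, and in particular verifying that the finitely-bounded / directed qualifier survives passage from the presentation $\broad{V}{X}$ to the canonical $U$. This is exactly where the Day-convolution analysis of broad presheaves, and of subunits in $\Psh{\cat{C}}$, carried out in \appref{} does the real work; stiffness of $\cat{C}$ enters through Lemma~\ref{lem:broadtensor} and the coherence of the diagrams $D(U,X)$ underlying it.
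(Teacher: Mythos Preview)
Your forward direction is fine and essentially matches the paper, which dispatches it in one line.

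The converse has a genuine gap. You correctly reduce to the situation of a subunit $\eta \colon P \rightarrowtail \widehat{I}$ in $\Psh{\cat{C}}$ with $P \simeq \broad{V}{X}$, and you set $U = \downset\{s \in \ISub(\cat{C}) \mid \widehat{s} \leq P\}$. The crux, as you say, is the inclusion $P \leq \widehat{U}$: every element of the sieve $P$ must restrict to some subunit $s$ of $\cat{C}$ with $\widehat{s} \leq P$. You defer this to the appendix, but the appendix does not establish it. The appendix only records that a subunit of $\Psh{\cat{C}}$ is a sieve $S$ on $I$ such that every $f \in S(A)$ factors uniquely as $\rho_I \circ (x \otimes y) \circ h$ with $x,y$ in $S$. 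Nothing there says that such $x$ or $y$, or any element of the sieve, is a subunit of $\cat{C}$; an arbitrary morphism $B \to I$ in an idempotent sieve need not be monic, let alone idempotent. Producing actual subunits in $\cat{C}$ from the data $(V,X,\eta)$ is precisely the work that must be done, and your argument does not do it.

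The paper's proof confronts this head-on. For each $s \in V$ it defines $s_X := \eta_{S \otimes X}(s \otimes X) \colon S \otimes X \to I$ and proves directly that $s_X \in \ISub(\cat{C})$: stiffness makes $s \otimes X$ monic, so injectivity of $\eta$ makes $s_X$ monic; then idempotence of $\eta$ is unpacked as a concrete bijection $\broad{V}{X \otimes X}(A) \to \broad{V}{X}(A)$, from which one extracts that $(S \otimes X) \otimes s_X$ is invertible. Setting $U' = \{s_X \mid s \in V\}$ and checking $\broad{V}{X} \simeq \widehat{U'}$ is then straightforward. This also immediately handles the qualifier: $U'$ is indexed by $V$, so it inherits being finitely bounded or directed without the tracking argument you leave unspecified.
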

\begin{proof}
  Clearly $\widehat{U}$ is a subunit. Conversely, if $\eta \colon \broad{U}{X} \to \widehat{I}$ is a subunit then
  \[
    s_X = \eta_{S \otimes X}(s \otimes X) \colon S \otimes X \to I
  \]
  will be proven to be a subunit in $\cat{C}$ for each $s \in U$.

  Given this, let $U' = \{ s_X \mid s \in U \}$, noting that $\widehat{U'}$ again belongs to each respective category, and consider the function $\broad{U}{X}(A) \to \broad{U'}{I}(A)$ given by $((s \otimes X) \circ f) \mapsto s_X \circ f$. It is surjective by definition of $U'$, clearly natural, and is well-defined and injective since 
  \begin{align*}
    s_X \circ f = s'_X \circ f'
    & \iff \eta(s \otimes X) \circ f = \eta(s' \otimes X) \circ f' \\
    & \iff \eta((s \otimes X) \circ f)) = \eta((s' \otimes X) \circ f') \\
    & \iff (s \otimes X) \circ f =(s' \otimes X) \circ f'
  \end{align*}
  by naturality and injectivity of $\eta$. 

  Let us show that $s_X$ is indeed a subunit. By stiffness of $\cat{C}$ each morphism $(s \otimes X)$ is monic, and so by the above argument $s_X$ is, too.

  Next we show $s_X \otimes S \otimes X$ is invertible. 
  Notice that $\broad{U}{X} = \broad{\downset U}{X}$, so we may assume that $U$ is idempotent. The fact that $\eta$ is a subunit means precisely that each map
  \begin{align}\label{eq:idempotentpresheafbijection}\tag{$*$}
    \broad{U}{X \otimes X}(A) & \to \broad{U}{X}(A) \notag\\
    (s \otimes (X \otimes X)) \circ f & \mapsto (s_X \otimes X) \circ f
  \end{align}
  is a well-defined bijection, where $f \colon A \to S \otimes X \otimes X$ and $s \in U$.

  Now note that $S \otimes s_X \otimes X$ is monic, since by injectivity of~\eqref{eq:idempotentpresheafbijection}, $s_X \otimes X$ is monic, and it is easy to see from stiffness that for any subunit $s$ and monic $m$ that $S \otimes m$ is again monic. 
  Moreover it is split epic and hence an isomorphism, since by surjectivity of~\eqref{eq:idempotentpresheafbijection} there is some $f$ with $(s_X \otimes X) \circ f = s \otimes X$, and $S \otimes (s \otimes X)$ is always split epic by idempotence of $s$. 
\end{proof}

For any semilattice, as well as its downsets forming its free completion to a frame, recall that its free completion to a preframe is given by its collection of directed downsets~\cite[Theorem~9.1.5]{vickers:topology}; and that its free completion to a distributive lattice is given by its finitely bounded downsets~\cite[I.4.8]{johnstone:stonespaces}, with (directed, finite) joins given by unions. 

\begin{corollary}
  The subunits in $\FUsh{\cat{C}}$, $\DUsh{\cat{C}}$, and $\Spsh{\cat{C}}$, are the free completion of $\ISub(\cat{C})$ to a distributive lattice, preframe, and frame, respectively.
\end{corollary}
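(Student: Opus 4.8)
The plan is to read the corollary off from Proposition~\ref{prop:broadidempotent} together with the descriptions of free completions recalled just above, so that the only real work is to check that the assignment $U \mapsto \widehat{U}$ is an order isomorphism which is moreover compatible with meets and with the canonical embedding of $\ISub(\catC)$. Two easy reductions come first. For any family $U$ of subunits, $\widehat{U} = \widehat{\downset U}$ as subpresheaves of $\widehat{I}$: if $f$ restricts to $r \leq t$ with $t \in U$, then writing $r = t \circ g$ gives $r \otimes B = (t \otimes B) \circ (g \otimes B)$, so $f$ restricts to $t$ as well; and passing to $\downset U$ preserves directedness and finite generation. Hence by Proposition~\ref{prop:broadidempotent} the subunits of $\Spsh{\catC}$ (respectively $\DUsh{\catC}$, $\FUsh{\catC}$) are precisely the $\widehat{U}$ with $U$ a downset (respectively a directed downset, a finitely generated downset) of $\ISub(\catC)$.

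The key step is a Yoneda-style computation. For a single subunit $t$ one has $\broad{\{t\}}{I} = \widehat{\downset t}$, and as a subobject of $\widehat{I}$ this is exactly the subunit $t \circ (-)$ induced by $t$, since naturality of $\lambda$ gives $\lambda_I \circ (t \otimes I) = t \circ \lambda_T$, so that $f$ restricts to $t$ if and only if $f$ factors as $t \circ h$. As $\widehat{U} = \bigcup_{t \in U} \broad{\{t\}}{I}$, for a subunit $s$ the element $s \in \widehat{I}(S) = \catC(S,I)$ lies in $\widehat{U}(S)$ if and only if $s \leq t$ for some $t \in U$, i.e.\ if and only if $s \in \downset U$, using Lemma~\ref{lem:subunitsorder}. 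By the Yoneda lemma, the monomorphism $\widehat{\downset s} \rightarrowtail \widehat{I}$ factors through $\widehat{U} \rightarrowtail \widehat{I}$ exactly when this universal element lands in $\widehat{U}(S)$, hence exactly when $s \in \downset U$. For downsets $U, V$ this gives $\widehat{U} \leq \widehat{V}$ if and only if $U \subseteq V$, the converse direction being immediate from $\widehat{U}(A) \subseteq \widehat{V}(A)$; in particular $U \mapsto \widehat{U}$ is injective and order-reflecting. Thus $\ISub(\Spsh{\catC})$ is order-isomorphic to the poset of downsets of $\ISub(\catC)$, and $\ISub(\DUsh{\catC})$, $\ISub(\FUsh{\catC})$ to its posets of directed, respectively finitely generated, downsets.

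It remains to upgrade these to isomorphisms of the relevant algebraic structures. As recalled above, those three posets of downsets, with joins given by unions, are exactly the free completions of the semilattice $\ISub(\catC)$ to a frame, a preframe, and a distributive lattice. An order isomorphism respects all existing meets and joins, so one need only check compatibility with the canonical semilattice embeddings: the embedding $\ISub(\catC) \to \ISub(\Spsh{\catC})$ sends $s$ to the induced subunit $s \circ (-)$, which equals $\widehat{\downset s}$ and so corresponds to $\downset s$ under the isomorphism; and this is a morphism of semilattices because $\widehat{\downset s} \wedge \widehat{\downset t} \cong \widehat{\downset s} \ootimes \widehat{\downset t} \cong \widehat{\downset(s \wedge t)}$ by Proposition~\ref{prop:semilattice} and Lemma~\ref{lem:broadtensor}, while $\widehat{I} = \widehat{\downset I}$. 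The universal properties then transfer verbatim. The main obstacle I anticipate is precisely this Yoneda identification of the subobject order among the broad presheaves of $\widehat{I}$ with the inclusion order of downsets of $\ISub(\catC)$; once that is in place, the rest is routine bookkeeping against Proposition~\ref{prop:broadidempotent} and the cited free-completion descriptions.
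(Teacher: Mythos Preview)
Your proposal is correct and follows essentially the same route as the paper: reduce to downsets via $\widehat{U}=\widehat{\downset U}$, invoke Proposition~\ref{prop:broadidempotent}, and identify the subobject order with inclusion of downsets. The paper compresses your Yoneda computation into the single line ``it is easy to see that $\widehat{U}\leq\widehat{V}\iff U\leq\downset V$'' and does not spell out the compatibility of the embedding $s\mapsto\widehat{\downset s}$ with the semilattice structure, which you check explicitly; so your version is more detailed but not different in substance.
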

\begin{proof}
  For any $U, V \subseteq \ISub(\cat{C})$ it is easy to see that $\widehat{U} \leq \widehat{V} \iff U \leq {\downset V}$. In particular $\widehat{U} = \widehat{\downset\! U}$ as we have already noted. Hence by Proposition~\ref{prop:broadidempotent}, subunits in each category correspond to the respective kinds of downset $U \subseteq \ISub(\cat{C})$.
\end{proof}

Next let us note that each of our constructions are again stiff.

\begin{lemma}\label{lem:broadpresheavesstiff}
  If a monoidal category $\cat{C}$ is stiff, then so are $\DUsh{\cat{C}}$, $\FUsh{\cat{C}}$ and $\Spsh{\cat{C}}$.
\end{lemma}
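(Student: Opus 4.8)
The plan is to reduce stiffness of the broad-presheaf categories to a pointwise computation in $\Psh{\cat{C}}$, together with two elementary closure properties of the set of subunits to which a morphism of $\cat{C}$ restricts. First I would record that $\DUsh{\cat{C}}$, $\FUsh{\cat{C}}$ and $\Spsh{\cat{C}}$ are braided monoidal: they are full subcategories of $\Psh{\cat{C}}$, they contain the Day unit $\widehat{I} = \broad{\{1\}}{I}$ (every $f\colon A\to I$ restricts to $\id[I]$ since $\lambda_I$ is invertible), and they are closed under $\ootimes$ by Lemma~\ref{lem:broadtensor}, which moreover shows that the families $U\otimes V$ inherit finite-boundedness and directedness from $U$ and $V$. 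For stiffness, fix an object $P=\broad{W}{Z}$ and subunits; by Proposition~\ref{prop:broadidempotent} these are $\widehat{U}=\broad{U}{I}$ and $\widehat{V}=\broad{V}{I}$ for families $U,V$ of the appropriate kind. We must show the square~\eqref{eq:stiff-pullback}, built from $\widehat{U},\widehat{V},P$ and $\ootimes$, is a pullback of monomorphisms. Since all four vertices lie in the subcategory and the subcategory is full, it suffices to prove this in $\Psh{\cat{C}}$, where monomorphisms are exactly the pointwise injections and pullbacks are computed pointwise.

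Next I would apply Lemma~\ref{lem:broadtensor} twice to identify the four vertices, compatibly with the canonical maps into $\widehat{I}\ootimes\widehat{I}\ootimes\widehat{Z}\cong\widehat{Z}$, with $\broad{(U\otimes V)\otimes W}{Z}$, $\broad{U\otimes W}{Z}$, $\broad{V\otimes W}{Z}$ and $\broad{W}{Z}$, each a subpresheaf of the representable $\widehat{Z}$ and each map the evident inclusion. A commuting square of monomorphisms with cospan $\broad{U\otimes W}{Z}\to\broad{W}{Z}\leftarrow\broad{V\otimes W}{Z}$ and apex $\broad{(U\otimes V)\otimes W}{Z}$ is a pullback in $\Psh{\cat{C}}$ precisely when, at every object $A$, the inclusion
\[
  \broad{(U\otimes V)\otimes W}{Z}(A)\ \subseteq\ \broad{U\otimes W}{Z}(A)\cap\broad{V\otimes W}{Z}(A)
\]
of subsets of $\cat{C}(A,Z)$ is an equality. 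So the lemma comes down to this set-theoretic identity, uniformly for the three variants.

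To settle it, write $R(f)=\{s\in\ISub(\cat{C})\mid f\text{ restricts to }s\}$ for $f\in\cat{C}$, and use two facts. (i) $R(f)$ is upward closed: if $f$ factors through $\lambda_B\circ(s\otimes B)$ and $s=t\circ j$, then $f$ factors through $\lambda_B\circ(t\otimes B)$ via $(j\otimes B)$ composed with the original witness. (ii) $R(f)$ is closed under binary meets --- this is where stiffness of $\cat{C}$ is used: if $f\colon A\to B$ factors through both $\lambda_B\circ(s\otimes B)$ and $\lambda_B\circ(t\otimes B)$, then the pullback square~\eqref{eq:stiff-pullback} with $X=B$ yields a mediating map $A\to S\otimes T\otimes B$ and hence a factorisation of $f$ through $\lambda_B\circ((s\wedge t)\otimes B)$, so $f$ restricts to $s\wedge t$. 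Granting (i) and (ii): the forward inclusion of the identity holds because $(s\wedge t)\wedge r\leq s\wedge r\in U\otimes W$ and $(s\wedge t)\wedge r\leq t\wedge r'\in V\otimes W$, with upward closure; the reverse inclusion holds because, given witnesses $s\wedge r\in U\otimes W$ and $t\wedge r'\in V\otimes W$ for $f$, fact (ii) gives that $f$ restricts to $(s\wedge t)\wedge(r\wedge r')\leq(s\wedge t)\wedge r\in(U\otimes V)\otimes W$, and upward closure finishes it.

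The main obstacle I anticipate is bookkeeping rather than conceptual: keeping the coherence isomorphisms straight when transporting the stiffness square across the identifications furnished by Lemma~\ref{lem:broadtensor} (in particular, checking the transported edges really are the subpresheaf inclusions between the $\broad{\cdot}{Z}$), and verifying meet-closure of $R(f)$ at the level of the literal factorisations through $\lambda_B\circ(s\otimes B)$ rather than merely up to coherence. Neither is hard, but it is where care is needed.
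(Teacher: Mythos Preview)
Your proposal is correct and follows essentially the same route as the paper: use Lemma~\ref{lem:broadtensor} to identify the stiffness square with a square of inclusions among the subpresheaves $\broad{(U\otimes V)\otimes W}{Z}$, $\broad{U\otimes W}{Z}$, $\broad{V\otimes W}{Z}$, $\broad{W}{Z}$ of $\widehat{Z}$, and then check the pullback pointwise in $\cat{Set}$ using stiffness of $\cat{C}$. Your $R(f)$ formalism and use of upward closure are a slightly cleaner packaging of the same computation (the paper simply assumes $W$ idempotent to avoid your $r\wedge r'$ step), and your explicit remark that the categories are braided monoidal with unit $\widehat{I}$ is a sensible preliminary the paper leaves implicit.
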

\begin{proof}
  For any object $\broad{U}{X}$ and subunit $V \colon \widehat{V} \to \widehat{I}$ in $\Spsh{\cat{C}}$ we need to show that the morphism $\broad{U}{X} \otimes V$ is monic. This holds since the obvious morphism $\broad{U}{X} \otimes \widehat{V} \to \widehat{X}$ factors over it, and is itself monic by equation~\eqref{eq:day-useful} of Lemma~\ref{lem:broadtensor}. 

  By the same result, for the pullback property we must show each diagram
  \[
  \begin{pic}[xscale=4,yscale=1.5]
    \node (tl) at (0,1) {$\broad{U \otimes V \otimes W}{X}$};
    \node (tr) at (1,1) {$\broad{U \otimes W}{X}$};
    \node (bl) at (0,0) {$\broad{V \otimes W}{X}$};
    \node (br) at (1,0) {$\broad{W}{X}$};
    \draw[>->] (tl) to node[above]{} (tr);
    \draw[>->] (tl) to node[left]{} (bl);
    \draw[>->] (tr) to node[right]{} (br);
    \draw[>->] (bl) to node[below]{} (br);
    \draw (.1,.7) to (.15,.7) to (.15,.825);
  \end{pic}
  \]
 to be a pullback in $\Spsh{{\cat{C}}}$. For this it suffices to check that applying the diagram to each object $A$ yields a pullback in $\cat{Set}$, or equivalently that any morphism $f \colon A \to X$ factoring over $u \otimes w \otimes X$ and $v \otimes w' \otimes X$ for some $u \in U, v \in V$ and $w, w' \in W$ factors over $u' \otimes v' \otimes w'' \otimes X$ for some $u' \in U, v' \in V, w'' \in W$. But this follows easily from the pullbacks~\eqref{eq:stiff-pullback} taking $u'=u$, $v'=v$ and $w'' = w \wedge w'$, again for convenience assuming $W$ to be idempotent.
\end{proof}

The next lemma shows that $\Spsh{\cat{C}}$ formally adds to $\cat{C}$ the colimits of the diagrams $D(U,X)$ for all suitable $U \subseteq \ISub(\cat{C})$ and objects $X$.

\begin{lemma}\label{lem:morphismsofbroadpresheaves}
  Let $\cat{C}$ be firm, and let $U, V \subseteq \ISub(\cat{C})$ be idempotent. 
  Morphisms $\alpha \colon \broad{U}{X} \to \broad{V}{Y}$ of broad presheaves correspond to cocones $c_s \colon S \otimes X \to Y$ over $D(U, X)$ for which each $c_s$ restricts to some $t \in V$.
\end{lemma}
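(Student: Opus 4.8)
The plan is to set up mutually inverse assignments between morphisms $\alpha \colon \broad{U}{X} \to \broad{V}{Y}$ in $\Psh{\cat{C}}$ and cocones $(c_s)_{s\in U}$ over $D(U,X)$ with vertex $Y$ all of whose legs restrict to some $t\in V$, and then verify naturality, well-definedness, and that the two constructions are inverse.

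In one direction, given $\alpha$, put $c_s := \alpha_{S\otimes X}\!\big(\lambda_X\circ(s\otimes X)\big) \colon S\otimes X\to Y$ for $s\in U$. This makes sense because $\lambda_X\circ(s\otimes X)$ lies in $\broad{U}{X}(S\otimes X)$ (it visibly restricts to $s$), so $c_s\in\broad{V}{Y}(S\otimes X)$, i.e.\ $c_s$ restricts to some $t\in V$, as required. That $(c_s)$ forms a cocone over $D(U,X)$ is immediate from naturality of $\alpha$: if $g\colon S\otimes X\to T\otimes X$ is an arrow of $D(U,X)$, meaning $\lambda_X\circ(t\otimes X)\circ g=\lambda_X\circ(s\otimes X)$, then applying the naturality square of $\alpha$ at $g$ to the element $\lambda_X\circ(t\otimes X)$ yields $c_t\circ g = \alpha_{S\otimes X}(\lambda_X\circ(s\otimes X))=c_s$.

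In the other direction, given a cocone $(c_s)$ as above, define $\alpha_A\colon\broad{U}{X}(A)\to\broad{V}{Y}(A)$ by writing an element $f$ as $f=\lambda_X\circ(s\otimes X)\circ h$ for some $s\in U$ and $h\colon A\to S\otimes X$ and setting $\alpha_A(f):=c_s\circ h$; this restricts to the same $t\in V$ as $c_s$, so lands in $\broad{V}{Y}(A)$, and naturality in $A$ is then routine. The real work is well-definedness. Suppose $f=\lambda_X\circ(s\otimes X)\circ h=\lambda_X\circ(s'\otimes X)\circ h'$ with $s,s'\in U$; since $U$ is idempotent, $s\wedge s'\in U$, represented (Proposition~\ref{prop:semilattice}) with domain $S\otimes S'$, so the object $S\otimes S'\otimes X$ sits in $D(U,X)$. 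The pullback square~\eqref{eq:stiff-pullback} (with $t$ renamed $s'$) exhibits $S\otimes S'\otimes X$ as the pullback of $\lambda_X\circ(s\otimes X)$ and $\lambda_X\circ(s'\otimes X)$ along $X$, so the cone $(h,h')$ factors through a unique $h''\colon A\to S\otimes S'\otimes X$ whose composites with the two canonical maps $S\otimes S'\otimes X\to S\otimes X$ and $S\otimes S'\otimes X\to S'\otimes X$ (arrows of $D(U,X)$ from $s\wedge s'$ to $s$ and to $s'$) are $h$ and $h'$. The cocone relations then give $c_s\circ h=c_{s\wedge s'}\circ h''=c_{s'}\circ h'$, as needed; the case $s'=s$ handles independence of $h$ for a fixed $s$. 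Finally the two assignments are mutually inverse: rebuilding $\alpha$ from $(c_s)$, naturality of $\alpha$ at $h$ gives $\alpha_A(\lambda_X\circ(s\otimes X)\circ h)=\alpha_{S\otimes X}(\lambda_X\circ(s\otimes X))\circ h=c_s\circ h$; and starting from a cocone, taking $A=S\otimes X$ and $h=\id$ recovers each $c_s$.

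The main obstacle is precisely this well-definedness step, which rests on~\eqref{eq:stiff-pullback} being a pullback — that is, on stiffness rather than mere firmness — together with idempotence of $U$, which is what puts the vertex $S\otimes S'\otimes X$ into $D(U,X)$; since the ambient category in this section is anyway stiff, this is available, and if one wants the statement under the weaker "firm" hypothesis one must instead check directly that a morphism $A\to X$ restricting to both $s$ and $s'$ restricts to $s\wedge s'$ through the canonical map out of $S\otimes S'\otimes X$, which is essentially a specialisation of the pullback clause of Lemma~\ref{lem:broadtensor}. An alternative, shorter packaging of the whole argument is to first prove that $\broad{U}{X}$ is the colimit in $\Psh{\cat{C}}$ of the Yoneda image of $D(U,X)$, by realising $\broad{U}{X}$ as the union inside $\widehat{X}=\cat{C}(-,X)$ of the subpresheaves $\operatorname{im}\big(\cat{C}(-,S\otimes X)\to\cat{C}(-,X)\big)$, whose pairwise intersections are $\cat{C}(-,S\otimes T\otimes X)$ by stiffness; the claimed bijection is then just the universal property of that colimit composed with the Yoneda lemma, cocones with vertex $\broad{V}{Y}$ being exactly families of elements $c_s\in\broad{V}{Y}(S\otimes X)$.
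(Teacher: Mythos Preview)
Your proof is correct and follows essentially the same route as the paper's: extract $c_s$ by evaluating $\alpha$ at the canonical element $s\otimes X$, conversely set $\alpha_A(f)=c_s\circ h$ for a chosen factorisation of $f$, and establish well-definedness via the pullback~\eqref{eq:stiff-pullback} together with idempotence of $U$. Your remark that this step genuinely uses stiffness rather than mere firmness is accurate and matches what the paper's own proof invokes.
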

\begin{proof}
  Given $\alpha$ and $s \in U$, by naturality we may define such a cocone by $c_s = \alpha_{S \otimes X}(s \otimes X)$. Conversely, given a cocone as above define
  \[
    \alpha_A\big((s \otimes X) \circ g\big) = c_s \circ g
  \]
  for each $g \colon A \to S \otimes X$. This is clearly natural and is well-defined; indeed if $(s \otimes X) \circ g = (t \otimes X) \circ h$ then since~\eqref{eq:stiff-pullback} is a pullback this morphism factors as $(s \otimes t \otimes X) \circ k$ for some $k$, then with $c_s \circ g =c_{s \wedge t} \circ k = c_t \circ h$ since the $(c_s)$ form a cocone. Clearly these two assignments are inverses.
\end{proof}

Finally we can prove that our free constructions have the desired properties.

\begin{theorem}\label{thm:spatialcompletion}
  If $\cat{C}$ is a stiff category, then:
  \begin{itemize}
    \item $\FUsh{\cat{C}}$ has universal finite joins of subunits;
    \item $\DUsh{\cat{C}}$ has universal directed joins of subunits;
    \item $\Spsh{\cat{C}}$ is {locale-based}. 
  \end{itemize}
\end{theorem}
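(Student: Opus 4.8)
The plan is to verify, in each of the three categories, the three conditions of Theorem~\ref{thm:spatialcharacterisation}. First note that $\FUsh{\cat{C}}$, $\DUsh{\cat{C}}$ and $\Spsh{\cat{C}}$ are all stiff by Lemma~\ref{lem:broadpresheavesstiff}, and by Proposition~\ref{prop:broadidempotent} (and the corollary following it) their subunits are precisely the presheaves $\widehat{U}$ for, respectively, a finitely bounded, directed, or arbitrary downset $U \subseteq \ISub(\cat{C})$, with $\ISub(\Spsh{\cat C})$ being the free frame on $\ISub(\cat{C})$. So an idempotent family of subunits in one of these categories has the form $\mathcal{U} = \{\widehat{U} \mid U \in \mathcal{W}\}$ for a set $\mathcal{W}$ of downsets of $\ISub(\cat{C})$, and idempotence of $\mathcal{U}$ forces $\mathcal{W}$ to be closed under finite intersections, using that $U \otimes U' = U \cap U'$ for downsets. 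Put $W = \bigcup_{U \in \mathcal{W}} U$; a short check shows $W$ is again a downset of the appropriate kind in each case (a finite union of finitely bounded downsets is finitely bounded; a directed union of directed downsets is directed), so that $\widehat{W}$ is again a subunit. For $\Spsh{\cat{C}}$ the conclusion ``locale-based'' will follow from Corollary~\ref{cor:spatialiffboth} once universal finite and directed joins are established, so it suffices to verify Theorem~\ref{thm:spatialcharacterisation} uniformly, for every idempotent $\mathcal{U}$ of the relevant kind.

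\paragraph{The key step.}
The heart of the argument is to identify the diagram $D(\mathcal{U}, P)$ for a general object $P = \broad{V}{Y}$. By Lemma~\ref{lem:broadtensor} each vertex $\widehat{U} \ootimes P$ is identified, compatibly with its canonical monomorphism into $P = \widehat{I} \ootimes P$, with the subobject $\broad{U \otimes V}{Y}$ of $P$, where $U \otimes V = \{s \wedge t \mid s \in U, t \in V\}$; under this identification the maps of the diagram become the evident inclusions. I claim the colimit is $\broad{W \otimes V}{Y}$ with the inclusion cocone. Since colimits in $\Psh{\cat{C}}$ are computed pointwise, this reduces to showing that for each object $A$ the set $\broad{W \otimes V}{Y}(A)$ is the colimit of the $\cat{Set}$-valued diagram of the sets $\broad{U \otimes V}{Y}(A)$. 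But these are subsets of $\broad{V}{Y}(A)$ closed, by idempotence of $\mathcal{U}$, under the pairwise intersections occurring in the diagram, so their colimit is their union; and a morphism restricts to some element of $W \otimes V$ exactly when it restricts to an element of some $U \otimes V$, so that union equals $\broad{W \otimes V}{Y}(A)$. As the colimit object is broad, this is also a colimit in $\FUsh{\cat{C}}$, $\DUsh{\cat{C}}$ and $\Spsh{\cat{C}}$. This gives the first bullet of Theorem~\ref{thm:spatialcharacterisation}.

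\paragraph{The remaining conditions.}
With $P = \widehat{I}$ the morphism~\eqref{eq:sU} is the inclusion $\widehat{W} \rightarrowtail \widehat{I}$ of presheaves, which is monic. For general $P = \broad{V}{Y}$ the morphism~\eqref{eq:deltaUX} is a map $\colim D(\mathcal{U}, P) \to \widehat{W} \ootimes P$, which under the identifications above and Lemma~\ref{lem:broadtensor} is a map $\broad{W \otimes V}{Y} \to \broad{W \otimes V}{Y}$; the uniqueness clauses defining~\eqref{eq:deltaUX} and the isomorphism of Lemma~\ref{lem:broadtensor} show it is the identity, hence invertible. Theorem~\ref{thm:spatialcharacterisation} then yields universal finite joins in $\FUsh{\cat{C}}$, universal directed joins in $\DUsh{\cat{C}}$, and both in $\Spsh{\cat{C}}$, so $\Spsh{\cat{C}}$ is locale-based by Corollary~\ref{cor:spatialiffboth}.

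\paragraph{Main obstacle.}
The delicate point is the colimit computation: one must check that the pointwise colimit of the diagram of the sets $\broad{U \otimes V}{Y}(A)$ really is their union, which is exactly where idempotence of $\mathcal{U}$ is used, guaranteeing that the relevant binary intersections sit inside the diagram so that no coproduct-like identifications intervene; and one must track the isomorphisms of Lemma~\ref{lem:broadtensor} precisely enough to recognise~\eqref{eq:deltaUX} as an identity map. The bookkeeping that $W$ remains of the appropriate kind in the three cases, and that the canonical monomorphisms are respected by all these identifications, is routine. Essentially all the conceptual content is already packaged in Theorem~\ref{thm:spatialcharacterisation}, Lemma~\ref{lem:broadtensor} and Proposition~\ref{prop:broadidempotent}.
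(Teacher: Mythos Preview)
Your proposal is correct and the core argument---identifying the diagram $D(\mathcal{U},\broad{V}{Y})$ via Lemma~\ref{lem:broadtensor} with a diagram of inclusions $\broad{U \otimes V}{Y} \hookrightarrow \broad{V}{Y}$, and recognising its colimit as the union $\broad{(\bigcup\mathcal{U}) \otimes V}{Y}$ using idempotence of $\mathcal{U}$ together with the stiff pullbacks to handle overlaps---is exactly what the paper does.

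The only difference is packaging: the paper verifies Definition~\ref{def:spatial} directly, exhibiting $\broad{(\bigcup\mathcal{U}) \otimes V}{Y}$ as the colimit with the inclusion cocone and defining the mediating map $\beta$ explicitly, whereas you route through the three conditions of Theorem~\ref{thm:spatialcharacterisation} and then invoke Corollary~\ref{cor:spatialiffboth} for $\Spsh{\cat{C}}$. Your detour is slightly redundant---Theorem~\ref{thm:spatialcharacterisation} for arbitrary idempotent $\mathcal{U}$ already gives ``universal joins'', which is locale-based by the remark after Corollary~\ref{cor:spatialiffboth}, so the appeal to that corollary is unnecessary---but harmless. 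One point you could make more explicit: when showing the pointwise colimit is the union, you need that an $f$ lying in both $\broad{U \otimes V}{Y}(A)$ and $\broad{U' \otimes V}{Y}(A)$ actually lies in $\broad{(U \cap U') \otimes V}{Y}(A)$; this uses the stiff pullback~\eqref{eq:stiff-pullback} and downward closure of $V$, and is precisely the well-definedness check the paper performs for its map $\beta$.
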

\begin{proof}
  Consider the final statement first.
  Lemma~\ref{lem:broadpresheavesstiff} makes $\Spsh{\cat{C}}$ stiff.
  Let $\mathcal{U}$ be an idempotent family of subunits in $\Spsh{\cat{C}}$.
  By Proposition~\ref{prop:broadidempotent}, its elements are of the form $\widehat{U}$ for some $U \subseteq \ISub(\cat{C})$. 
  Also, its supremum in $\ISub(\Spsh{\cat{C}})$ is given by $\broad{\bigcup \mathcal{U}}{I}$ where we write $\bigcup \mathcal{U} = \bigcup \{ U \mid \widehat{U} \in \mathcal{U} \}$.

  Let $V \subseteq \ISub(\cat{C})$, and let $Y$ be an object in $\cat{C}$. 
  We have to prove that the inclusions $\widehat{U} \ootimes \broad{V}{Y} \to \bigcup \mathcal{U} \ootimes \broad{V}{Y}$ are a colimit of the diagram $D(\mathcal{U},\broad{V}{Y})$ in $\Spsh{\cat{C}}$.
  By Lemma~\ref{lem:broadtensor}, we may equivalently consider the inclusions
  \[
    \broad{U \otimes V}{Y} \hookrightarrow \broad{
 (\bigcup \mathcal{U}) \otimes V }{Y}\text.
  \]
  These certainly form a cocone. The questions is whether it is a universal one.
  Suppose that $\alpha_U \colon \broad{U \otimes V}{Y} \to \broad{W}{Z}$ is another cocone.
  Define a natural transformation $\beta \colon \broad{ (\bigcup \mathcal{U}) \otimes V}{Y} \to \broad{W}{Z}$ by $\beta_A(f) = (\alpha_U)_A(f)$ for any $f \colon A \to X$ that restricts to $U \in \mathcal{U}$. 

  Now $\beta$ is indeed well-defined, since if $f$ also restricts to $U' \in \mathcal{U}$ then by the pullback~\eqref{eq:stiff-pullback}, it also restricts to $U \cap U' \in \mathcal{U}$, so that $(\alpha_U)_A(f) = (\alpha_{U \cap U'})_A(f) = (\alpha_{U'})_A(f)$. By definition $\beta$ is the unique natural transformation making the following triangle commute:
  \[\begin{pic}[xscale=6,yscale=1.5]
    \node (tl) at (0,0.7) {$\broad{U\otimes V}{Y}$};
    \node (tr) at (0.7,0.7) {$\broad{(\bigcup \mathcal{U}) \otimes V}{Y}$};
    \node (br) at (0.7,0) {$\broad{W}{Z}$};
    \draw[>->] (tl) to (tr);
    \draw[->] (tr) to node[right]{$\beta$} (br);
    \draw[->] (tl) to node[below]{$\alpha_U$} (br);
  \end{pic}\]
  Hence the inclusions indeed form a colimit, and $\Spsh{\cat{C}}$ is {locale-based}.
  The proofs of the first two statements are identical, observing that if $U, V \subseteq \ISub(\cat{C})$ and $\mathcal{U} \subseteq \ISub(\FUsh{\cat{C}})$ or $\ISub(\DUsh{\cat{C}})$ are finitely bounded or directed, then so are $U \otimes V$ and $\bigcup \mathcal{U}$.
\end{proof}

We end this section by showing that the {locale-based} completion cannot be read in the traditional topological sense, in that broad presheaves are not sheaves for any Grothendieck topology.

\begin{proposition}
  There is a firm category $\cat{C}$ for which there is no Grothendieck topology $J$ with $\Spsh{\cat{C}} \simeq \mathrm{Sh}(\cat{C},J)$.
\end{proposition}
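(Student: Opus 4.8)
The plan is to produce the counterexample at the smallest possible scale, by exhibiting a firm $\cat{C}$ for which $\Spsh{\cat{C}}$ is not a Grothendieck topos and hence cannot be a category of sheaves for any topology. I would take $\cat{C}$ to be a nontrivial commutative monoid $M$ --- for concreteness $M = \mathbb{Z}/2$ --- viewed as a one-object symmetric monoidal category, with tensor the monoid operation. This category is firm, indeed stiff: the only monomorphism into $I$ is $\id[I]$, so $\ISub(\cat{C})$ is the one-element semilattice $\{[I]\}$ and the pullback squares~\eqref{eq:stiff-pullback} are squares of isomorphisms.

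Next I would compute $\Spsh{\cat{C}}$ explicitly. Since $\cat{C}$ has a single object $I$, and $\ISub(\cat{C})$ is a one-element set, every broad presheaf is isomorphic to $\broad{\emptyset}{I}$ or to $\broad{\{[I]\}}{I}$. The first is the constant empty presheaf, that is, the initial object of $\Psh{\cat{C}}$. The second is the representable presheaf $\widehat{I}$: every morphism into $I$ restricts to the unit subobject $\id[I]$, so $\broad{\{[I]\}}{I}(A) = \cat{C}(A,I)$, which for the one-object category $M$ is $M$ acting on itself by multiplication, a set of cardinality $|M| \geq 2$. These two presheaves are not isomorphic, so $\Spsh{\cat{C}}$ has exactly two objects up to isomorphism.

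The decisive observation is then that this two-object category has no terminal object. Indeed $\broad{\emptyset}{I}$ is initial but admits no morphism \emph{from} $\widehat{I}$, since by the Yoneda lemma $\Psh{\cat{C}}(\widehat{I}, \broad{\emptyset}{I}) = \broad{\emptyset}{I}(I) = \emptyset$, and $\widehat{I}$ fails to be terminal because $\Psh{\cat{C}}(\widehat{I}, \widehat{I}) = \cat{C}(I,I) = M$ has more than one element; fullness of $\Spsh{\cat{C}}$ in $\Psh{\cat{C}}$ means these hom-sets are computed in $\Spsh{\cat{C}}$ as well. Hence $\Spsh{\cat{C}}$ is not finitely complete, so it is not a Grothendieck topos. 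Since every sheaf category $\mathrm{Sh}(\cat{C},J)$ is a Grothendieck topos, in particular finitely complete, no Grothendieck topology $J$ can satisfy $\Spsh{\cat{C}} \simeq \mathrm{Sh}(\cat{C},J)$.

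I do not expect a genuine obstacle here; the only care needed is in verifying that $M$ really is a firm (in fact symmetric monoidal) category with trivial $\ISub$, and that the two presheaves above genuinely exhaust $\Spsh{\cat{C}}$ up to isomorphism --- both of which are immediate once one unwinds the definition of broad presheaf for a one-object category. One could alternatively run the same argument with a larger $\cat{C}$, or, in order to absorb the degenerate case $|M| = 1$ as well, detect the failure via a different topos axiom such as the existence of a subobject classifier; but the nontrivial-monoid version above is the most economical.
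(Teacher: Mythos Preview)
Your proposal is correct and follows essentially the same route as the paper: take $\cat{C}$ to be a nontrivial abelian group (or commutative monoid) regarded as a one-object monoidal category, observe that $\ISub(\cat{C})$ is trivial, and argue that $\Spsh{\cat{C}}$ lacks a terminal object and hence cannot be a Grothendieck topos. The only difference is presentational: you enumerate the two broad presheaves explicitly and check directly that neither is terminal, whereas the paper argues more abstractly that a terminal broad presheaf $\broad{U}{X}$ would force $X$ to be terminal in $\cat{C}$, which a nontrivial group does not have.
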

\begin{proof}
  Suppose that $\Spsh{\cat{C}}$ is a Grothendieck topos. 
  Then it is a reflective subcategory of $\Psh{\cat{C}}$~\cite[Proposition~3.5.4]{borceux:3}.
  Hence $\Spsh{\cat{C}}$ has a terminal object $\broad{U}{X}$ that, because right adjoints preserve limits, must equal the terminal object of $\Psh{\cat{C}}$.
  Therefore, for all objects $A$ of $\cat{C}$, the set $\broad{U}{X}(A)$ must be a singleton.
  This means that for all objects $A$, there is a unique morphism $A \to X$ that restricts to some $s \in U$.

  Suppose $\ISub(\cat{C})=\{I\}$. Since every morphism restricts to $I$, now $X$ must be a terminal object.
  But there exists a braided monoidal category $\cat{C}$ with only one subunit but no terminal object: any nontrivial abelian group.
\end{proof}

\begin{remark}
  In future it would be natural to consider the above completions with presheaves valued in a category other than $\cat{Set}$~\cite{borceuxquinteiro:sheaves}.
  After all, {Proposition}~\ref{ex:quantale} is enriched over complete lattices, {Proposition}~\ref{prop:modules} is enriched over abelian groups, and {Proposition}~\ref{prop:hilbertmodules} is enriched over normed vector spaces. 
  Proposition~\ref{prop:colimitspresheaves} holds for enriching categories $\cat{V}$ that are complete, cocomplete, locally small, and symmetric monoidal closed~\cite{imkelly:day}, covering all these examples.
  But an enriched version of Definition~\ref{def:broad} would require taking the subobject of $[A,X]$ in $\cat{V}$ that restricts to some $s \in U$. 
\end{remark}

\section{Universality of the completions}
\label{sec:completion}

Finally, let us prove that the {locale-based} completion $\Spsh{\cat{C}}$ and our other constructions $\FUsh{\cat{C}}$ and $\DUsh{\cat{C}}$ indeed have universal properties. 

\begin{definition}
  A \emph{morphism} of categories with universal (finite, directed) joins of subunits is a braided monoidal functor $F \colon \cat{C} \to \cat{D}$ that preserves subunits and their (finite, directed) suprema. For short we call morphisms of categories with universal joins of subunits simply morphisms of {locale-based} categories. 
\end{definition}

Here, a functor $F$ is monoidal when it comes equipped with coherent isomorphisms $\varphi_{A,B} \colon F(A) \otimes F(B) \to F(A \otimes B)$ and $\varphi \colon I \to F(I)$; these need to be invertible to make sense of preservation of subunits: if $s \in \ISub(\cat{C})$, then $\varphi^{-1} \circ F(s) \in \ISub(\cat{D})$. 

By Lemma~\ref{lem:cocone} and Theorem~\ref{thm:spatialcharacterisation}, a morphism is equivalently a braided monoidal functor $F \colon \cat{C} \to \cat{D}$ with $F\big(\colim D(U,X)\big) = \colim D\big(F(U),F(X)\big)$ for (finitely bounded, directed) idempotent $U \subseteq \ISub(\cat{C})$ and objects $X$ of $\cat{C}$.

\begin{definition}
  The \emph{{locale-based} completion} of a braided monoidal category $\cat{C}$ is a monoidal functor $y \colon \cat{C} \to \cat{D}$ that preserves subunits such that $\cat{D}$ is {locale-based}, and any monoidal functor $\cat{C} \to \cat{E}$ into a {locale-based} category that preserves subunits factors as $y$ followed by a morphism of {locale-based} categories $G$ that is unique up to a unique monoidal natural isomorphism $\gamma$ with $\gamma_y = \id[G]$.
  \[\begin{pic}[xscale=15,yscale=1.5]
      \node (tl) at (0,1) {$\cat{C}$};
      \node (tr) at (0.5,1) {$\cat{D}$};
      \node (bl) at (.5,0) {$\cat{E}$};
      \draw[->] (tl) to node[above]{\begin{tabular}{c}monoidal,\\preserves subunits\end{tabular}} (tr);
      \draw[->,dashed] (tr) to node[right]{{locale-based}} (bl);
      \draw[->] (tl) to node[below]{\begin{tabular}{c}monoidal,\\preserves subunits\end{tabular}} (bl);
  \end{pic}\]
  A \emph{completion under universal finite} or \emph{directed joins of subunits} of $\catC$ is defined similarly.
\end{definition}

\begin{theorem}\label{thm:spatialcompletionuniversal}
  If $\cat{C}$ is a stiff category, then via the Yoneda embedding its 
  \begin{itemize}
    \item completion under universal finite joins of subunits is $\FUsh{\cat{C}}$;
    \item completion under universal directed joins of subunits is $\DUsh{\cat{C}}$;
    \item {locale-based} completion is $\Spsh{\cat{C}}$.
  \end{itemize}
\end{theorem}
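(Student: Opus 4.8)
The plan is to prove the third (locale-based) statement in full and note that the finite and directed cases follow by the identical argument, restricting every family of subunits that appears to be finitely bounded, respectively directed, exactly as in the proof of Theorem~\ref{thm:spatialcompletion}. First I would dispatch the easy ingredients. The Yoneda embedding $y\colon\cat{C}\to\Spsh{\cat{C}}$, $X\mapsto\widehat{X}=\broad{\{1\}}{X}$, lands in broad presheaves, is full, faithful and braided strong monoidal for Day convolution (see \appref), and preserves subunits because it preserves monomorphisms, as already noted before Proposition~\ref{prop:colimitspresheaves}; and $\Spsh{\cat{C}}$ is locale-based by Theorem~\ref{thm:spatialcompletion}. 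The structural fact I would then isolate is that, by Lemma~\ref{lem:broadtensor}, every broad presheaf has the form $\widehat{U}\ootimes\widehat{X}=\broad{U}{X}$ with $U=\downset U\subseteq\ISub(\cat{C})$ idempotent, and that by Theorem~\ref{thm:spatialcompletion} applied to the idempotent family $\mathcal{U}=\{y(s)\mid s\in U\}$ (which has $\bigcup\mathcal{U}=U$, hence $\bigvee\mathcal{U}=\widehat{U}$) this presheaf is the colimit $\colim D\big(\{y(s)\mid s\in U\},\widehat{X}\big)$ of representables, with colimit cocone the inclusions $y(S\otimes X)\hookrightarrow\broad{U}{X}$.

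Now let $F\colon\cat{C}\to\cat{E}$ be monoidal and subunit-preserving with $\cat{E}$ locale-based; write $F(U)=\{\varphi^{-1}\circ F(s)\mid s\in U\}$ for the induced family of subunits of $\cat{E}$. Since $F$ is strong monoidal it preserves meets of subunits (by Proposition~\ref{prop:semilattice}), so $F(U)$ is idempotent when $U$ is; and since $\cat{E}$ is stiff, $F$ automatically preserves the monomorphisms $s\otimes X\rightarrowtail X$ and the pullbacks~\eqref{eq:stiff-pullback}, because those pullbacks in $\cat{E}$ are computed by stiffness from the subunits $\varphi^{-1}\circ F(s)$. Hence by the locale-based property of $\cat{E}$ the diagram $D(F(U),F(X))$ has colimit $(\bigvee F(U))\otimes F(X)$, and I would set $G(\broad{U}{X})=(\bigvee F(U))\otimes F(X)$ on a fixed presentation of each object of $\Spsh{\cat{C}}$. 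On morphisms I would use Lemma~\ref{lem:morphismsofbroadpresheaves}: a map $\broad{U}{X}\to\broad{V}{Y}$ is a cocone $(c_s)$ over $D(U,X)$ with each $c_s$ restricting into $V$; applying $F$ and composing with the coprojections into $(\bigvee F(V))\otimes F(Y)$ produces, via Lemma~\ref{lem:cocone}, a cocone over $D(F(U),F(X))$ landing in $G(\broad{V}{Y})$, whose mediating map is $G$ of the morphism. Functoriality follows from uniqueness of mediating maps; $G\circ y\simeq F$ since $G(\widehat{X})=(\bigvee F(\{1\}))\otimes F(X)=I\otimes F(X)\simeq F(X)$ naturally in $X$; and $G$ preserves subunits and their suprema because, by Proposition~\ref{prop:broadidempotent}, the subunits of $\Spsh{\cat{C}}$ are the $\widehat{U}$ and $G(\widehat{U})\simeq\bigvee F(U)$, with $\widehat{\bigcup_i U_i}\mapsto\bigvee_i\bigvee F(U_i)$.

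The technical heart of the argument — and the step I expect to be the main obstacle — is showing that $G$ is \emph{strong} monoidal, and hence a genuine morphism of locale-based categories. For the coherence isomorphism I would combine Lemma~\ref{lem:broadtensor} with the frame distributive law in $\ISub(\cat{E})$: since $F(U\otimes V)=\{F(s)\wedge F(t)\mid s\in U,t\in V\}$ one gets $\bigvee F(U\otimes V)=\big(\bigvee F(U)\big)\wedge\big(\bigvee F(V)\big)$, and for any subunits $a,b$ and object $Z$ of the stiff category $\cat{E}$ one has $(a\wedge b)\otimes Z\simeq a\otimes(b\otimes Z)$ by Proposition~\ref{prop:semilattice} and stiffness; chasing these two identities through $\broad{U}{X}\ootimes\broad{V}{Y}\simeq\broad{U\otimes V}{X\otimes Y}$ yields $G(\broad{U}{X}\ootimes\broad{V}{Y})\simeq G(\broad{U}{X})\otimes G(\broad{V}{Y})$, and one checks naturality and the monoidal coherence diagrams, the braiding being inherited from $F$ and $y$. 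The same two identities, applied to arbitrary families, show that $G\big(\colim D(\mathcal{V},Y)\big)=\colim D\big(G(\mathcal{V}),G(Y)\big)$ for every idempotent $\mathcal{V}\subseteq\ISub(\Spsh{\cat{C}})$ and object $Y$, so by the characterisation of morphisms of locale-based categories in terms of preservation of the colimits $D(U,X)$ (via Lemma~\ref{lem:cocone} and Theorem~\ref{thm:spatialcharacterisation}), $G$ is such a morphism.

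Finally, for uniqueness, suppose $G'\colon\Spsh{\cat{C}}\to\cat{E}$ is any morphism of locale-based categories with $G'\circ y\simeq F$. By Lemma~\ref{lem:cocone} and Theorem~\ref{thm:spatialcharacterisation}, $G'$ preserves the canonical colimit presentation of $\broad{U}{X}$, so $G'(\broad{U}{X})\simeq\colim D\big(\{F(s)\mid s\in U\},F(X)\big)=(\bigvee F(U))\otimes F(X)=G(\broad{U}{X})$, compatibly with the colimit cocones; since those cocones are identities at the representables (under $G'y\simeq F\simeq Gy$), this produces exactly one monoidal natural isomorphism $\gamma\colon G\Rightarrow G'$ with $\gamma_y=\id[G]$. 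This completes the plan; the completions under universal finite and directed joins are obtained verbatim, using only that ``finitely bounded'' and ``directed'' are preserved by $U\otimes V$ and by $\bigcup$.
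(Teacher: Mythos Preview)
Your proposal is correct and follows essentially the same approach as the paper: define the extension on objects by the forced formula $G(\broad{U}{X})=(\bigvee F(U))\otimes F(X)$, on morphisms via the cocone description of Lemma~\ref{lem:morphismsofbroadpresheaves} together with Lemma~\ref{lem:cocone}, verify strong monoidality using Lemma~\ref{lem:broadtensor} and the frame distributivity $\bigvee_{s,t}F(s)\wedge F(t)=(\bigvee F(U))\wedge(\bigvee F(V))$ in $\cat{E}$, then check preservation of subunits and suprema and uniqueness from the colimit presentation. The only minor difference is that you verify the hypotheses of Lemma~\ref{lem:cocone} for $F$ explicitly (via stiffness of $\cat{E}$) and phrase the ``morphism of locale-based categories'' condition in its equivalent colimit-preservation form, whereas the paper checks preservation of suprema directly; neither adds or removes content.
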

\begin{proof}
  We prove the {locale-based} case, the others being identical.
  For any monoidal functor $F \colon \cat{C} \to \cat{D}$ into a {locale-based} category, we need to show that there is a morphism $\overline{F} \colon \Spsh{\cat{C}} \to \cat{D}$ with $\overline{F} \circ y = F$, where $y$ is the Yoneda embedding. 

  Because $\broad{U}{X} = \broad{\downset U}{X}$ for any $U \subseteq \ISub(\cat{C})$, we may assume that $U$ is idempotent. 
  Because $F$ is monoidal, $F(U)$ is idempotent too.
  On objects, the requirement $\overline{F} \circ y = F$ forces us to define 
  \begin{align*}
    \overline{F}\broad{U}{X}
    &=\overline{F}\big(\colim D(y(U),\widehat{X})\big) \\
    &=\colim D\big(\overline{F} \circ y(U), \overline{F} \circ y(X)\big) \\
    &=\colim D\big(F(U), F(X)\big) \\
    &\simeq \big(\bigvee F(U)\big) \otimes F(X)\text.
  \end{align*}

  Now consider morphisms of (broad) presheaves. Any $\alpha \colon \broad{U}{X} \to \broad{V}{Y}$ induces a cocone $\alpha_s = \alpha_{S \otimes X}(s \otimes X) \colon S \otimes X \to Y$ over $D(U,X)$, where, as in Lemma~\ref{lem:morphismsofbroadpresheaves}, each such map factors through $t \otimes Y$ for some $t \in V$. 
  Hence $F(\alpha_s)$ factors through $F(t) \otimes F(Y)$ and hence $\colim D\big(F(V),F(Y)\big) = \overline{F}\broad{V}{Y}$, giving a morphism $\beta_s$ as below. 
  \[\begin{pic}[xscale=5,yscale=1.5]
    \node (tl) at (0,2) {$F(S) \otimes F(X)$};
    \node (tm) at (0.7,2) {$F(S \otimes X)$};
    \node (tr) at (1.5,2) {$F(Y)$};
    
    \node (bl) at (0,0) {$\overline{F}\broad{U}{X}$};
    \node (br) at (1.5,0) {$\overline{F}\broad{V}{Y}$};

    \node (ml) at (0,1) {$\overline{F}\broad{\{s\}}{X}$};
    \node (mr) at (1.5,1) {{$\overline{F}(\widehat{Y})$}};

    \draw[->] (tl) to node[above]{$\simeq$} (tm);
    \draw[->] (tm) to node[above]{$F(\alpha_s)$} (tr);
    \draw[->,dashed] (ml) to node[above]{$\beta_s$} (br);
    \draw[->] (tl) to node[left]{$\simeq$} (ml);
    \draw[>->] (br) to node[left]{} (mr);
    \draw[->,dashed] (bl) to node[below]{$\overline{F}(\alpha)$} (br);

    \draw[-, double distance=.75mm]  (mr) to (tr);
  
    \draw[>->] (ml) to node[above]{$\overline{F}\big(\alpha_s \circ (-)\big)$} (mr);
    \draw[->] (ml) to (bl);
    \draw[>->] (br) to (mr);
  \end{pic}\]
  By Lemma~\ref{lem:cocone}, the upper row forms a cocone over $D\big(F(U),F(X)\big)$ with $s$ ranging over $U$. 
  Because the vertical composite on the right is monic, the $\beta_s$ also form a cocone (after composition with the upper left vertical isomorphism). 
  But $\overline{F}\broad{U}{X}$ is a colimit, so there is a mediating map $\overline{F}(\alpha)$ making the diagram commute. 
  Uniqueness of this map makes $\overline{F}$ functorial. Given our definition of $\overline{F}$ on objects, this assignment $\overline{F}(\alpha)$ is unique with $\overline{F} \circ y = F$, since for each $s \in S$ the lower square commutes by functoriality, with the lower left vertical morphisms forming a colimit. 

  Next, $\overline{F}$ may readily be checked to be (strong) braided monoidal:
  \begin{align*}
    \overline{F}(\broad{U}{X} \ootimes \broad{V}{Y})
    &\simeq \overline{F}\broad{U \otimes V}{X \otimes Y} \\
    &\simeq \big(\bigvee_{s \in U, t \in V} F(s) \wedge F(t) \big) \otimes F(X) \otimes F(Y) \\
    &\simeq \bigvee F(U) \otimes \bigvee F(V) \otimes F(X) \otimes F(Y) \\
    &\simeq \overline{F}{\broad{U}{X}} \otimes \overline{F}{\broad{V}{Y}}
  \end{align*}
  By construction $\overline{F}$ preserves subunits because $\overline{F}\broad{U}{I} = \bigvee F(U)$, as well as their suprema:
  \[
    \overline{F}\big(\bigvee_{U \in \mathcal{U}} \broad{U}{I}\big)
    = \overline{F}\broad{\bigcup \mathcal{U}}{I}
    \simeq \bigvee_{U \in \mathcal{U}} \bigvee_{s \in U} F(s)
    \simeq \bigvee_{U \in \mathcal{U}} \overline{F}\broad{U}{I}
  \]
  Hence $\overline{F}$ is indeed a morphism of {locale-based} categories. 

  Finally, we must show for any other morphism $\overline{F}'$ with $\overline{F}' \circ y = F$ that there is a unique monoidal natural isomorphism $\gamma \colon \overline{F} \to \overline{F'}$ with $\gamma_y = \id[F]$. But this follows from the uniqueness of $\colim D\big(F(U),F(X)\big)$ up to unique isomorphism, and our statement above on the uniqueness of $\overline{F}(\alpha)$. 
\end{proof}

We leave open the question how these completions relate to the free cocompletions in a left exact context in the case of toposes~\cite{menni:thesis}.

Each construction is functorial; we consider the {locale-based} case in detail. Write {$\cat{LocBased}$} for the category of {locale-based} categories and their morphisms, and $\cat{Stiff}$ for the category of stiff categories and braided monoidal functors that preserve subunits. 

\begin{proposition}
  The map $\cat{C} \mapsto \Spsh{\cat{C}}$ defines a functor $\cat{Stiff} \to {\cat{LocBased}}$.
\end{proposition}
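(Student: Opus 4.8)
The plan is to construct $\Spsh{(-)}$ concretely on morphisms and then check functoriality and that it lands in $\cat{LocBased}$; the object part is already settled by Theorem~\ref{thm:spatialcompletion}. Given a morphism $F \colon \cat{C} \to \cat{D}$ of $\cat{Stiff}$, with structure isomorphisms $\varphi$, put $F(U) = \{ \varphi^{-1} \circ F(s) \mid s \in U \} \subseteq \ISub(\cat{D})$ for each $U \subseteq \ISub(\cat{C})$; this is a genuine set of subunits because $F$ preserves subunits and $\varphi$ is invertible. Define $\Spsh{F}$ on objects by $\broad{U}{X} \mapsto \broad{F(U)}{F(X)}$. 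Since $\broad{U}{X} = \broad{\downset U}{X}$ we may always assume $U$, and likewise $V$, idempotent; then by Lemma~\ref{lem:morphismsofbroadpresheaves} a morphism $\alpha \colon \broad{U}{X} \to \broad{V}{Y}$ is the same as a cocone $c_s \colon S \otimes X \to Y$ over $D(U,X)$ each leg of which restricts to some $t \in V$, and we define $\Spsh{F}(\alpha)$ to be the morphism corresponding under the same lemma to the cocone $\big( F(c_s) \big)$ over $D(F(U),F(X))$.

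The first thing to verify is that $\big(F(c_s)\big)$ really is such a cocone. Being strong braided monoidal and preserving subunits, $F$ sends each mono $s \otimes X \rightarrowtail X$ to, up to $\varphi$, the mono $F(s) \otimes F(X) \rightarrowtail F(X)$ of $\cat{D}$, and it sends the stiffness square~\eqref{eq:stiff-pullback} of $\cat{C}$ to the corresponding square of $\cat{D}$, which is again a pullback by stiffness of $\cat{D}$; so $F$ satisfies the hypotheses of Lemma~\ref{lem:cocone} and $\big(F(c_s)\big)$ is a cocone over $D(F(U),F(X))$. Since $F$ is strong monoidal, a morphism restricting to $s$ has $F$ of it restricting to $\varphi^{-1}\circ F(s)$, so each $F(c_s)$ restricts into $F(V)$, and Lemma~\ref{lem:morphismsofbroadpresheaves} indeed yields a morphism $\broad{F(U)}{F(X)} \to \broad{F(V)}{F(Y)}$. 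Comparing with the construction of $\overline{F}$ in the proof of Theorem~\ref{thm:spatialcompletionuniversal} applied to $y_{\cat{D}} \circ F$, whose value on $\broad{U}{X}$ is $\colim D\big(F(U),F(X)\big) \cong \widehat{F(U)} \ootimes \widehat{F(X)} \cong \broad{F(U)}{F(X)}$ by Lemma~\ref{lem:broadtensor} and Proposition~\ref{prop:broadidempotent}, one sees that (for suitable choices of these colimits) $\Spsh{F}$ agrees with that morphism; hence $\Spsh{F}$ is automatically braided monoidal and preserves subunits together with all their suprema, so is a morphism of {locale-based} categories. Alternatively, this can be checked directly: monoidality from Lemma~\ref{lem:broadtensor}, and preservation of arbitrary joins of subunits from the description $\bigvee_{U \in \mathcal{U}} \widehat{U} = \broad{\bigcup \mathcal{U}}{I}$ used in the proof of Theorem~\ref{thm:spatialcompletion} together with $F\big(\bigcup\mathcal{U}\big) = \bigcup\{F(U) \mid U \in \mathcal{U}\}$.

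It remains to check functoriality. The structure isomorphisms of $\id[\cat{C}]$ are identities, so $\id[\cat{C}](U) = U$, whence the construction sends $\id[\cat{C}]$ to the functor acting as the identity on each $\broad{U}{X}$ and, via Lemma~\ref{lem:morphismsofbroadpresheaves}, sending every cocone to itself, i.e.\ to the identity functor on $\Spsh{\cat{C}}$. For $F \colon \cat{C} \to \cat{D}$ and $G \colon \cat{D} \to \cat{E}$, the structure isomorphisms of $G \circ F$ are the evident composites of those of $F$ and $G$, which gives $(G \circ F)(U) = G\big(F(U)\big)$ as subsets of $\ISub(\cat{E})$; hence $\Spsh{G \circ F}$ and $\Spsh{G} \circ \Spsh{F}$ agree on objects, and on a morphism $\alpha$ the cocone $\big(G(F(c_s))\big)$ attached by $\Spsh{G\circ F}$ is exactly the image under $G$ of the cocone $\big(F(c_s)\big)$ attached by $\Spsh{F}$, so they agree on morphisms too. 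Therefore $\cat{C} \mapsto \Spsh{\cat{C}}$ is a functor $\cat{Stiff} \to \cat{LocBased}$.

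I expect the one genuinely delicate point to be the verification, needed to invoke Lemma~\ref{lem:cocone}, that $F$ preserves the stiffness pullbacks~\eqref{eq:stiff-pullback}: this fails for monoidal functors in general, but holds here because such a square is forced to be a pullback in any stiff category. The remaining care is purely bookkeeping of the coherence isomorphisms $\varphi$, which must be tracked in order to obtain strict, rather than merely pseudo-, functoriality.
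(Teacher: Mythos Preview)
Your proof is correct and follows essentially the same approach as the paper: define $\Spsh{F}$ on objects by $\broad{U}{X} \mapsto \broad{F(U)}{F(X)}$, translate morphisms to cocones via Lemma~\ref{lem:morphismsofbroadpresheaves}, push them through $F$, and appeal to Lemma~\ref{lem:cocone} for well-definedness. You are in fact more thorough than the paper, which does not explicitly verify that $\Spsh{F}$ lands in $\cat{LocBased}$ or spell out why $F$ satisfies the hypotheses of Lemma~\ref{lem:cocone}; your observation that the stiffness pullbacks are preserved because the image square is again a stiffness square in the stiff category $\cat{D}$ is exactly the right justification.
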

\begin{proof}
  For any $F \colon \cat{C} \to \cat{D}$ in $\cat{Stiff}$, define $\Spsh{\cat{C}} \to \Spsh{\cat{D}}$ on objects by $\broad{U}{X} \mapsto \broad{F(U)}{F(X)}$. We have seen that it suffices to consider when $U$ is idempotent. By Lemma~\ref{lem:morphismsofbroadpresheaves}, morphisms $\alpha \colon \broad{U}{X} \to \broad{V}{Y}$ are equivalently cocones over $D(U,X)$ each of whose legs factors over $t \otimes Y$ for some $t \in V$. Map such a cocone $c_s$ to the cocone $F(c_s)$ over $D(F(U),F(X))$. This is well-defined by Lemma~\ref{lem:cocone}, and clearly functorial.
\end{proof}

It follows from Theorem~\ref{thm:spatialcompletionuniversal} that the {locale-based} completion functor of the previous proposition is a left \emph{biadjoint} to the forgetful functor $\cat{{LocBased}} \to \cat{Stiff}$, when we make each category a strict 2-category with 2-cells being monoidal natural transformations (for this it suffices to check that each Yoneda embedding $\catC \to \Spsh{\catC}$ is a \emph{biuniversal arrow}~\cite[Theorem~9.16]{fiore2006pseudo}).

The other constructions $\cat{C} \mapsto \FUsh{\cat{C}}$ and $\cat{C} \mapsto \DUsh{\cat{C}}$ similarly give left biadjoints; write $\cat{UnivFin}$ or $\cat{UnivDir}$ for the category of categories with universal finite or directed joins.

\begin{theorem}
  The following cube of forgetful functors commutes, all functors in the top face have left biadjoints, and the rest have left adjoints. 
  \[
   \begin{pic}[xscale=5,yscale=2.5,cross line/.style={preaction={draw=white, -,line width=6pt}}]
    \node (slat) at (0,0) {$\cat{SemiLat}$};
    \node (pref) at (.5,.5) {$\cat{PreFrame}$};
    \node (fram) at (1.5,.5) {$\cat{Frame}$};
    \node (dist) at (1,0) {$\cat{DistrLat}$};
    \node (stif) at (0,1) {$\cat{Stiff}$};
    \node (udir) at (.5,1.5) {$\cat{UnivDir}$};
    \node (spat) at (1.5,1.5) {$\cat{{LocBased}}$};
    \node (ufin) at (1,1) {$\cat{UnivFin}$};
    \draw[->] (spat) to (ufin);
    \draw[->] (spat) to (udir);
    \draw[->] (udir) to (stif);
    \draw[->] (fram) to (pref);
    \draw[->] (fram) to (dist);
    \draw[->] (pref) to (slat);
    \draw[->] (stif) to (slat);
    \draw[->] (udir) to (pref);
    \draw[->] (dist) to (slat);
    \draw[->] (spat) to node[right]{$\ISub$} (fram);
    \draw[->, cross line] (ufin) to (dist);
    \draw[->, cross line] (ufin) to (stif);
   \end{pic}
  \]
\end{theorem}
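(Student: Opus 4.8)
The plan is to verify the three assertions in turn: that the cube commutes, that the four top-face functors have left biadjoints, and that the eight remaining functors have left adjoints.

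Commutativity I would dispose of first, as it is essentially bookkeeping: every downward arrow is $\ISub$ and every other arrow is a forgetful functor changing nothing on underlying data, so both composites around any face send a category to its poset of subunits carrying whatever order-theoretic structure the target demands, and send a structure-preserving functor to its restriction to subunits. The only thing to record is that $\ISub$ of a stiff (resp.\ universal-directed-join, universal-finite-join, {locale-based}) category is a semilattice (resp.\ preframe, distributive lattice, frame), which is Proposition~\ref{prop:semilattice}, Lemmas~\ref{lem:preframe} and~\ref{lem:lat}, and Definition~\ref{def:spatial}, and that these constructions are compatible with forgetting structure; on $2$-cells everything collapses to identities.

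For the eight left adjoints I would proceed as follows. The bottom face is the classical diagram of forgetful functors between semilattices, distributive lattices, preframes and frames, whose left adjoints are the free completions already recalled in the paper: finitely bounded downsets, directed downsets, all downsets~\cite{johnstone:stonespaces,vickers:topology}, together with the free frame on a distributive lattice (its ideal completion) and the free frame on a preframe. For each of the four $\ISub$ functors, the left adjoint sends a semilattice (resp.\ distributive lattice, preframe, frame) to the monoidal category on its underlying poset as in Example~\ref{ex:semilattice}; I would then check, by the poset computation that pullbacks and pushouts are meets and joins (with distributivity supplying the pushout square), that this category is stiff (resp.\ has universal finite joins, has universal directed joins, is {locale-based}) — cf.\ Example~\ref{ex:semilattice}, the example after Lemma~\ref{lem:preframe}, and Example~\ref{ex:frameisspatial} — that its subunits recover the original poset, and that the adjunction bijection holds strictly because a subunit is determined by its domain (the remark after Lemma~\ref{lem:subunitsorder}). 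Thus these are honest left adjoints despite the $2$-categorical nature of the top row, and the square they form commutes formally.

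For the top face, two edges come for free: $\FUsh{-}$ is left biadjoint to $\cat{UnivFin}\to\cat{Stiff}$ and $\DUsh{-}$ to $\cat{UnivDir}\to\cat{Stiff}$, by Theorem~\ref{thm:spatialcompletionuniversal} and the remark following it. For $\cat{LocBased}\to\cat{UnivFin}$ I would argue that the restriction of $\DUsh{-}$ does the job: if $\cat{C}$ has universal finite joins then $\ISub(\cat{C})$ is a distributive lattice, so by Proposition~\ref{prop:broadidempotent} the subunits of $\DUsh{\cat{C}}$ form its ideal completion, a frame; moreover $\DUsh{\cat{C}}$ has universal directed joins by Theorem~\ref{thm:spatialcompletion}, and the colimit argument there, specialised to finitely bounded families, shows it also inherits universal finite joins from $\cat{C}$, whence it is {locale-based} by Corollary~\ref{cor:spatialiffboth}; its universal property follows from that of $\cat{C}\to\DUsh{\cat{C}}$, since any morphism of {locale-based} categories out of $\cat{C}$ already preserves directed joins, and the induced functor then preserves finite joins automatically because in $\DUsh{\cat{C}}$ these arise from those of $\cat{C}$ via distributivity over directed joins. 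The fourth edge $\cat{LocBased}\to\cat{UnivDir}$ I would handle dually: its left biadjoint should freely adjoin the missing finite joins of subunits to a category that already has universal directed joins, either by adapting the broad-presheaf construction of Section~\ref{sec:broad} so that finite joins are added \emph{relative to} the directed joins already present in $\cat{C}$ (so that a directed join of the new finite joins computes correctly), or abstractly from the identity $\Spsh{-}\simeq(\text{this left biadjoint})\circ\DUsh{-}$; in either case one then verifies the outcome is stiff with a frame of subunits and universal joins of both kinds, with the universal property coming from the argument of Theorem~\ref{thm:spatialcompletionuniversal}. This last edge is the step I expect to be the main obstacle, being the only top-face functor whose left biadjoint is not already a completion constructed in Section~\ref{sec:broad}, and the delicate point is precisely to show that adjoining finite joins of subunits leaves the existing directed ones intact. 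Finally, commutativity of the square of left biadjoints, up to coherent monoidal isomorphism, is forced by uniqueness of the completions, exactly as in the final paragraph of the proof of Theorem~\ref{thm:spatialcompletionuniversal}.
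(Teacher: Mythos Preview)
Your treatment of commutativity, the bottom-face adjoints, and the vertical $\ISub$ adjoints is correct and matches the paper. Your handling of the two top-face edges into $\cat{Stiff}$ via $\FUsh{-}$ and $\DUsh{-}$ is also the paper's.

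Where you diverge is in the remaining two top-face edges. The paper does not build these piecewise from the already-constructed completions; instead it makes a single uniform observation: each of the four bottom-face left adjoints is given by taking a particular class of downward-closed subsets (finitely bounded downsets, directed downsets, all downsets, and---for the free frame on a preframe---Scott closed subsets, citing Banaschewski). Since the broad-presheaf machinery of Section~\ref{sec:broad} is parametrised precisely by a class of downsets of $\ISub(\cat{C})$, each of these four constructions categorifies directly, and the argument of Theorem~\ref{thm:spatialcompletionuniversal} goes through verbatim in each case. This gives all four top-face left biadjoints at once.

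In particular, the edge you flag as the main obstacle, $\cat{LocBased}\to\cat{UnivDir}$, is handled by categorifying the Scott-closed-subset construction: take broad presheaves $\broad{U}{X}$ where $U\subseteq\ISub(\cat{C})$ is downward closed and closed under the directed suprema that $\cat{C}$ already has. This is exactly your first suggestion (``adapt the broad-presheaf construction so that finite joins are added relative to the directed joins already present''), but you stop short of naming the relevant class of downsets, which is what makes the argument go through. Your alternative abstract factorisation $\Spsh{-}\simeq(\text{?})\circ\DUsh{-}$ is not obviously easier to justify than the direct construction.

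Your $\DUsh{-}$ proposal for $\cat{LocBased}\to\cat{UnivFin}$ is in fact the paper's construction in disguise, since the free frame on a distributive lattice is its ideal completion, i.e.\ its directed downsets. But the extra verification you sketch (that $\DUsh{\cat{C}}$ inherits universal finite joins from $\cat{C}$) is avoided by the paper's uniform viewpoint: one simply checks once that broad presheaves indexed by the appropriate class of downsets have the required universal joins, rather than arguing inheritance case by case.
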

\begin{proof}
  All functors in the bottom face have a left adjoint~\cite[Lemma~C1.1.3]{johnstone:elephant}. 
  Explicitly: the free frame on a preframe is given by taking its Scott closed subsets~\cite[Proposition~1]{banaschewski:freeframe},
  and we have already mentioned the free frame, preframe or distributive lattice on a semilattice.
  Observe that all these free constructions take certain types of downward closed subsets.
  Therefore they can be categorified from posets to categories that have universal joins of these types of subsets of subunits.
  The universal property of Theorem~\ref{thm:spatialcompletionuniversal} then holds in each case.
  Hence all functors in the top face of the cube have a left biadjoint.
  Finally, all vertical functors have a left adjoint as in {Proposition}~\ref{ex:semilattice}.
\end{proof}

\bibliographystyle{plain}
\bibliography{tensortopology}

\appendix
\section{Day convolution}\label{sec:day}

This appendix describes in some detail the monoidal structure on presheaf categories given by Day convolution~\cite{day:monoidalmonads}, so that it can prove some of the lemmas of Section~\ref{sec:completion}. We start with the abstract definition, then give a concrete description, and use that to write down the coherence isomorphisms; we have no need for associators or the braiding in this article, so will not discuss these explicitly. Fix a monoidal category $\cat{C}$, and write $\Psh{\cat{C}}=\psh{C}$ for the category of presheaves. 

\subsubsection*{Tensor product of objects}

The Day convolution $F \ootimes G$ of presheaves $F,G \in \widehat{\cat{C}}$ is given abstractly as a left Kan extension
\[
  F \ootimes G \simeq \mathrm{Lan}_{\otimes}(F\times G)
\]
of the functor $F\times G\colon (\cat{C}\times\cat{C})\op \to \cat{Set}$, given by $(A,B) \mapsto F(A) \times G(B)$ and $(f,g) \mapsto F(f) \times G(g)$, along the tensor product $\otimes \colon (\cat{C} \times \cat{C})\op \to \cat{C}\op$ of the base category.
This left Kan extension may be computed~\cite[X.4.1]{maclane:categorieswork} as a coend
\[
  (F \ootimes G)(A) = \int^{B,C} \cat{C}(A,B \otimes C) \times F(B) \times G(C)\text.
\]
Now, coends can be computed as colimits~\cite[IX.5.1]{maclane:categorieswork}, and in turn, colimits can be constructed from coproducts and coequalizers~\cite[V.2.2]{maclane:categorieswork}. Thus $F \ootimes G$ is a coequalizer of the following two functions.
\begin{align*}
  \coprod_{\substack{f \colon B \to B' \\ g \colon C \to C'}} \cat{C}(A,B\otimes C) \times F(B') \times G(C')
  & \rightrightarrows
  \coprod_{B,C} \cat{C}(A,B \otimes C) \times F(B) \times G(C) \\
  (h,\,x,\,y)_{(f,g)} & \mapsto \big( (f \otimes g) \circ h,\, x,\, y\big)_{(B',C')} \\
  (h,\,x,\,y)_{(f,g)} & \mapsto \big( h,\, F(f),\, G(g) \big)_{(B,C)}
\end{align*}
Finally, coproducts in $\cat{Set}$ are disjoint unions, and coequalizers are quotients. Thus
\[
  (F \ootimes G)(A) = \Big(\coprod_{B,C} \cat{C}(A,B\otimes C) \times F(B) \times G(C)\Big) \slash \smash{\sim}\text,
\]
where $\sim$ is the least equivalence relation satisfying
\[
  (h,x,y)_{(B,C)} \sim (h',x',y')_{(B',C')}
\]
when there exist $f \colon B \to B'$ and $g \colon C \to C'$ such that $x=F(f)(x')$, $y=G(g)(y')$ and $(f \otimes g) \circ h=h'$.
\[\begin{tikzpicture}[xscale=2]
  \node (l) at (0,2.5) {$B\otimes C$};
  \node (r) at (2,2.5) {$B'\otimes C'$};
  \node (t) at (1,3.5) {$A$};
  \draw[->] (t) to node[above]{$h\text{ }$}(l);
  \draw[->] (l) to node[below]{$f\otimes g$} (r);
  \draw[->] (t) to node[above]{$\text{ }h'$} (r);
\end{tikzpicture}\]
It also follows that the action of $F \ootimes G$ on a morphism $f\colon A'\rightarrow A$ is given by $(h, \,x, \,y)_{(B,C)} \mapsto (h\circ f,\, x,\, y)_{(B,C)}$.

\subsubsection*{Tensor product of morphisms}

If $\varphi \colon F \Rightarrow F'$ and $\psi \colon G \Rightarrow G'$ are natural transformations, then so is $\varphi \ootimes \psi \colon F \ootimes G \Rightarrow F' \ootimes G'$, given by
\[
  (\varphi \ootimes \psi)_A \colon (h,\,x,\,y)_{(B,C)} \mapsto (h,\, \varphi_B (x),\, \psi_C (y))_{(B,C)}\text.
\]

\subsubsection*{Tensor unit}

If $I$ is the tensor unit of $\cat{C}$, then $\widehat{I} = \cat{C}(-,I)$ is the tensor unit of $\Psh{\cat{C}}$.

\subsubsection*{Unitors}

Write $\rho_A \colon A \otimes I \to A$ and $\lambda_A \colon I \otimes A \to A$ for the unitors in $\cat{C}$. The right unitor $\widehat{\rho}_F \colon F \ootimes \widehat{I} \Rightarrow F$ is given by
\[
  (\widehat{\rho}_F)_A \colon 
  (h,\, x,\, y)_{(B,C)} \mapsto F\big(\rho_B \circ (\id[B]\otimes y)\circ h\big)(x)\text.
\]
and the left unitor $\widehat{\lambda}_F \colon \widehat{I} \ootimes F \Rightarrow F$ by
\[
  (\widehat{\lambda}_F)_A \colon 
  (h,\,x,\,y)_{(B,C)} \mapsto F\big(\lambda_C \circ (x\otimes \id[C])\circ h\big)(y)\text.
\]
It is straightforward to check that these are well-defined natural isomorphisms.

\subsubsection*{Subunits}

A subunit $S$ is firstly a subobject of $\widehat{I}$, i.e.~a subfunctor of $\cat{C}(-,I)$. Equivalently, to each object $A$ it assigns a set $S(A)$ of morphisms $A \to I$, and naturality amounts to these being closed under precomposition with arbitrary morphisms of $\cat{C}$, i.e.~whenever $s \in S(A)$ and $f\colon B \to A$ then $s \circ f \in S(B)$. Finally $S$ being a subunit means precisely that for all $s \in S(A)$ there exists a unique $(h,x,y)_{(B,C)} \in (S \ootimes S)(A)$, for some $h \colon A \to B \otimes C$, $x \in S(B)$, and $y \in S(C)$, with $s = \rho_I \circ (x \otimes y) \circ h$.

\subsubsection*{Proof of {Proposition}~\ref{prop:PshFirmCounter}} 

By the above description, subunits in $\Psh{M}$ correspond to ideals $S\subseteq M$ which are idempotent in the sense that $S=SS$, and furthermore satisfy the requirement that the map $S \ootimes S \to S$ is injective. 

Let $S$ be the ideal consisting of all elements of the form $(a,0) + x$ for some $a >0$, and $T$ the ideal of all elements of the form $(0,b) + y$ for $b > 0$, similarly. We claim that these are subunits. If $\Psh{M}$ were firm, then $S \ootimes T = S \cap T$ being a subunit and hence idempotent as an ideal. But $S \cap T$ is not idempotent. 

Indeed, consider $(0,1) \in S \cap T$. Now suppose that $(0,1) = (a,b) + (c,d)$ for some $(a,b),(c,d) \in S \cap T$. Then $a + b + c + d = 1$. If $a+c < 1$ necessarily $a=c=0$. Now $b>0$ or $d>0$, so either $b<1$ or $d<1$; without loss of generality say $b<1$. But this contradicts $(a,b) \in S$. Therefore $a+c=1$. But then $b=d=0$, contradicting $(a,b) \in T$. Thus $S \cap T$ is not idempotent.

It remains to verify that $S$ and $T$ are subunits. We first treat the case for $S$.
 Firstly, $S$ is idempotent since each element $(a,0)$ for $a > 0$ has $(a,0) = (a/2,0) + (a/2,0)$ with $(a/2,0) \in S$.
Finally, we must check that any $(h,s,t) \in S \ootimes S$ is determined by its value $hst \in M$.

Note that $(h,s,t)\sim (h + x + y,s',t')$ when $s=s'+ x$ and $t=t' + y$ for $h,x,y \in M$ and $s,s',t,t' \in S$. Hence any element $(h,s,t)$ is equivalent to one of the form $\big((b,c),(a,0),(a,0)\big)$ for arbitrarily small $a > 0$. Now suppose that 
\[
  (b,c) + (a,0) + (a,0) = (b',c') + (a',0) + (a',0)
\]
Using the same trick again we may assume that $a'=a$. Now if $b + a + a > 1$ there is some $d < a$, say with $a = d + e$, such that $b + d + d > 1$ also. Letting $(b',c')=(b,c)+(2d,0)$ gives $\big((b,c),(a,0),(a,0)\big)=\big((b',c'),(e,0),(e,0)\big)$, now with $b' + e + e < 1$. Applying this trick we may have assumed to begin with that $b + a + a < 1$ and $b' + a + a < 1$. But this ensures that $b=b'$ and $c=c'$, and we are done. Seeing that $T$ is a subunit is similar but simpler.
\qed

\subsubsection*{Proof of Lemma~\ref{lem:broadtensor}} 

As noted when proving Proposition~\ref{prop:broadidempotent}, we may assume that $U$ and $V$ are idempotent. The lower isomorphism of~\eqref{eq:day-useful} follows from monoidality of the Yoneda embedding. 

By definition, $\big(\broad{U}{X} \ootimes \broad{V}{Y}\big)(A)$ consists of triples $(h,f,g)$ where $h \colon A \to B \otimes C$, $f \colon B \to X$ restricts to $U$, and $g \colon C \to Y$ restricts to $V$, subject to the Day identification rules. From the definition of the monoidal structure in $\Psh{\cat{C}}$, the transformation $u \ootimes v$ in~\eqref{eq:day-useful} has component at $A$ given by
\[
  (h, f, g) \mapsto ( (f \otimes g) \circ h \colon A \to X \otimes Y )
\]
Since this is well-defined and each such morphism $(f \otimes g) \circ h$ clearly restricts to a member of $U \otimes V$, it restricts to a transformation as in the top row of~\eqref{eq:day-useful}, making the diagram commute. Furthermore each such map is surjective since any morphism $k \colon A \to X \otimes Y$ restricting to a member of $U \otimes V$ has, using the braiding, that $k = ((s \otimes X) \otimes (t \otimes Y)) \circ h $ for some $h$, $s \in U$ and $t \in V$ so that $(h, u \otimes X, v \otimes Y) \mapsto k$. 

Finally, we show injectivity. For any triple $(h,f,g)$ with $f = (s \otimes X) \circ \bar{f}$ and $g = (t \otimes Y) \circ \bar{g}$ for some $\bar{f}, \bar{g}, s \in U$ and $t \in V$,
\[
  (h,f,g) \sim ((\bar{f} \otimes \bar{g}) \circ h, s \otimes X, t \otimes Y)
\]
by the Day identification rules, and so it suffices to consider triples of this form. Now if $(h, s \otimes X, t \otimes Y)$ and $(h', s' \otimes X, t' \otimes Y)$ are mapped to the same morphism then it restricts to $s \wedge s' \in U$ and $t \wedge t' \in V$, so that for some $k$:
\begin{align*}
  (k, (s \wedge s') \otimes X, (t \wedge t') \otimes Y)
  & \sim (h, s \otimes X, t \otimes Y) \\ 
  & \sim (h', s' \otimes X, t' \otimes Y) 
\end{align*}
by definition of~$\sim$, making these triples equivalent as required.
\qed

\end{document}